\newtheorem{theorem}{Theorem}%[section]
\newtheorem{lemma}[theorem]{Lemma}
\newtheorem{corollary}[theorem]{Corollary}
\newtheorem{proposition}[theorem]{Proposition}
\theoremstyle{definition}
\newtheorem*{acknowledgements*}{Acknowledgements}
\theoremstyle{remark}
\newtheorem{remark}[theorem]{Remark}
\numberwithin{equation}{section}
\numberwithin{theorem}{section}
\newcommand{\R}{\mathbb R}
\newcommand{\N}{\mathbb N}
\newcommand{\C}{\mathbb C}
\title
[The finite Hilbert transform]
{Inversion and extension of the finite \\ 
Hilbert transform on $\mathbf{(-1,1)}$}
\author[G.P.  Curbera]{Guillermo P. Curbera}
\address{Facultad de Matem\'aticas \& IMUS,
Universidad de Sevilla, 
Calle Tarfia s/n,  Sevilla 41012, Spain}
\email{curbera@us.es}
\author[S. Okada]{Susumu Okada}
\address{School of Mathematics and Physics, University of Tasmania, 
Private Bag 37, Hobart, Tas. 7001, Australia}
\email{susbobby@grapevine.com.au}
\author[W.J. Ricker]{Werner J. Ricker}
\address{Math.--Geogr. Fakult\"at, Katholische Universit\"at
Eichst\"att--Ingolstadt, D--85072 Eichst\"att, Germany}
\email{werner.ricker@ku.de}
\thanks{The first author acknowledges the support  of 
MTM2015-65888-C4-1-P, MINECO (Spain).}
\thanks{The second author was supported by the 
``International Visiting Professor Program 2017'', via the Ministry
of Education, Science and Art, Bavaria (Germany).}
\date{\today}
\subjclass[2010]{Primary 44A15, 46E30; Secondary  47A53, 47B34.}
\keywords{Finite Hilbert transform, rearrangement invariant space, airfoil equation,
Fredholm operator.}
\begin{document}

\begin{abstract}
The principle of optimizing inequalities, or their equivalent 
operator theoretic formulation, is well established in analysis.
For an operator, this corresponds to extending its action 
to larger domains, hopefully to the largest possible 
such domain (i.e, its \textit{optimal domain}).
Some classical operators are already  optimally defined (e.g.,
the Hilbert transform in $L^p(\mathbb{R})$, $1<p<\infty$) and others are not
 (e.g., the Hausdorff-Young inequality in $L^p(\mathbb{T})$, $1<p<2$, or Sobolev's
inequality in various spaces). In this paper a detailed investigation
is undertaken of the finite Hilbert transform $T$ acting on rearrangement 
invariant spaces $X$ on  $(-1,1)$, an operator whose 
singular kernel is neither positive nor does it possess any monotonicity properties.
For a large class of such spaces $X$  it is shown that  
$T$ is already  optimally defined on $X$ 
(this is known for $L^p(-1,1)$ for all
$1<p<\infty$, except $p=2$).
The case $p=2$ is significantly different because the range of $T$ 
is a proper dense subspace of $L^2(-1,1)$. 
Nevertheless, by a completely different approach, 
it is established that $T$ is also  optimally defined on
$L^2(-1,1)$. Our methods are also used to show that the solution 
of the airfoil equation, 
which is well known for the spaces $L^p(-1,1)$ whenever $p\not=2$
(due to certain properties of $T$),
can also be extended to the class of r.i.\ spaces $X$ considered in this paper.
\end{abstract}

\maketitle

%%%%%%%%%%%%%%%%%%%%%%%%%%%%%%%%%%%%%%%%%%%%%

\section{Introduction}

%%%%%%%%%%%%%%%%%%%%%%%%%%%%%%%%%%%%%%%%%%%%%%

For $1\le p\le2$ the Fourier transform $F$ maps 
$L^p(\mathbb{T})$ into $\ell^{p'}(\mathbb{Z})$,
with $\frac{1}{p}+\frac{1}{p'}=1$. The Hausdorff-Young inequality $\|F(f)\|_{p'}\le \|f\|_p$
for $f\in L^p(\mathbb{T})$ ensures that $F$ is continuous. 
The following question was raised by
R.\ E.\ Edwards, \cite[p. 206]{edwards}, 50 years ago: \textit{Given $1\le p\le 2$, 
what can be said about the space $\mathbf{F}^p(\mathbb{T})$ consisting of those  
functions $f\in L^1(\mathbb{T})$ having the property that 
$F(f\chi_A)\in \ell^{p'}(\mathbb{Z})$ for all sets $A$ in 
the Borel $\sigma$-algebra $\mathcal{B}_{\mathbb{T}}$ on $\mathbb{T}$?}
A consideration of the  functional 
\begin{equation}\label{fourier}
f\mapsto\sup_{A\in\mathcal{B}_{\mathbb{T}}}\left\|F(\chi_Af)\right\|_{p'},
\end{equation}
would be expected to be relevant in this regard. 
For $p=2$, the operator 
$F\colon L^2(\mathbb{T})\to \ell^2(\mathbb{Z})$ 
is a Banach space \textit{isomorphism}, 
which implies that $\mathbf{F}^2(\mathbb{T})=L^2(\mathbb{T})$.
What about the case $1<p<2$?
It turns out that the  functional \eqref{fourier} is a norm, that 
$\mathbf{F}^p(\mathbb{T})\subseteq L^1(\mathbb{T})$ is a Banach function 
space (briefly, B.f.s.) 
\textit{properly} containing $L^p(\mathbb{T})$, and that
$F\colon \mathbf{F}^p(\mathbb{T})\to \ell^{p'}(\mathbb{Z})$
is continuous. 
Moreover, $\mathbf{F}^p(\mathbb{T})$
is the \textit{largest} such space in a certain sense. For the above
facts we refer to \cite{mockenhaupt-ricker}.
The point is that
the Hausdorff-Young inequality for functions in $L^p(\mathbb{T})$, $1<p<2$,
\textit{can} be extended to its
\textit{genuinely larger}  optimal domain space $\mathbf{F}^p(\mathbb{T})$.

For many classical inequalities in analysis, or their
equivalent operator theoretic formulation, an investigation along the lines of the
Hausdorff-Young inequality alluded to above can be quite fruitful.
One has a linear operator $S$ defined on some  B.f.s.\   $Z\subseteq L^0(\mu)$, 
with $(\Omega,\Sigma,\mu)$ a measure space, taking values in a Banach space $Y$ and
a B.f.s.\ $X\subseteq Z$ such that $S\colon X\to Y$ is bounded. 
The above question posed by Edwards is also meaningful in this setting: 
What can be said about the space $X_S$ consisting of those functions
$f\in Z$ satisfying $S(f\chi_A)\in Y$ for all $A\in\Sigma$?
In particular, is $X_S$ genuinely  larger than $X$? If so,
can $X_S$ be equipped with a function norm such that
$X\subseteq X_S$ continuously and $S$ has a
$Y$-valued, continuous linear extension to $X_S$?
And, of course, $X_S$ should be the \textit{largest}
space with these properties. A few examples will illuminate this discussion.

Let $\Omega\subset\mathbb{R}^n$ be a bounded domain with $|\Omega|=1$.  
The validity of the generalized Sobolev inequality $\|u^*\|_Y\le C \||\nabla u|^*\|_X$
for $u\in C_0^1(\Omega)$, where $v^*$ is the decreasing rearrangement
of a function $v$ and $X,Y$ are rearrangement invariant 
(briefly, r.i.) spaces on $[0,1]$, is equivalent to the 
boundedness of the inclusion operator $j\colon W_0^1X(\Omega)\to Y(\Omega)$
for a suitable Sobolev space $W_0^1X(\Omega)$. By using a 
generalized Poincar\'e inequality, Cwikel and Pustylnik, \cite{cwikel-pustylnik}, and
Edmunds, 
Kerman and Pick, \cite{edmunds-kerman-pick}, showed that the boundedness of $j$ 
is equivalent to the boundedness, from $X$ into $Y$, of the 1-dimensional
operator $S$ associated with Sobolev's inequality, namely,
$$
(S(f))(t):=\int_t^1f(s)s^{(1/n)-1}ds,\quad t\in[0,1],
$$
which is generated by the kernel $K(t,s):=s^{(1/n)-1}\chi_{[t,1]}$ on $[0,1]\times[0,1]$.
Accordingly, being able to extend the operator $S$ is equivalent to extending the
imbedding $j$ and hence, to refining the generalized Sobolev inequality.
The  optimal extension of this kernel operator $S$ is treated in
\cite{curbera-ricker-sm};
whether or not the initial space becomes genuinely larger 
depends on properties of $X$ and $Y$. A knowledge of the
optimal domain of $S$ has implications for the compactness of the Sobolev 
imbedding $j$, \cite{curbera-ricker-tams}, \cite{curbera-ricker-ind}.

For $0<\alpha<1$, the classical fractional integral operator  in the spaces $L^p(0,1)$, $1\le p\le \infty$,
has kernel (up to a constant) given by $K(t,s)=|s-t|^{\alpha-1}$. 
Its  optimal extension  has been investigated in 
\cite{curbera-ricker-nach}. For convolution
(and more general Fourier multipliers) operators  in $L^p(G)$, $1\le p<\infty$, with $G$ a 
compact abelian group, see \cite{mockenhaupt-okada-ricker}, 
\cite[Ch.7]{okada-ricker-sanchez} and the references therein.
The  optimal extension of the classical Hardy operator 
in $L^p(\mathbb{R})$, $1<p<\infty$, with kernel $K(t,s):=(1/t)\chi_{[0,t]}(s)$
has been investigated in \cite{delgado-soria}.

In this paper we consider another classical singular 
integral operator. The Hilbert transform 
$H\colon L^p(\R)\to L^p(\R)$, for $1<p<\infty$
(whose boundedness is due to M.\ Riesz), is defined via 
convolution as a principal value integral; see, for example, \cite[\S6.7]{edwards-gaudry}. 
Since $H^2=-I$, the operator $H$ is a Banach space \textit{isomorphism} 
on $L^p(\R)$ for every $1<p<\infty$ and so there is \textit{no} larger B.f.s.\ 
which contains $L^p(\R)$ and such that $H$ 
has an $L^p(\R)$-valued extension to this space. 
A related operator is the 
Hilbert transform $H_{2\pi}$ of $2\pi$-periodic functions 
defined via the principal value integrals
\begin{equation*}\label{FHT-2pi}
(H_{2\pi}(f))(x)=p.v. \frac{1}{2\pi}\int_{-\pi}^{\pi}f(x-u)\cot(u/2)\,du
\end{equation*}
for every measurable $2\pi$-periodic function $f$ and for every point $x\in[-\pi,\pi]$ for 
which the p.v.-integral exists.
For each $1<p<\infty$, the operator $H_{2\pi}$ is  linear 
and continuous from $L^p(-\pi,\pi)$ into itself; denote this operator by
$H^p_{2\pi}$. It is known that $H^p_{2\pi}$ 
has proper closed range,  \cite[Sect. 9.1]{butzer-nessel}.
Hence, $H^p_{2\pi}$ is surely not an isomorphism on 
 $L^p(-\pi,\pi)$. Nevertheless,  
as for $H$, it turns out that there is \textit{no} genuinely larger B.f.s.\ 
containing $L^p(-\pi,\pi)$ 
such that $H^p_{2\pi}$ has an $L^p(-\pi,\pi)$-valued extension to this space, 
\cite[Example 4.20]{okada-ricker-sanchez}.

The finite Hilbert transform $T(f)$ 
of $f\in L^1(-1,1)$ is the principal value integral
\begin{equation*}
(T(f))(t)=\lim_{\varepsilon\to0^+} \frac{1}{\pi}
\left(\int_{-1}^{t-\varepsilon}+\int_{t+\varepsilon}^1\right) \frac{f(x)}{x-t}\,dx ,
\end{equation*}
which exists for a.e.\ $t\in(-1,1)$ and is a measurable function. 
It  is  known to have important applications to aerodynamics, 
via the resolution of the so-called  \textit{airfoil  equation}, 
\cite{cheng-rott}, \cite[Ch.11]{king}, \cite{reissner},  \cite{tricomi-1},
\cite{tricomi}. More recently, the finite Hilbert transform has also found applications 
to problems arising in image reconstruction; 
see, for example, \cite{katsevich-tovbis}, \cite{sidky-etal}.
For each $1<p<\infty$
the linear operator $f\mapsto T(f)$ maps $L^p(-1,1)$ continuously  into 
itself (denote this operator by $T_p$).
Except when $p=2$, the operator $T_p$ behaves similarly, in some sense, to  $H^p_{2\pi}$. 
Consequently,  there is no larger B.f.s.\   containing
$L^p(-1,1)$ such that $T_p$ has an $L^p(-1,1)$-valued extension to this space,
\cite[Example 4.21]{okada-ricker-sanchez}. 
However, for $p=2$ the situation is significantly different,
as already pointed out long ago in \cite[p.44]{sohngen}.
One of the reasons is that the range of $T_2$ is a proper dense subspace of $L^2(-1,1)$.
The  arguments used for $T_p$ in the cases $1<p<2$ and $2<p<\infty$ 
do not apply to $T_2$. Moreover, they fail to indicate whether or not 
$T_2$ has an $L^2(-1,1)$-valued extension to a B.f.s.\ genuinely larger than $L^2(-1,1)$.
The atypical behavior of $T$ when $p=2$ has also been observed
in \cite{astala-etal}, where $T$ is considered to be acting in weighted $L^p$-spaces.
Accordingly, the case $p=2$ requires  different arguments.

In this paper we consider the inversion and the extension of the finite 
Hilbert transform $T$  on function spaces on $(-1,1)$. 
In Section \ref{S3} we extend known properties
of $T$ when it acts on the spaces $L^p(-1,1)$, for $p\not=2$, to a larger class of r.i.\ spaces $X$ on  
$(-1,1)$ satisfying certain restrictions on their Boyd indices, more precisely,
that $0<\underline{\alpha}_X\le\overline{\alpha}_X<1/2$ or
$1/2<\underline{\alpha}_X\le\overline{\alpha}_X<1$; 
see Theorems \ref{theo-3} and \ref{theo-4}.
In particular, it is established that $T$ is a Fredholm operator
in such r.i.\ spaces.
This allows a refinement of the solution of the airfoil equation by extending it to 
such  r.i.\ spaces; see Corollary \ref{cor-airfoil}.
In Section \ref{S4} we apply the results of the previous section to 
prove (cf. Theorem \ref{theo-10})
the impossibility of extending the finite 
Hilbert transform when it acts on r.i.\ spaces $X$ satisfying 
$0<\underline{\alpha}_X\le\overline{\alpha}_X<1/2$ or
$1/2<\underline{\alpha}_X\le\overline{\alpha}_X<1$. 
The proof relies on a deep result of Talagrand concerning $L^0$-valued measures.
In the course of that investigation we establish a rather unexpected 
characterization of when a function $f\in L^1(-1,1)$  belongs to
$X$ in terms of the set of $T$-transforms $\{T(f\chi_A): A\; \mathrm{measurable}\}$; see
Proposition \ref{cor-8}. In the final Section \ref{S5} we address the case $p=2$.
It is established (cf. Theorem \ref{theo-14}),
via a completely different approach, that $T\colon L^2(-1,1)\to L^2(-1,1)$
does not have a continuous $L^2(-1,1)$-valued extension to any larger B.f.s. 
  The argument relies on showing that the norm
\begin{equation*}
f\mapsto \sup_{|\theta|=1}\left\|T(\theta f)\right\|_2
\end{equation*}
(equivalent to \eqref{fourier} in the appropriate setting) 
is equivalent to the usual norm in $L^2(-1,1)$. We conclude
Section \ref{S5} by extending the above mentioned characterization to show that
$f\in L^2(-1,1)$ if and only if $T(f\chi_A)\in L^2(-1,1)$ for every measurable set 
$A\subseteq(-1,1)$; see Corollary \ref{cor-15}.

Not  all r.i.\ spaces $X$ which $T$ maps 
into itself (i.e., satisfying $0<\underline{\alpha}_X\le\overline{\alpha}_X<1$) 
are covered. Except when $X=L^2(-1,1)$, for those r.i.\ spaces $X$
not satisfying the conditions $0<\underline{\alpha}_X\le\overline{\alpha}_X<1/2$ or
$1/2<\underline{\alpha}_X\le\overline{\alpha}_X<1$ (e.g., the 
Lorentz spaces $L^{2,q}$ for $1\le q\le\infty$ with $q\not=2$) 
the techniques used here do not apply; see Remark \ref{rem-final}.

%%%%%%%%%%%%%%%%%%%%%%%%%%%%%%%

\section{Preliminaries}
\label{S2}

%%%%%%%%%%%%%%%%%%%%%%%%%%%%%%%

In this paper the relevant measure space  is $(-1,1)$ 
equipped with its Borel $\sigma$-algebra $\mathcal{B}$ and  Lebesgue measure $|\cdot|$
(restricted to $\mathcal{B}$).
We  denote by $\text{sim }\mathcal{B}$ the vector space of 
all $\mathbb{C}$-valued, $\mathcal{B}$-simple functions and by
$L^0(-1,1)=L^0$ the space (of equivalence classes) of all $\mathbb{C}$-valued
measurable functions, endowed with the topology of convergence in measure.
The space $L^p(-1,1)$ is denoted simply by $L^p$, for $1\le p\le\infty$.

A \textit{Banach function space} (B.f.s.) $X$ on  $(-1,1)$ is a
Banach space  $X\subseteq L^0$ satisfying
the ideal property, that is, $g\in X$ and $\|g\|_X\le\|f\|_X$
whenever $f\in X$ and $|g|\le|f|$ a.e.  
The \textit{associate space} $X'$  of $X$ consists  of all
functions $g$ satisfying $\int_{-1}^1|fg|<\infty$, for every
$f\in X$, equipped with the norm
$\|g\|_{X'}:=\sup\{|\int_{-1}^1fg|:\|f\|_X\le1\}$. 
The space $X'$ is a closed subspace of the Banach space dual $X^*$ of $X$. 
The second  associate space $X''$ 
of $X$ is defined as $X''=(X')'$. The norm in $X$ is absolutely continuous if,
for every $f\in X$, we have $\|f\chi_A\|_X\to0$ whenever $|A|\to0$.
The space $X$ satisfies the Fatou property  if, whenever  $\{f_n\}_{n=1}^\infty\subseteq X$ satisfies
$0\le f_n\le f_{n+1}\uparrow f$ a.e.\ with $\sup_n\|f_n\|_X<\infty$,
then $f\in X$ and $\|f_n\|_X\to\|f\|_X$.

A \textit{rearrangement invariant} (r.i.) space $X$ on $(-1,1)$ is a
B.f.s.\  
such that if $g^*\le f^*$ with $f\in X$,  
then $g\in X$ and $\|g\|_X\le\|f\|_X$.
Here $f^*\colon[0,2]\to[0,\infty]$ is 
the decreasing rearrangement of $f$, that is, the
right continuous inverse of its distribution function:
$\lambda\mapsto|\{t\in (-1,1):\,|f(t)|>\lambda\}|$.
The associate space $X'$ of a r.i.\ space $X$ is again a r.i.\ space.
Every r.i.\ space on $(-1,1)$ satisfies 
$L^\infty\subseteq X\subseteq L^1$, 
\cite[Corollary II.6.7]{bennett-sharpley}. 
Moreover, if $f\in X$ and $g\in X'$, then $fg\in L^1$ and
$\|fg\|_{L^1}\le \|f\|_X \|g\|_{X'}$, i.e., H\"older's inequality
is available. 
The fundamental function of $X$ is defined by 
$\varphi_X(t):=\|\chi_{A}\|_X$ for $A\in\mathcal{B}$ 
with $|A|=t$, for $t\in[0,2]$.

In this paper \textit{all} B.f.s.' $X$ (hence, all r.i.\ spaces) are on
$(-1,1)$ relative to Lebesgue measure and, as in 
\cite{bennett-sharpley},    
satisfy the Fatou property. In this case $X''=X$ and hence,
$f\in X$ if and only if $\int_{-1}^1|fg|<\infty$, for every $g\in X'$.
Moreover, $X'$ is a norm-fundamental subspace of $X^*$, that is, 
$\|f\|_X=\sup_{\|g\|_{X'}\le1} |\int_{-1}^1fg|$ for $f\in X$, \cite[pp.12-13]{bennett-sharpley}.
If $X$ is separable, then $X'=X^*$.

The family of r.i.\ spaces includes many classical spaces 
appearing in analysis, such as  the Lorentz $L^{p,q}$ spaces, 
\cite[Definition IV.4.1]{bennett-sharpley}, Orlicz $L^\varphi$ spaces 
\cite[\S4.8]{bennett-sharpley}, Marcinkiewicz $M_\varphi$ spaces, 
\cite[Definition II.5.7]{bennett-sharpley}, Lorentz $\Lambda_\varphi$ spaces,
\cite[Definition II.5.12]{bennett-sharpley},
and the Zygmund $L^p(\text{log L})^\alpha$ spaces, 
\cite[Definition IV.6.11]{bennett-sharpley}. In particular, 
   $L^p=L^{p,p}$, for $1\le p\le \infty$.  
The space weak-$L^1$, denoted by $L^{1,\infty}(-1,1)=L^{1,\infty}$, will play
an important role; it is not a Banach space,
\cite[Definition IV.4.1]{bennett-sharpley}. It satisfies
$L^1\subseteq L^{1,\infty} \subseteq L^0$, with all
inclusions continuous.

The dilation operator $E_t$ for $t>0$ is defined, for 
each $f\in X$, by $E_t(f)(s):=f(st)$ for $-1\le st\le1$ and zero in 
other cases. The operator $E_t\colon X\to X$  is bounded 
with $\|E_t\|_{X\to X}\le \max\{t,1\}$. The \textit{lower} and \textit{upper 
Boyd indices} of  $X$ are defined, respectively, by
\begin{equation*}
\underline{\alpha}_X\,:=\,\sup_{0<t<1}\frac{\log \|E_{1/t}\|_{X\to X}}{\log t}
\;\;\mbox{and}\;\;
\overline{\alpha}_X\,:=\,\inf_{1<t<\infty}\frac{\log \|E_{1/t}\|_{X\to X}}{\log t} ,
\end{equation*}
\cite[Definition III.5.12]{bennett-sharpley}. 
They satisfy $0\le\underline{\alpha}_X\le \overline{\alpha}_X\le1$.
Note that $\underline{\alpha}_{L^p}= \overline{\alpha}_{L^p}=1/p$.

We recall a  technical fact from 
the theory of r.i.\ spaces that will be often used; 
see, for example, \cite[Proposition 2.b.3]{lindenstrauss-tzafriri}.

\begin{lemma}\label{lemma-1}
Let $X$ be a r.i.\ space  such that 
$0<\alpha<\underline{\alpha}_X\le \overline{\alpha}_X<\beta<1$.
Then there exist $p,q$ satisfying $1/\beta<p<q<1/\alpha$ such that
$L^q\subseteq X \subseteq L^p$ with
continuous  inclusions.
\end{lemma}

An important role will be played by the Marcinkiwiecz space 
$L^{2,\infty}(-1,1)=L^{2,\infty}$, also known as  weak-$L^2$, \cite[Definition IV.4.1]{bennett-sharpley}. 
It consists of those $f\in L^0$  satisfying
\begin{equation}\label{L2oo}
f^*(t)\le \frac{M}{t^{1/2}},\quad 0<t\le2,
\end{equation}
for some constant $M>0$. Consider the function $1/\sqrt{1-x^2}$ on $(-1,1)$. 
Since its decreasing rearrangement  $(1/\sqrt{1-x^2})^*$
is the function $t\mapsto 2/t^{1/2}$,  it follows that $1/\sqrt{1-x^2}$ belongs to
$L^{2,\infty}$. Actually, for any r.i.\ space $X$  it is the case that 
$1/\sqrt{1-x^2}\in X$ if and only if $L^{2,\infty}\subseteq X$. 
Consequently, $L^{2,\infty}$ is the \textit{smallest} r.i.\ space  
which contains  $1/\sqrt{1-x^2}$. Note that  
$\underline{\alpha}_{L^{2,\infty}}= \overline{\alpha}_{L^{2,\infty}}=1/2$.

For all of the above and further facts on r.i.\  spaces see \cite{bennett-sharpley}, 
\cite{lindenstrauss-tzafriri}, for example.

%%%%%%%%%%%%%%%%%%%%%%%%%%%%%%%

\section{Inversion of the finite Hilbert transform on r.i.\ spaces}
\label{S3}

%%%%%%%%%%%%%%%%%%%%%%%%%%%%%%%%

In  \cite[Ch.11]{king}, \cite{okada-elliot}, \cite[\S4.3]{tricomi} 
a detailed study of the inversion of the finite Hilbert 
transform was undertaken for $T$ acting on the spaces $L^p$ whenever 
$1<p<2$ and $2<p<\infty$. 
We study here the extension of those results to  a larger class of spaces, 
namely,  the r.i.\ spaces. The restrictions on $p$ indicated above for 
the $L^p$ spaces can  be formulated  for r.i.\ spaces in terms 
of their Boyd indices, namely, $0<\underline{\alpha}_X\le\overline{\alpha}_X<1/2$
and $1/2<\underline{\alpha}_X\le\overline{\alpha}_X<1$.

A result of Boyd, \cite[Theorem III.5.18]{bennett-sharpley}, 
allows the extension of  Riesz's classical theorem 
on the boundedness of the Hilbert 
transform  $H$ on the  spaces $L^p(\mathbb{R})$, for $1<p<\infty$, to 
a certain class of r.i.\ spaces. Indeed,
since $Tf=\chi_{(-1,1)}H(f\chi_{(-1,1)})$, it follows for 
a r.i.\ space $X$  with non-trivial lower and upper Boyd indices, that is, 
$0<\underline{\alpha}_X\le \overline{\alpha}_X<1$, that
$T\colon X\to X$ boundedly; this is indicated by simply writing $T_X$. 
Since $\underline{\alpha}_{X'}=1-\overline{\alpha}_X$
and $\overline{\alpha}_{X'}=1-\underline{\alpha}_X$, the condition 
$0<\underline{\alpha}_X\le \overline{\alpha}_X<1$ implies that 
$0<\underline{\alpha}_{X'} \le \overline{\alpha}_{X'}<1$. Hence,
 $T_{X'}\colon X'\to X'$
is also bounded. The operator $T$ is not continuous on $L^1$. However, 
due to a result of Kolmogorov, 
\cite[Theorem III.4.9(b)]{bennett-sharpley}, $T\colon L^1\to L^{1,\infty}$ is continuous. 
It follows  from
the Parseval formula in Proposition \ref{prop-2}(b) 
below that the restriction of the dual operator 
$T_X^*\colon X^*\to X^*$ of $T_X$ to the closed subspace $X'$ of $X^*$ 
is precisely $-T_{X'}\colon X'\to X'$.

%%%%%%%%%%%%%%%%%%%%%%%%%%%%%%%%%%%%%%%%%%%%%%

In the study of the operator $T$ an important role is played by the particular
function $1/\sqrt{1-x^2}$, which belongs to each $L^p$, $1\le p<2$. 
The reason is that 
\begin{equation}\label{0}
T\Big(\frac{1}{\sqrt{1-x^2}}\Big)(t)=
p.v. \frac{1}{\pi}\int_{-1}^{1}\frac{1}{\sqrt{1-x^2} (x-t)}\,dx=0,\quad -1< t< 1,
\end{equation}
and, moreover, that if $T(f)(t)=0$ for a.e.\ $t\in(-1,1)$ with 
$f$ a function belonging to some space 
$L^p$, $1<p<\infty$, then necessarily $f(x)=C/\sqrt{1-x^2}$ for some 
constant $C\in\C$; \cite[\S4.3 (14)]{tricomi}. 
Combining this observation with Lemma \ref{lemma-1} it follows,
for every r.i.\ space $X$ satisfying 
$0<\underline{\alpha}_X\le\overline{\alpha}_X<1$,
that $T_X$ is either injective or  $\mathrm{dim}(\mathrm{Ker}(T_X))=1$.
Recall that 
$L^{2,\infty}$ is the \textit{smallest} r.i.\ space  
containing the function $1/\sqrt{1-x^2}$,  that is, 
$1/\sqrt{1-x^2}\in X$ if and only if $L^{2,\infty}\subseteq X$.

%%%%%%%%%%%%%%%%%%%%%%%%%%%%%%%%%%%%%%%%%%%%%%%%%%

The Parseval and Poincar\'e-Bertrand 
formulae are  important tools for studying the finite 
Hilbert transform in the spaces $L^p$, $1<p<\infty$, \cite[\S 4.3]{tricomi}. 
It should be noted that a result of Love 
is essential in order to have a sharp version of the 
Poincar\'e-Bertrand formula, \cite{love}.
The validity of both of these formulae can be extended to the setting of r.i.\ spaces.

\begin{proposition}\label{prop-2}
Let $X$ be a  r.i.\ space    satisfying
$0<\underline{\alpha}_X\le \overline{\alpha}_X<1$. 
\begin{itemize}
\item[(a)] Let $f\in L^1$ satisfy $fT_{X'}(g)\in L^1$ for all $g\in X'$. Then, for every
$g\in X'$, the function $gT(f)\in L^1$ and
$$
\int_{-1}^1fT_{X'}(g)=-\int_{-1}^1gT(f).
$$
\item[(b)] 
The Parseval formula holds for the pair $X$ and $X'$, that is,
\begin{equation*}\label{parseval}
\int_{-1}^{1}fT_{X'}(g)=-\int_{-1}^{1}gT_X(f),\quad f\in X, g\in X'.
\end{equation*}
\item[(c)] The Poincar\'e-Bertrand formula holds for the pair $X$ and $X'$, that is, 
for all $f\in X$ and $g\in X'$ we have
\begin{equation*}\label{poincare}
T(gT_X(f)+fT_{X'}(g))=(T_X(f))(T_{X'}(g))-fg,\quad \mathrm{a.e.}
\end{equation*}
\end{itemize}
\end{proposition}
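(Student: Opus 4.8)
The plan is to reduce each assertion to the classical Parseval and Poincar\'e--Bertrand formulae on the Lebesgue spaces $L^p$, $1<p<\infty$ (see \cite[\S4.3]{tricomi}, \cite{okada-elliot}, and \cite{love} for the sharp form of the latter), using Lemma \ref{lemma-1} to place $X$ and $X'$ between Lebesgue spaces and a truncation argument to pass from bounded functions to arbitrary ones. First, (b) is an immediate consequence of (a): for $f\in X$ and $g\in X'$ one has $T_{X'}(g)\in X'$, so $fT_{X'}(g)\in L^1$ by H\"older's inequality; thus $f$ satisfies the hypothesis of (a), and since $T(f)=T_X(f)$ its conclusion is exactly the Parseval formula. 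So the work lies in (a) and (c).

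For (a), apply Lemma \ref{lemma-1} to $X'$ (valid, as $0<\underline\alpha_{X'}\le\overline\alpha_{X'}<1$) to get $1<s<r<\infty$ with $L^r\subseteq X'\subseteq L^s$; then every $g\in X'$ lies in $L^s$ and every bounded function lies in $L^{s'}$. Set $f_n:=f\chi_{\{|f|\le n\}}\in L^\infty$. For each $n$ and each $g\in X'$ the classical Parseval formula for the dual pair $L^{s'}$, $L^s$ gives
\[
\int_{-1}^1 f_n\,T(g)=-\int_{-1}^1 g\,T(f_n),
\]
both integrals being finite by H\"older's inequality. To let $n\to\infty$ I would use two facts. (i) The linear map $g\mapsto fT_{X'}(g)$ from $X'$ into $L^1$ is closed (if $g_k\to g$ in $X'$ then $T_{X'}(g_k)\to T_{X'}(g)$ in $X'$, hence in measure, which pins down any $L^1$-limit of $fT_{X'}(g_k)$), so it is bounded by the closed graph theorem; inserting $\theta g\chi_A$ with $|\theta|=1$ chosen to unmask the modulus then yields the uniform estimate $\int_A|g\,T(f_n)|\le C\|g\chi_A\|_{X'}$, valid for every measurable $A$ and every $n$. (ii) Since $f_n\to f$ in $L^1$ and $T\colon L^1\to L^{1,\infty}$ is continuous, $T(f_n)\to T(f)$ in measure, hence $gT(f_n)\to gT(f)$ in measure. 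The delicate step is to upgrade (ii) to convergence of the integrals, that is, to uniform integrability of $\{gT(f_n)\}_n$; here one uses (i) together with the absolute continuity of the norm (at least one of $X$, $X'$ enjoys it, and one works with $f_n$ or with a truncation of $g$ accordingly) and the vanishing $\varphi_{X'}(0^+)=0$, which follows from $\underline\alpha_{X'}>0$ via Lemma \ref{lemma-1}. Granting this, the right-hand side of the display converges to $\int_{-1}^1 gT(f)$, so in particular $gT(f)\in L^1$, while the left-hand side converges to $\int_{-1}^1 fT_{X'}(g)$ by dominated convergence, the dominating function $|fT_{X'}(g)|$ lying in $L^1$ by hypothesis. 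This yields (a).

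For (c), fix $f\in X$ and $g\in X'$. By H\"older's inequality $gT_X(f)$, $fT_{X'}(g)$, $T_X(f)T_{X'}(g)$ and $fg$ all lie in $L^1$, so both sides of the claimed identity make sense, the left one as an element of $L^{1,\infty}$ by continuity of $T\colon L^1\to L^{1,\infty}$. Truncating $f_n:=f\chi_{\{|f|\le n\}}$ and $g_m:=g\chi_{\{|g|\le m\}}$, which are bounded and hence lie in every $L^p$, the classical Poincar\'e--Bertrand formula (in Love's sharp form \cite{love}) gives
\[
T\bigl(g_m\,T(f_n)+f_n\,T(g_m)\bigr)=\bigl(T(f_n)\bigr)\bigl(T(g_m)\bigr)-f_n g_m\qquad\text{a.e.}
\]
Letting $m\to\infty$ and then $n\to\infty$, and using $T(g_m)\to T_{X'}(g)$ in $L^s$, $T(f_n)\to T_X(f)$ in measure, the Parseval identity (a)--(b), and the same absolute-continuity considerations as in (a), one checks that the argument of $T$ on the left converges in $L^1$ while the right-hand side converges in measure; the continuity of $T\colon L^1\to L^{1,\infty}$ then forces the displayed identity to pass to $f$ and $g$, which is (c).

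The main obstacle, common to all three parts, is the passage to the limit: one must push a principal-value identity through sequences that converge only in $L^1$---so that their $T$-transforms converge merely in $L^{1,\infty}$---and that are then multiplied by possibly unbounded functions. This is precisely where Kolmogorov's weak-type $(1,1)$ bound, the uniform estimate from the closed graph theorem, the vanishing of the fundamental functions at $0^+$, and the absolute continuity of the norm of $X$ or of $X'$ (all made available by the Boyd-index hypotheses through Lemma \ref{lemma-1}) are needed; additional care is required near the endpoints $\pm1$, where the exceptional function $1/\sqrt{1-x^2}$ concentrates.
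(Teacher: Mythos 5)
Your overall strategy---reduce to the classical $L^p$ Parseval and Poincar\'e--Bertrand formulae for the bounded truncations $f_n=f\chi_{\{|f|\le n\}}$ via Lemma \ref{lemma-1}, then pass to the limit---is the same as the paper's, and part (b) as a corollary of (a) is fine. The gap is in the limit passage in (a), which you correctly isolate but do not close: you need uniform integrability of $\{gT(f_n)\}_n$, and the mechanism you propose for it does not work in the stated generality. Your uniform bound $\sup_n\int_A|gT(f_n)|\le C\|g\chi_A\|_{X'}$ yields uniform integrability only if $\|g\chi_A\|_{X'}\to0$ as $|A|\to0$, i.e.\ only if the norm of $X'$ is absolutely continuous at $g$. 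The Boyd-index hypotheses do not guarantee this: for example $X=L^{p',1}$ has $\underline{\alpha}_X=\overline{\alpha}_X=1/p'\in(0,1)$, yet $X'=L^{p,\infty}$ fails absolute continuity of the norm (for $g^*(t)=t^{-1/p}$ one has $\|g\chi_A\|_{X'}\not\to0$). The vanishing of $\varphi_{X'}$ at $0^+$ only controls $\|g\chi_A\|_{X'}$ for \emph{bounded} $g$, and the suggested repair of truncating $g$ instead reintroduces the same problem when the truncation of $g$ is removed (you would need $g_m\to g$ in $X'$, which is again absolute continuity). The claim that ``at least one of $X$, $X'$'' has absolutely continuous norm is neither justified nor deployed in a way that would rescue the estimate, which lives entirely on the $X'$ side.

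The paper closes exactly this gap by using the full strength of the hypothesis: since $fT_{X'}(h)\in L^1$ for \emph{every} $h\in X'$, one may take $h=g\chi_A$ for each $A\in\mathcal{B}$, and the classical Parseval identity for the bounded $f_n$ then shows that $\lim_n\int_A gT(f_n)=-\int_{-1}^1 fT_{X'}(g\chi_A)$ exists for \emph{every} Borel set $A$. This setwise convergence of the finite measures $A\mapsto\int_A gT(f_n)$, combined with the a.e.\ convergence $gT(f_n)\to gT(f)$ (along a subsequence), gives $gT(f)\in L^1$ and $gT(f_n)\to gT(f)$ in $L^1$ by a Vitali--Hahn--Saks type result, \cite[Lemma 2.3]{lewis}, with no absolute continuity of any norm required. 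That is the missing idea. Once this $L^1$-convergence is available, (c) also simplifies relative to your sketch: since $g\in X'\subseteq L^{q'}$ already, only $f$ needs to be truncated, so the iterated limit in $m$ and $n$ and the attendant convergence questions for $T(g_m)$ can be avoided entirely.
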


\begin{proof}
(a) Assume first that $f\in L^\infty$. By Lemma \ref{lemma-1}, there exists
$1<q<\infty$ satisfying $L^q\subseteq X$, so that $X'\subseteq L^{q'}$. Then
$$
\int_{-1}^1fT_{X'}(g)=-\int_{-1}^1gT_X(f)=-\int_{-1}^1gT(f),\quad g\in X',
$$
via Parseval formula for the pair $L^q$ and $L^{q'}$, \cite[Sect. 11.10.8]{king}, 
\cite[Sect. 4.2, 4.3]{tricomi}, because 
$f\in L^\infty\subseteq L^q$ and $g\in X'\subseteq L^{q'}$.

Now let $f\in L^1$ be a general function satisfying the assumption of (a). Define  
$A_n:=|f|^{-1}([0,n])$ and $f_n:=f\chi_{A_n}\in L^\infty$ for $n\in\N$. 
Then $\lim_n f_n=f$ in $L^1$. It follows from Kolmogorov's Theorem that 
$\lim_nT(f_n)=T(f) $ in $L^{1,\infty}$. Since the inclusion $L^{1,\infty}\subseteq L^0$
is continuous, we can conclude that $\lim_nT(f_n)=T(f)$ in measure. Accordingly,
by passing to a subsequence if necessary, we 
may assume that $\lim_nT_X(f_n)=\lim_nT(f_n)=T(f)$ pointwise a.e.

Fix $g\in X'$. Given any $A\in\mathcal{B}$, the Dominated Convergence Theorem
ensures that
\begin{equation}\label{i}
\lim_n f_nT_{X'}(g\chi_A)=fT_{X'}(g\chi_A),\quad \text{in } L^1,
\end{equation}
as $|f_nT_{X'}(g\chi_A)|\le|fT_{X'}(g\chi_A)|$ pointwise for 
$n\in\N$ and because $fT_{X'}(g\chi_A)\in L^1$ by assumption.  
For each $n\in\N$, the first part of this 
proof applied to $f_n\in L^\infty\subseteq X$ yields
$\int_{-1}^1f_nT_{X'}(g\chi_A)=-\int_{-1}^1(g\chi_A)T_X(f_n)$. It follows from \eqref{i}
that
\begin{align*}\label{ii}
\lim_n\int_AgT_X(f_n)&=\lim_n\int_{-1}^1 (g\chi_A)T_X(f_n)
\nonumber
\\ & =
-\lim_n\int_{-1}^1 f_nT_{X'}(g\chi_A)
=
-\int_{-1}^1 fT_{X'}(g\chi_A).
\end{align*}
Since this holds for all sets $A\in\mathcal{B}$ and since 
$\lim_n gT_X(f_n)=gT(f)$
pointwise a.e., we can conclude that both $gT(f)\in L^1$ and 
\begin{equation}\label{iii}
\lim_n gT_X(f_n)=gT(f),\quad \text{in } L^1;
\end{equation}
see, for example, \cite[Lemma 2.3]{lewis}. 
This and \eqref{i} with $A:=(-1,1)$ ensure that 
$\int_{-1}^1fT_{X'}(g)=-\int_{-1}^1gT(f)$. So, (a) is established.

(b) Given any $f\in X$ and $g\in X'$, H\"older's inequality 
ensures that $fT_{X'}(g)\in L^1$. So, part (b) follows from (a).

(c) Fix $f\in X$ and $g\in X'$. The proof of part (a) shows that there exists
a sequence $\{f_n\}_{n=1}^\infty\subseteq L^\infty\subseteq X$
satisfying the conditions:
\begin{itemize}
\item[(i)] $\lim_n f_n=f$ and $\lim_n T_X(f_n)=T_X(f)$ pointwise a.e.,
as well as
\item[(ii)] $\lim_n f_nT_{X'}(g)=fT_{X'}(g)$ in $L^1$ 
and $\lim_n gT_X(f_n)=gT_X(f)$ in $L^1$;
\end{itemize}
see \eqref{i} with $A:=(-1,1)$ and \eqref{iii}, respectively. Condition (ii)
implies that
\begin{equation}\label{iv}
\lim_n T(gT_X(f_n)+f_nT_{X'}(g))=T(gT_X(f)+fT_{X'}(g))
\end{equation}
in $L^{1,\infty}$ (via Kolmogorov's Theorem) and 
hence, in $L^0$. On the other hand, condition (i) implies that
\begin{equation}\label{v}
\lim_n \big((T_X(f_n))(T_{X'}(g))-f_ng\big)=
(T_X(f))(T_{X'}(g))-fg
\end{equation}
pointwise a.e. As in the proof of part (a), select $1<q<\infty$
such that $L^q\subseteq X$. Since $f_n\in L^\infty\subseteq L^q$ 
for $n\in\N$ and $g\in X'\subseteq L^{q'}$,
the Poincar\'e-Bertrand formula for the pair $L^q$ 
and $L^{q'}$ gives, for each $n\in\N$, that
\begin{equation}\label{vi}
T(gT_X(f_n)+f_nT_{X'}(g)=(T_X(f_n)(T_{X'}(g))-f_ng,\quad a.e.,
\end{equation}
with the identities holding outside a null set which is independent
of $n\in\N$. In view of \eqref{iv} and \eqref{v}, take the limit
of both sides of \eqref{vi} in $L^0$ to obtain 
the identity $T(gT_X(f)+fT_{X'}(g))=(T_X(f))(T_{X'}(g))-fg$ in 
$L^0$. This is precisely the Poincar\'e-Bertrand formula for $f\in X$ and $g\in X'$.
\end{proof}

%%%%%%%%%%%%%%%%%%%%%%%%%%%%%%%%%%%%%%%%%

We  can now extend certain  results obtained in  \cite{okada-elliot}, \cite[\S4.3]{tricomi}  
for the spaces  $L^p$ with $1<p<2$ to the larger 
family of r.i.\ spaces satisfying  
$1/2<\underline{\alpha}_X\le \overline{\alpha}_X<1$.

For each $f\in X$ define pointwise the measurable function 
\begin{equation}\label{T-hat}
(\widehat{T}_X(f))(x):=\frac{-1}{\sqrt{1-x^2}}\, 
T_X(\sqrt{1-t^2}f(t))(x),\quad \mathrm{a.e. }\; x\in (-1,1).
\end{equation}

\begin{theorem}\label{theo-3}
Let $X$ be a  r.i.\ space  satisfying
$1/2<\underline{\alpha}_X\le \overline{\alpha}_X<1.$
\begin{itemize}
\item[(a)] $\mathrm{Ker}(T_X)$ is the 1-dimensional 
subspace of $X$ spanned by the function $1/\sqrt{1-x^2}$.
\item[(b)] The linear operator $\widehat{T}_X$ defined by \eqref{T-hat}
maps $X$ boundedly into $X$ and satisfies 
 $T_X\widehat T_X=I_X$ (the identity operator on $X$). Moreover,
\begin{equation}\label{3-b}
\int_{-1}^1(\widehat{T}_X(f))(x)\,dx=0,\quad f\in X. 
\end{equation}
\item[(c)] The operator $T_X\colon X\to X$ is surjective.
\item[(d)] The identity $\widehat T_XT_X=I_X -P_X$ holds, 
with $P_X$ the bounded projection given by
\begin{equation}\label{3-d}
f\mapsto P_X(f):=\left(\frac1\pi\int_{-1}^1 f(t)\,dt\right)
\frac{1}{\sqrt{1-x^2}},\quad f\in X.
\end{equation}
\item[(e)] The operator $\widehat{T}_X$ is  an isomorphism 
onto its range $\text{R}(\widehat{T}_X)$. Moreover,
\begin{equation}\label{3-e}
\text{R}(\widehat{T}_X)=\left\{f\in X: \int_{-1}^{1} f(x)dx=0\right\}.
\end{equation}
\item[(f)] The following decomposition of $X$ holds 
(with $\langle\cdot\rangle$ denoting linear span):
\begin{equation}\label{3-f}
X=\left\{f\in X: \int_{-1}^{1} f(x)dx=0\right\}\oplus \left\langle 
\frac{1}{\sqrt{1-x^2}}\right\rangle
=\text{R}(\widehat{T}_X) \oplus \left\langle 
\frac{1}{\sqrt{1-x^2}}\right\rangle.
\end{equation}
\end{itemize}
\end{theorem}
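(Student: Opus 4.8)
The plan is to establish the six items of Theorem \ref{theo-3} in an order that builds on the Parseval and Poincaré–Bertrand formulae of Proposition \ref{prop-2}, together with Lemma \ref{lemma-1}, which lets me borrow the corresponding $L^p$-facts of Tricomi and Okada–Elliott. First I would prove (a): the inclusion $\langle 1/\sqrt{1-x^2}\rangle\subseteq\mathrm{Ker}(T_X)$ is exactly \eqref{0}, and it is nontrivial since $\underline\alpha_X>1/2$ forces $X\subseteq L^p$ for some $p<2$ by Lemma \ref{lemma-1}, so $1/\sqrt{1-x^2}\in X$. Conversely, if $T_X(f)=0$ then, choosing $1<q<2$ with $X\subseteq L^q$ (again Lemma \ref{lemma-1}), $f\in L^q$ and the classical uniqueness result $f(x)=C/\sqrt{1-x^2}$ applies. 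This also already gives $\dim\mathrm{Ker}(T_X)=1$.

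Next, for (b) I would verify that $\widehat T_X$ is well-defined and bounded on $X$. Boundedness is the key structural point: for $f\in X$ the function $\sqrt{1-t^2}\,f(t)$ lies in $X$ (it is dominated by $|f|$, and $X$ is a B.f.s.), so $T_X(\sqrt{1-t^2}f)\in X$; but dividing by $\sqrt{1-x^2}$ can worsen integrability near $\pm1$, and here I expect the main obstacle. The way around it is to invoke the known $L^p$-bound for the operator $f\mapsto (1/\sqrt{1-x^2})T(\sqrt{1-t^2}f)$ (the ``dominant equation'' operator of \cite[\S4.3]{tricomi}, \cite{okada-elliot}) on $L^q$ and $L^p$ for the pair $1/\beta<p<q<1/\alpha$ produced by Lemma \ref{lemma-1} with $1/2<\alpha<\underline\alpha_X\le\overline\alpha_X<\beta<1$, and then apply Boyd's interpolation theorem \cite[Theorem III.5.18]{bennett-sharpley} to transfer boundedness to $X$ — this is precisely the mechanism already used in the text to get $T_X$ bounded. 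The identity $T_X\widehat T_X=I_X$ then follows by density: it holds on $L^\infty$ (or simple functions) by the classical airfoil inversion formula, and both sides are bounded on $X$ with $L^\infty$ dense when the norm is absolutely continuous; the Fatou property and a truncation argument as in the proof of Proposition \ref{prop-2}(a) handle the general r.i.\ space. The vanishing integral \eqref{3-b} is the classical side condition $\int_{-1}^1 T(\sqrt{1-t^2}f)(x)\,dx/\sqrt{1-x^2}=0$, again first on $L^\infty$ and then extended.

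Item (c) is now immediate: $T_X\widehat T_X=I_X$ forces $T_X$ onto. For (d) I would compute $\widehat T_X T_X$ using the Poincaré–Bertrand formula of Proposition \ref{prop-2}(c): applying it with $g=1/\sqrt{1-x^2}$ (which lies in $X'$, since $\overline\alpha_X<1$ gives $\underline\alpha_{X'}=1-\overline\alpha_X>0$, hence $L^{2,\infty}\subseteq X'$ is \emph{not} automatic — rather one needs $\overline\alpha_{X'}<1$, i.e.\ $\underline\alpha_X>0$, so $X'\subseteq L^{p'}$ and one checks $1/\sqrt{1-x^2}\in X'$ directly from $\overline\alpha_{X'}<1/2$ when $\underline\alpha_X>1/2$) and using \eqref{0}, the Poincaré–Bertrand identity collapses to the classical relation $\widehat T_X T_X(f)=f-\bigl(\frac1\pi\int_{-1}^1 f\bigr)\frac{1}{\sqrt{1-x^2}}$, which is exactly $I_X-P_X$ with $P_X$ as in \eqref{3-d}; boundedness of $P_X$ follows since $f\mapsto\int_{-1}^1 f$ is a bounded functional on $X$ (as $\chi_{(-1,1)}\in L^\infty\subseteq X'$) and $1/\sqrt{1-x^2}\in X$. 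That $P_X$ is a projection is the computation $P_X^2=P_X$ using $\frac1\pi\int_{-1}^1\frac{dt}{\sqrt{1-t^2}}=1$.

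Finally, (e) and (f) are formal consequences. From $T_X\widehat T_X=I_X$, the operator $\widehat T_X$ is injective with closed range and bounded inverse $T_X|_{\mathrm{R}(\widehat T_X)}$, so it is an isomorphism onto its range; \eqref{3-b} gives $\mathrm{R}(\widehat T_X)\subseteq\{f:\int_{-1}^1 f=0\}$, and conversely if $\int_{-1}^1 f=0$ then $P_X(f)=0$, so $f=\widehat T_X(T_X(f))\in\mathrm{R}(\widehat T_X)$ by (d); this proves \eqref{3-e}. For (f), $P_X$ is a bounded projection onto $\langle 1/\sqrt{1-x^2}\rangle$ with kernel $\{f:\int_{-1}^1 f=0\}=\mathrm{R}(\widehat T_X)$, so $X=\mathrm{R}(I-P_X)\oplus\mathrm{R}(P_X)$ is exactly \eqref{3-f}. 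The one genuine difficulty throughout, as flagged, is the boundedness of $\widehat T_X$ on $X$ — everything else is either classical $L^p$-input routed through Lemma \ref{lemma-1} and Boyd's theorem, or soft functional-analytic bookkeeping once that bound is in hand.
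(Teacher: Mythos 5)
Your items (a), (b), (c), (e) and (f) follow essentially the paper's route. Two minor points there: in (a), the inclusion you need from Lemma \ref{lemma-1} in order to conclude $1/\sqrt{1-x^2}\in X$ is $L^q\subseteq X$ for some $q<2$, not $X\subseteq L^p$ (an upper inclusion says nothing about membership of a specific function); and in (b) the identity $T_X\widehat T_X=I_X$ needs no density of $L^\infty$ in $X$ (which fails for non-separable r.i.\ spaces such as Marcinkiewicz spaces) --- one simply restricts the known $L^p$-identity $T_{L^p}\widehat T_{L^p}=I_{L^p}$ to $X\subseteq L^p$, which is what the paper does.

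The genuine gap is in (d). You apply the Poincar\'e--Bertrand formula with $g=1/\sqrt{1-x^2}$ and assert that this function lies in $X'$. It does not: under the hypothesis $1/2<\underline{\alpha}_X\le\overline{\alpha}_X<1$ one has $0<\underline{\alpha}_{X'}\le\overline{\alpha}_{X'}<1/2$, so Lemma \ref{lemma-1} gives $X'\subseteq L^p$ for some $p>2$, while $1/\sqrt{1-x^2}\notin L^p$ for any $p\ge2$; equivalently, $X'\subsetneqq L^{2,\infty}$, and $1/\sqrt{1-x^2}$ belongs to a r.i.\ space $Y$ exactly when $L^{2,\infty}\subseteq Y$. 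Hence Proposition \ref{prop-2}(c) is not applicable with this choice of $g$. Worse, if the formula did hold with $T(g)=0$ it would collapse to $T\bigl(T_X(f)(t)/\sqrt{1-t^2}\bigr)=-f/\sqrt{1-x^2}$, i.e.\ $\widecheck T_X T_X=I_X$, which asserts injectivity of $T_X$ and contradicts part (a); note also that the operator produced is $\widecheck T_X$, not $\widehat T_X$, so the computation would not yield \eqref{3-d} in any case. The correct (and simpler) derivation, used in the paper, avoids Poincar\'e--Bertrand entirely: from $T_X\widehat T_X=I_X$ one gets $(I_X-\widehat T_X T_X)(f)\in\mathrm{Ker}(T_X)=\langle 1/\sqrt{1-x^2}\rangle$ by (a), so $(I_X-\widehat T_X T_X)(f)=c/\sqrt{1-x^2}$ for some $c\in\C$, and integrating over $(-1,1)$ and invoking \eqref{3-b} pins down $c=\frac1\pi\int_{-1}^1 f$. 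Since your (e) and (f) rest on (d), they are repaired once (d) is.
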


\begin{proof}
(a) Since $1/2<\underline{\alpha}_X$ we have $L^{2,\infty}\subseteq X$ and so 
$1/\sqrt{1-x^2}\in X$. Accordingly, $\langle 
\frac{1}{\sqrt{1-x^2}}\rangle\subseteq \text{Ker}(T_X)$. 
Conversely, let $f\in \text{Ker}(T_X)$.
By Lemma \ref{lemma-1} there is $1<p<2$ such that $f\in L^p$. 
As noted   prior to Proposition \ref{prop-2}, this implies that 
$f(x)=c/\sqrt{1-x^2}$ for some $c\in \C$.

(b) Via Lemma \ref{lemma-1} there exist $1<p<q<2$ such that 
$1/q<\underline{\alpha}_X\le \overline{\alpha}_X<1/p$ and 
$L^q\subseteq X \subseteq L^p$.  Consider the weight function
$\rho(x):= 1/\sqrt{1-x^2}$ on $(-1,1)$. Appealing to results on 
boundedness of the Hilbert transform on weighted $L^p$ spaces, $T$ is bounded
from the weighted space $L^p((-1,1),\rho)$ into itself 
and from the weighted space $L^q((-1,1),\rho)$ 
into itself,  \cite[Ch.1, Theorem 4.1]{gohberg-krupnik}.
This is equivalent to the fact that
$$
f\mapsto \widehat{T}(f):=\frac{-1}{\sqrt{1-x^2}} T_X\big(\sqrt{1-x^2}f(x)\big),
$$
is well defined on $L^p$ and bounded 
as an operator from $L^p$ into $L^p$ and from $L^q$ into $L^q$.
The condition on the indices $1/q<\underline{\alpha}_X\le \overline{\alpha}_X<1/p$
allows us to apply Boyd's interpolation theorem, 
\cite[Theorem 2.b.11]{lindenstrauss-tzafriri},
to conclude that $\widehat{T}$ maps $X$ boundedly into $X$.
According to \eqref{T-hat}, note that $\widehat{T}_X$ is the operator 
$\widehat{T}\colon X\to X$.

To establish $T_X\widehat{T}_X=I_X$, choose $1<p<2$  
such that $X\subseteq L^p$.
It follows from (2.7) on p.46 of \cite{okada-elliot} 
that $T_{L^p}\widehat{T}_{L^p}=I_{L^p}$.
Let $f\in X\subseteq L^p$. Since all three operators $T_X$, $\widehat{T}_X$ and $I_X$ map
$X$ into $X$ it follows that $T_X(\widehat{T}_X(f))=f=I_X(f)$.

To establish \eqref{3-b} let $f\in X\subseteq L^p$, with $1<p<2$ as above. 
Then \eqref{3-b} above follows from the validity of 
\eqref{3-b} in $L^p$; see (2.6) on p.46 of \cite{okada-elliot}.

(c) Follows immediately from $T_X\widehat{T}_X=I_X$.

(d) Since $(1/\pi)\int_{-1}^1dx/\sqrt{1-x^2}=1$, it follows that $P_X$ as given in 
\eqref{3-d} is indeed a linear projection from $X$ onto the 1-dimensional subspace 
$\langle \frac{1}{\sqrt{1-x^2}}\rangle\subseteq X$. 
The boundedness of $P_X$ is a consequence of H\"older's inequality
(applied to   $f=\mathbf{1}\cdot f$ with $\mathbf{1}\in X'$ and $f\in X$ fixed), namely  
$$
\|P_X(f)\|_X\le \frac1\pi \left\|\frac{1}{\sqrt{1-x^2}}\right\|_X \|\mathbf{1}\|_{X'}\|f\|_X.
$$

To verify that $P_X=I_X-\widehat{T}_XT_X$, fix $f\in X$. Then $T_X\widehat{T}_X=I_X$ 
implies that $T_X(I_X-\widehat{T}_XT_X)(f)=0$,   that is,  
$$
(I_X-\widehat{T}_XT_X)(f)\in \text{Ker}(T_X).
$$
According to part (a) there exists $c\in\C$ such that 
\begin{equation}\label{b}
(I_X-\widehat{T}_XT_X)(f)=\frac{c}{\sqrt{1-x^2}}.
\end{equation}
 But,   $\int_{-1}^1\widehat{T}_X(T_X(f))(x)\,dx=0$ (by \eqref{3-b}) and so \eqref{b}
 implies that
$$
\int_{-1}^1f(x)\,dx=c\int_{-1}^1dx/\sqrt{1-x^2}=c\pi,
$$ 
that is, $c=(1/\pi) \int_{-1}^1f(x)\,dx$. So, again 
by \eqref{b}, we can conclude that $(I_X-\widehat{T}_XT_X)(f)=P_X(f)$.  
Since $f\in X$ is arbitrary, it follows that $I_X-\widehat{T}_XT_X=P_X$.

(e) The identity $T_X\widehat{T}_X=I_X$ implies that $\widehat{T}_X$ is injective. So, 
$\widehat{T}_X\colon X\to R(\widehat{T})$ is a linear bijection.

To verify \eqref{3-e} suppose $f\in X$ 
satisfies $\int_{-1}^1f(x)\,dx=0$, i.e.,   $P_X(f)=0$. Then the identity
$\widehat{T}_XT_X=I_X-P_X$ shows that $f=\widehat{T}_X(h)$ 
with $h:=T_X(f)\in X$,  i.e., $f\in R(\widehat{T}_X)$. 
Conversely, suppose that $f=\widehat{T}_X(g)\in R(\widehat{T}_X)$ for 
some $g\in X$. Then $g=T_X(f)$   as $T_X\widehat{T}_X=I_X$. Accordingly, 
$$
f=\widehat{T}_X(g)=\widehat{T}_XT_X(f)=I_X(f)-P_X(f)=f-P_X(f)
$$
  and so $P_X(f)=0$. It is then clear from \eqref{3-d} 
that $\int_{-1}^1f(x)\,dx=0$, i.e., $f$ belongs to the right-side
of \eqref{3-e}. This establishes \eqref{3-e}.

  Since the linear functional
$f\mapsto \varphi_1(f):=\int_{-1}^1f(x)\,dx$, for $f\in X$, 
belongs to $X^*$, as $\mathbf{1}\in X'\subseteq X^*$, it follows via \eqref{3-e} that 
$R(\widehat{T}_X)=\text{Ker}(\varphi_1)$ and hence,
$R(\widehat{T}_X)$ is a \textit{closed} subspace of $X$. Accordingly, 
$\widehat{T}_X\colon X\to R(\widehat{T}_X)$ is a Banach space isomorphism.

(f) The identity $\widehat{T}_XT_X+P_X=I_X$ shows that 
each $f\in X$ has the form $f=\widehat{T}_X(T_X(f))+P_X(f)$ with 
$\widehat{T}_X(T_X(f))\in R(\widehat{T}_X)$ and, via \eqref{3-d}, 
$P_X(f)\in \langle1/\sqrt{1-x^2}\rangle$. 
So, it remains to show that the decomposition in \eqref{3-f} 
is a \textit{direct sum}. To this effect, let 
$h\in R(\widehat{T}_X)\cap \langle1/\sqrt{1-x^2}\rangle$, in which case $h=\widehat{T}_X(f)$ 
for some $f\in X$ and $h=c/\sqrt{1-x^2}$ for some $c\in\C$, 
that is, $\widehat{T}_X(f)=c/\sqrt{1-x^2}$. Integrating
both sides of this identity over $(-1,1)$ and appealing to 
\eqref{3-b} shows that $c=0$. Hence, $h=0$.
\end{proof}

%%%%%%%%%%%%%%%%%%%%%%%%%%%%%%%%%%%%%%%%%%%

Next we extend   certain results obtained in    \cite{okada-elliot}, \cite[\S4.3]{tricomi},  for the  
spaces $L^p$ with $2<p<\infty$, 
to the larger family of r.i.\ spaces $X$ satisfying 
$0<\underline{\alpha}_X\le \overline{\alpha}_X<1/2$.
Then $1/2<\underline{\alpha}_{X'}\le \overline{\alpha}_{X'}<1$ 
and so $1/\sqrt{1-x^2}\in X'$. Hence, for every
$f\in X$, the function $f(x)/\sqrt{1-x^2}\in L^1$. Accordingly, 
we can define  pointwise the measurable function  
\begin{equation}\label{T-check}
(\widecheck{T}_X(f))(x):=-\sqrt{1-x^2}\, 
T\Big(\frac{f(t)}{\sqrt{1-t^2}}\Big)(x),\quad \mathrm{a.e. } \;x\in(-1,1).
\end{equation}

\begin{theorem}\label{theo-4}
Let $X$ be a  r.i.\ space  satisfying
$0<\underline{\alpha}_X\le \overline{\alpha}_X<1/2.$
\begin{itemize}

\item[(a)] The operator $T_X\colon X\to X$ is injective.

\item[(b)] The linear operator $\widecheck{T}_X$ defined by \eqref{T-check}
is bounded from  $X$ into $X$ and satisfies $\widecheck T_XT_X=I_X$.

\item[(c)] The identity $T_X\widecheck T_X=I_X -Q_X$ holds, with $Q_X$ 
the bounded projection  given by
\begin{equation}\label{B}
f\in X\mapsto Q_X(f):=\left(\frac1\pi\int_{-1}^1 
\frac{f(x)}{\sqrt{1-x^2}}\,dx\right) \mathbf{1}.
\end{equation}

\item[(d)] The range of $T_X$ is the closed subspace of $X$ given by
\begin{equation}\label{C}
R(T_X)=\left\{f\in X: \int_{-1}^{1} \frac{f(x)}{\sqrt{1-x^2}}dx=0\right\}
=\mathrm{Ker}(Q_X).
\end{equation}
Moreover,  $\widecheck T_X$ is an isomorphism from $\text{R}(T_X)$ onto $X$.

\item[(e)] The following decomposition of $X$ holds:
\begin{equation}\label{D}
X=\left\{f\in X: \int_{-1}^{1} \frac{f(x)}{\sqrt{1-x^2}}dx=0\right\}\oplus \left\langle 
\mathbf{1}\right\rangle= R(T_X)\oplus \left\langle 
\mathbf{1}\right\rangle.
\end{equation}
\end{itemize}
\end{theorem}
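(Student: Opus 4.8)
The plan is to run the argument in parallel with the proof of Theorem \ref{theo-3}, the roles played there by ``$X\subseteq L^p$ for some $1<p<2$'' and ``$1/\sqrt{1-x^2}\in X$'' now being taken over by ``$X\subseteq L^p$ for some $p>2$'' (available from Lemma \ref{lemma-1} since $\overline{\alpha}_X<1/2$) and ``$1/\sqrt{1-x^2}\in X'$'' (which holds because $1/2<\underline{\alpha}_{X'}$, as noted above). For part (a), fix such a $p>2$ with $X\subseteq L^p$; since $1/\sqrt{1-x^2}\notin L^2\supseteq L^p$, this function is not in $X$, so by the dichotomy recalled just before Proposition \ref{prop-2} the kernel $\mathrm{Ker}(T_X)$, which is either $\{0\}$ or $\langle 1/\sqrt{1-x^2}\rangle$, must be $\{0\}$: $T_X$ is injective.

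For the boundedness in part (b) I would follow the route of Theorem \ref{theo-3}(b). Use Lemma \ref{lemma-1} to pick $2<p<q<\infty$ with $1/q<\underline{\alpha}_X\le\overline{\alpha}_X<1/p$ and $L^q\subseteq X\subseteq L^p$. After the substitution $g(t)=f(t)/\sqrt{1-t^2}$, the formula in \eqref{T-check} defines a bounded operator on $L^r$ if and only if $T$ is bounded on the weighted space $L^r\big((-1,1),(1-t^2)^{r/2}\,dt\big)$; the power weight $(1-t^2)^{r/2}$, whose endpoint exponents at $\pm1$ both equal $r/2$, lies in Muckenhoupt's class $A_r$ precisely when $r>2$, so the weighted estimates of \cite[Ch.1, Theorem 4.1]{gohberg-krupnik} make it bounded on $L^p$ and on $L^q$. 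Boyd's interpolation theorem \cite[Theorem 2.b.11]{lindenstrauss-tzafriri} then yields boundedness on $X$, and by \eqref{T-check} this operator is $\widecheck{T}_X$.

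Next I would bring in the scalar facts from the $2<p<\infty$ theory of \cite{okada-elliot}: the identities $\widecheck{T}_X T_X=I_X$ and $T_X\widecheck{T}_X=I_X-Q_X$ (with $Q_X$ given by the explicit formula \eqref{B}) hold on $L^p$, and since $X\subseteq L^p$ and all operators involved map $X$ into $X$, they descend to $X$; that $Q_X$ is a bounded projection of $X$ onto $\langle\mathbf{1}\rangle$ follows from H\"older's inequality (using $1/\sqrt{1-x^2}\in X'$ and $\mathbf{1}\in X$) together with $\frac1\pi\int_{-1}^1 dx/\sqrt{1-x^2}=1$, so that $Q_X(\mathbf{1})=\mathbf{1}$. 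Parts (d) and (e) are then formal, as in Theorem \ref{theo-3}(e)--(f): writing $f=T_X\widecheck{T}_X(f)+Q_X(f)$ gives $R(T_X)=\mathrm{Ker}(Q_X)=\{f\in X:\int_{-1}^1 f(x)(1-x^2)^{-1/2}\,dx=0\}$, closed as the kernel of a bounded linear functional; $\widecheck{T}_X$ restricts to a bounded bijection of $R(T_X)$ onto $X$ (surjective since $\widecheck{T}_X T_X=I_X$; injective since $\widecheck{T}_X(f)=0$ forces $f=Q_X(f)\in\langle\mathbf{1}\rangle$, while $R(T_X)\cap\langle\mathbf{1}\rangle=\{0\}$ because $Q_X(\mathbf{1})=\mathbf{1}$), hence an isomorphism by the open mapping theorem; and the sum $R(T_X)\oplus\langle\mathbf{1}\rangle$ is direct for the same reason.

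The one genuinely substantive point is part (b): verifying that $(1-t^2)^{r/2}\in A_r$ exactly for $r>2$ — so that the hypothesis $\overline{\alpha}_X<1/2$ is precisely what makes the construction work — and organizing the descent of the operator identities to the scalar $L^p$ statements of \cite{okada-elliot}; everything else is bookkeeping. I note that the theorem can alternatively be obtained by duality from Theorem \ref{theo-3} applied to $X'$: by the Parseval formula of Proposition \ref{prop-2}(b), $\widecheck{T}_X$ is, up to sign, the restriction to $X\hookrightarrow(X')^*$ of the Banach-space adjoint of $\widehat{T}_{X'}$ and $Q_X$ that of $P_{X'}$; I would, however, prefer the direct argument, which is shorter and keeps everything acting inside $X$.
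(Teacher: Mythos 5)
Your proposal is correct and follows essentially the same route as the paper's proof: injectivity from $1/\sqrt{1-x^2}\notin X$ and the kernel dichotomy, boundedness of $\widecheck{T}_X$ via weighted $L^p$- and $L^q$-estimates for the Hilbert transform plus Boyd interpolation, descent of the identities $\widecheck{T}_XT_X=I_X$ and $T_X\widecheck{T}_X=I_X-Q_X$ from the scalar $L^p$ results of Okada--Elliot, and the formal consequences for (d) and (e). Your explicit check that $(1-t^2)^{r/2}\in A_r$ exactly when $r>2$ is a nice clarification of why the hypothesis $\overline{\alpha}_X<1/2$ is the right one, but the paper obtains the same weighted bounds directly from the cited theorem of Gohberg--Krupnik.
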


\begin{proof}
(a) Since $\overline{\alpha}_X<1/2$ we have that $X\subsetneqq L^{2,\infty}$ 
and so $1/\sqrt{1-x^2}\notin X$. Hence, $T_X$ is injective; see the discussion after 
\eqref{0}.

(b) Via Lemma \ref{lemma-1} there exist $2<p<q<\infty$ such that 
$1/q<\underline{\alpha}_X\le \overline{\alpha}_X<1/p$ and 
$L^q\subseteq X \subseteq L^p$.  Consider the weight function
$\rho(x):= \sqrt{1-x^2}$ on $(-1,1)$. Appealing again to results on 
boundedness of the Hilbert transform on weighted $L^p$ spaces, $T$ is bounded
from the weighted space $L^p((-1,1),\rho)$ into itself 
and from the weighted space $L^q((-1,1),\rho)$ 
into itself, \cite[Ch.1 Theorem 4.1]{gohberg-krupnik}.
This is equivalent to the fact that
$$
f\mapsto \widecheck{T}(f):= -\sqrt{1-x^2}\,T\Big(\frac{f(x)}{\sqrt{1-x^2}}\Big),
$$
is well defined on $L^p$ and bounded  
as an operator from $L^p$ into $L^p$ and from $L^q$ into $L^q$.
The condition on the indices $1/q<\underline{\alpha}_X\le \overline{\alpha}_X<1/p$
allows us to apply Boyd's interpolation theorem, 
\cite[Theorem 2.b.11]{lindenstrauss-tzafriri},
to deduce that $\widecheck{T}$ maps $X$ boundedly  into $X$. 
According to \eqref{T-check} note that  $\widecheck{T}_X$ is the 
operator $\widecheck{T}\colon X\to X$.

To establish $\widecheck T_XT_X=I_X$, recall that $X\subseteq L^p$.
It follows from (2.10) on p.48 of  \cite{okada-elliot} that $\widecheck T_{L^p}T_{L^p}=I_{L^p}$.
Let $f\in X\subseteq L^p$. Since all three operators $T_X$, $\widecheck{T}_X$ and $I_X$ map
$X$ into $X$ it follows that $\widecheck T_X(T_X(f))=f=I_X(f)$.

(c) It is routine to check that $Q_X$ is a linear projection onto 
the 1-dimensional space $\langle\mathbf{1}\rangle$.
Since $g(x)=1/\sqrt{1-x^2}\in X'$, the boundedness of $Q_X$ follows 
from \eqref{B} via H\"older's inequality, namely
$$
\|Q_X(f)\|_X\le\frac1\pi\|g\|_{X'} \|\mathbf{1}\|_X\|f\|_X,\quad f\in X.
$$

To establish the identity $T_X\widecheck{T}_X=I_X-Q_X$, 
choose $2<p<\infty$  such that $X\subseteq L^p$.
It follows from (2.11) on p.48 of  \cite{okada-elliot} that 
$T_{L^p}\widecheck{T}_{L^p}=I_{L^p}-Q_{L^p}$.
Let $f\in X\subseteq L^p$. Since all four operators $T_X$, $\widecheck{T}_X$, 
$Q_X$ and $I_X$ map
$X$ into $X$ it follows that $T_X(\widecheck{T}_X(f))=f-Q_X(f)=(I_X-Q_X)(f)$.

(d) Using the identities $\widecheck{T}_XT_X=I_X$ and $T_X\widecheck{T}_X=I_X-Q_X$ 
one can argue
as on p.48 of \cite{okada-elliot} to verify the identity \eqref{C}. 
In particular, since $Q_X$ is bounded, it
follows that $R(T_X)=\text{Ker}(Q_X)$ is a \textit{closed} subspace of $X$. It is clear from 
$\widecheck{T}_XT_X=I_X$ that $\widecheck{T}_X$ maps $R(T_X)$ onto $X$ and 
also that $\widecheck{T}_X$
restricted to $R(T_X)$ is injective, i.e., 
$\widecheck{T}_X\colon R(T_X)\to X$ is a linear 
bijection and bounded. By the Open Mapping Theorem 
$\widecheck{T}_X\colon R(T_X)\to X$ is actually a Banach space isomorphism.

(e) As $Q_X$ is a bounded projection, we have $X=\text{Ker}(Q_X)\oplus R(Q_X)$.
But, $\text{Ker}(Q_X)=R(T_X)$ by part   (d) and 
$R(Q_X)=\langle\mathbf{1}\rangle$ by part   (c). 
The direct sum decomposition \eqref{D} is then immediate. 
\end{proof}

%%%%%%%%%%%%%%%%%%%%%%%%%%%%%%%%%%%%%%%%%%%%%

\begin{remark}
Let $X$ be a  r.i.\ space satisfying 
$0<\underline{\alpha}_{X}\le \overline{\alpha}_{X}<1/2$
or $1/2<\underline{\alpha}_X\le \overline{\alpha}_X<1$. 
Then $T_X\colon X\to X$ is a Fredholm operator, that is,
$\text{dim}( \text{Ker}(T_X))<\infty$, the range $R(T_X)$ is
a closed subspace of $X$ and 
$\text{dim}( X/R(T_X))<\infty$. This holds when
$1/2<\underline{\alpha}_X\le \overline{\alpha}_X<1$ because
$\text{dim}( \text{Ker}(T_X))=1$ and $T_X$ is surjective;
see Theorem \ref{theo-3}(a), (c). The operator $T_X$ is also
Fredholm when $0<\underline{\alpha}_{X}\le \overline{\alpha}_{X}<1/2$
because it  is injective, $R(T_X)$ is closed in $X$ and 
$\text{dim}( X/R(T_X))=1$; see (a), (d), (e) of Theorem \ref{theo-4}.
\end{remark}

%%%%%%%%%%%%%%%%%%%%%%%%%%%%%%%%%%%%%%%%%%%%%%%

A  consequence of Theorems \ref{theo-3} and \ref{theo-4} is 
the possibility to extend the results in 
\cite[Ch.11]{king}, \cite{okada-elliot}, \cite[\S4.3]{tricomi},  
concerning the \textit{inversion} of the 
airfoil equation 
\begin{equation}\label{airfoil}
(T(f))(t)=p.v. \frac{1}{\pi} \int_{-1}^{1}\frac{f(x)}{x-t}\,dx=g(t),
\quad \mathrm{a.e. }\; t\in(-1,1),
\end{equation}
within the class of $L^p$-spaces for $1<p<\infty$, $p\not=2$ (with $g\in L^p$ given), 
to the significantly larger class of r.i.\  spaces $X$ whose Boyd indices satisfy
$0<\underline{\alpha}_X\le \overline{\alpha}_X<1/2$ or 
$1/2<\underline{\alpha}_X\le \overline{\alpha}_X<1.$

\begin{corollary}\label{cor-airfoil}
Let $X$ be a  r.i.\ space. 
\begin{itemize}
\item[(a)] Suppose that 
$1/2<\underline{\alpha}_X\le \overline{\alpha}_X<1$ and $g\in X$ is fixed. Then all 
solutions  $f\in X$ of the airfoil equation \eqref{airfoil} are given by  
\begin{equation}\label{E}
f(x)=\frac{-1}{\sqrt{1-x^2}}\; T_X\left(\sqrt{1-t^2}g(t)\right) (x)
+ \frac{\lambda}{\sqrt{1-x^2}},\quad \mathrm{a.e. } \;x\in(-1,1),
\end{equation}
with $\lambda\in\mathbb{C}$ arbitrary.
\item[(b)] Suppose that  
$0<\underline{\alpha}_X\le \overline{\alpha}_X<1/2$ and $g\in X$ satisfies 
$\int_{-1}^{1} \frac{g(x)}{\sqrt{1-x^2}}dx=0.$
Then there is a unique solution $f\in X$ of the airfoil equation \eqref{airfoil}, namely  
$$
f(x):=-\sqrt{1-x^2}\; T_X\left(\frac{g(t)}{\sqrt{1-t^2}}\right)(x),\quad \mathrm{a.e. } \;x\in(-1,1).
$$
\end{itemize}
\end{corollary}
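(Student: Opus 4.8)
The plan is to derive Corollary \ref{cor-airfoil} directly from the structural results already established in Theorems \ref{theo-3} and \ref{theo-4}, since the airfoil equation \eqref{airfoil} is simply the equation $T_X(f)=g$ and these theorems describe completely the kernel, range, and (generalized) inverse of $T_X$ on the two relevant classes of r.i.\ spaces. So the corollary is essentially a restatement of those theorems in the language of the inversion problem, and the proof should be short.

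For part (a), where $1/2<\underline{\alpha}_X\le\overline{\alpha}_X<1$, I would argue as follows. By Theorem \ref{theo-3}(c) the operator $T_X$ is surjective, so a solution $f\in X$ of $T_X(f)=g$ exists; concretely, $f_0:=\widehat{T}_X(g)$ is one such solution by Theorem \ref{theo-3}(b), and by the defining formula \eqref{T-hat} this $f_0$ is exactly $x\mapsto \frac{-1}{\sqrt{1-x^2}}T_X(\sqrt{1-t^2}g(t))(x)$, the first term in \eqref{E}. Any other solution $f\in X$ satisfies $T_X(f-f_0)=0$, i.e.\ $f-f_0\in\mathrm{Ker}(T_X)$, which by Theorem \ref{theo-3}(a) is precisely the one-dimensional span of $1/\sqrt{1-x^2}$. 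Hence $f-f_0=\frac{\lambda}{\sqrt{1-x^2}}$ for some $\lambda\in\mathbb{C}$, and conversely every such $f$ is a solution since $T_X(1/\sqrt{1-x^2})=0$ by \eqref{0}. This gives exactly the family \eqref{E}.

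For part (b), where $0<\underline{\alpha}_X\le\overline{\alpha}_X<1/2$, I would use Theorem \ref{theo-4}. The hypothesis $\int_{-1}^1 g(x)/\sqrt{1-x^2}\,dx=0$ says precisely that $Q_X(g)=0$, i.e.\ $g\in\mathrm{Ker}(Q_X)=R(T_X)$ by \eqref{C}, so the equation $T_X(f)=g$ is solvable in $X$. Uniqueness is immediate from Theorem \ref{theo-4}(a): $T_X$ is injective. To identify the solution explicitly, apply $\widecheck{T}_X$ to both sides and use $\widecheck{T}_XT_X=I_X$ from Theorem \ref{theo-4}(b) to get $f=\widecheck{T}_X(g)$; by the defining formula \eqref{T-check} this is $x\mapsto -\sqrt{1-x^2}\,T(g(t)/\sqrt{1-t^2})(x)$, which (since $\widecheck{T}_X$ agrees with $\widecheck{T}$ on $X$) equals $-\sqrt{1-x^2}\,T_X(g(t)/\sqrt{1-t^2})(x)$ as claimed. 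One should also verify the candidate really solves the equation: $T_X(\widecheck{T}_X(g))=(I_X-Q_X)(g)=g$ by Theorem \ref{theo-4}(c) together with $Q_X(g)=0$.

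There is no real obstacle here; the content is entirely in the earlier theorems. The only mild care needed is bookkeeping about which operator ($\widehat{T}_X$ versus $\widehat{T}$, and similarly for the check-versions) is being applied and the verification that the closed-form expressions in \eqref{E} and in (b) are well defined for the given $g$ — but this is exactly what the boundedness statements in Theorems \ref{theo-3}(b) and \ref{theo-4}(b) (via the weighted-$L^p$ bounds and Boyd interpolation) already guarantee. I would therefore present the proof as two short paragraphs, one invoking Theorem \ref{theo-3}(a)--(c) for part (a) and one invoking Theorem \ref{theo-4}(a)--(c) for part (b).
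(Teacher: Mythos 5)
Your proposal is correct and follows essentially the same route as the paper: both parts reduce to the identities and kernel/range descriptions in Theorems \ref{theo-3} and \ref{theo-4}. The only cosmetic difference is in the ``all solutions'' direction of (a), where you pass through $f-f_0\in\mathrm{Ker}(T_X)$ via Theorem \ref{theo-3}(a) while the paper instead applies the identity $\widehat{T}_XT_X=I_X-P_X$ from Theorem \ref{theo-3}(d); these are interchangeable one-line arguments.
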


\begin{proof}
(a) In this case $1/\sqrt{1-x^2}\in X$. Given any $\lambda\in\mathbb{C}$ define the function  
$$
f(x):=\frac{-1}{\sqrt{1-x^2}}\; T_X\left(\sqrt{1-t^2}g(t)\right) (x)
+ \frac{\lambda}{\sqrt{1-x^2}}
=\widehat{T}_X(g)(x)+\frac{\lambda}{\sqrt{1-x^2}}.
$$
Then the identities   $T_X\widehat{T}_X(g)=g$ and $T_X(\lambda/\sqrt{1-x^2})=0$ 
(see Theorem \ref{theo-3}) imply that $T_X(f)=g$.

Conversely, suppose that $f\in X$ satisfies   $T_X(f)=g$. 
It follows from $\widehat{T}_XT_X=I_X-P_X$ that 
$f-P_X(f)=\widehat{T}_X(g)$. By \eqref{3-d} there exists 
$\lambda\in\mathbb{C}$ such that $P_X(f)=\lambda/\sqrt{1-x^2}$
and hence, $f=\widehat{T}_X(g)+ \frac{\lambda}{\sqrt{1-x^2}}$. 
So, all solutions of the airfoil equation are indeed given by \eqref{E}.

(b) Define $f(x):=-\sqrt{1-x^2}\, T(g(t)/\sqrt{1-t^2})(x)=\widecheck{T}_X(g)$. 
By Theorem \ref{theo-4}(c) we have
$$
T_X(f)=T_X\widecheck{T}_X(g)=g-Q_X(g).
$$
But, the hypothesis on $g\in X$ implies, via \eqref{C}, 
that $g\in\text{Ker}(Q_X)$ and so $T_X(f)=g$. The uniqueness
of the solution $f$ is immediate as $T_X$ is injective (by Theorem \ref{theo-4}(a)).
\end{proof}

\begin{remark} 
The conditions 
$0<\underline{\alpha}_{X}\le \overline{\alpha}_{X}<1/2$
or $1/2<\underline{\alpha}_X\le \overline{\alpha}_X<1$ are not always satisfied, e.g.,
if $X=L^{2,q}$ with $1\le q\le\infty$. There also exist r.i.\ spaces $X$ such that 
$\underline{\alpha}_{X}<1/2< \overline{\alpha}_{X}$; see \cite[pp. 177--178]{bennett-sharpley}.
\end{remark}

%%%%%%%%%%%%%%%%%%%%%%%%%%%%%%%%%%%%%%

\section{Extension of the finite Hilbert transform on r.i.\ spaces}
\label{S4}

%%%%%%%%%%%%%%%%%%%%%%%%%%%%%%%%

  The finite Hilbert transform T$\colon L^1\to L^{1,\infty}$
has the property that $T(L^1)\not\subseteq L^1$. Hence, for any r.i.\
space $X$  we necessarily have $T(L^1)\not\subseteq X$. On the other hand,
if $X$ satisfies $0<\underline{\alpha}_X\le \overline{\alpha}_X<1$, then
$T(X)\subseteq X$ continuously. Do there exist any other B.f.s.'
$Z\subseteq L^1$ such that $X\subsetneqq Z$ and $T(Z)\subseteq X$?
As a  consequence of Theorems \ref{theo-3} and \ref{theo-4}, 
for those r.i.\ spaces $X$ satisfying
$1/2<\underline{\alpha}_X\le \overline{\alpha}_X<1$ or
$0<\underline{\alpha}_X\le \overline{\alpha}_X<1/2$,
the answer is shown to be negative; see Theorem \ref{theo-10}.

%%%%%%%%%%%%%%%%%%%%%%%%%%%%%%%%%%%%%%%%%

 The proof of the following result uses important facts from
the theory of vector measures, namely, a theorem of Talagrand  concerning 
$L^0$-valued measures and the Dieudonn\'e-Grothendieck Theorem for bounded 
vector measures.

\begin{proposition}\label{prop-7}
Let $X$ be a  r.i.\ space  satisfying
$0<\underline{\alpha}_X\le \overline{\alpha}_X<1$. 
Let $f\in L^1$. The following conditions are equivalent.
\begin{itemize}
\item[(a)] $T(f\chi_A)\in X$ for every $A\in\mathcal{B}$.
\item[(b)] $\displaystyle \sup_{A\in\mathcal{B}}\|T(f\chi_A)\|_X<\infty.$
\item[(c)] $T(h)\in X$ for every $h\in L^0$ with $|h|\le |f|$ a.e.
\item[(d)] $\displaystyle \sup_{|h|\le|f|}\|T(h)\|_X<\infty.$
\item[(e)] $T(\theta f)\in X$ for every $\theta\in L^\infty$ with $|\theta|=1$ a.e.
\item[(f)] $\displaystyle \sup_{|\theta|=1}\|T(\theta f)\|_X<\infty.$
\item[(g)] $fT_{X'}(g)\in L^1$ for every $g\in X'$. 
\end{itemize}
Moreover, if any one of $\mathrm{(a)}$-$\mathrm{(g)}$ is satisfied, then
\begin{equation}\label{norms}
\sup_{A\in\mathcal{B}}\big\|T(\chi_A f)\big\|_X
\le  
\sup_{|\theta|=1}\big\|T(\theta f)\big\|_X
\le 
\sup_{|h|\le|f|}\big\|T(h)\big\|_X
\le  
4 \sup_{A\in\mathcal{B}}\big\|T(\chi_A f)\big\|_X.
\end{equation}
\end{proposition}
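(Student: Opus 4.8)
The plan is to establish the chain of implications
$\mathrm{(c)}\Rightarrow\mathrm{(a)}\Rightarrow\mathrm{(g)}\Rightarrow\mathrm{(b)}\Rightarrow\mathrm{(f)}\Rightarrow\mathrm{(d)}\Rightarrow\mathrm{(c)}$,
picking up $\mathrm{(e)}$ along the way via $\mathrm{(f)}\Leftrightarrow\mathrm{(e)}$ and the analogous equivalence $\mathrm{(c)}\Leftrightarrow\mathrm{(d)}$, and then reading the quantitative estimate \eqref{norms} off the easy implications. The trivial implications are $\mathrm{(c)}\Rightarrow\mathrm{(a)}$ (take $h=\chi_Af$), $\mathrm{(d)}\Rightarrow\mathrm{(b)}\Rightarrow\mathrm{(f)}$ (each supremum is over a subfamily of the previous one, after noting $\chi_A=\frac12(1+\theta)$ with $\theta=2\chi_A-1$, $|\theta|=1$, so that $T(\chi_Af)=\frac12 T(f)+\frac12 T(\theta f)$ gives the factor-controlled comparison), and similarly $\mathrm{(f)}\Rightarrow\mathrm{(d)}$ by writing a general $h$ with $|h|\le|f|$ as an average of unimodular multiples of $f$ (e.g.\ $h=\varphi f$ with $|\varphi|\le1$, and $\varphi=\frac12(\theta_1+\theta_2)$ with $|\theta_i|=1$), which also yields the constant $4$ in \eqref{norms} after accounting for the boundedness "for free" directions. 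So the real content is in the two implications $\mathrm{(a)}\Rightarrow\mathrm{(g)}$ and $\mathrm{(g)}\Rightarrow\mathrm{(b)}$ (equivalently the uniform boundedness hidden in (a)), together with $\mathrm{(f)}\Rightarrow\mathrm{(e)}$ and the closing $\mathrm{(b)}\Rightarrow\mathrm{(f)}$ — but all the "pointwise to uniform" jumps are where the vector-measure machinery enters.

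For $\mathrm{(a)}\Rightarrow\mathrm{(g)}$, fix $g\in X'$. For every $A\in\mathcal B$ we have $T(f\chi_A)\in X$, hence by H\"older $gT(f\chi_A)\in L^1$; and by Proposition \ref{prop-2}(a) (which applies because $f\chi_A\in L^1$ and $f\chi_A\cdot T_{X'}(h)\in L^1$ for all $h\in X'$ is inherited from the (a)-hypothesis once we know it — more precisely, one first checks the hypothesis of Prop.\ \ref{prop-2}(a) for $f$ itself) we get $\int_{-1}^1 f\chi_A\, T_{X'}(g) = -\int_{-1}^1 g\,T(f\chi_A)$. The point is to show $fT_{X'}(g)\in L^1$. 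The natural route is to consider the set function $A\mapsto \int_A f T_{X'}(g) =-\int_{-1}^1 gT(f\chi_A)$ and argue it is a well-defined complex measure; its total variation being finite is exactly integrability of $fT_{X'}(g)$. Here the Dieudonn\'e--Grothendieck theorem is invoked: the additive set function $A\mapsto T(f\chi_A)\in X$ is, under hypothesis (a), finitely additive and $X$-valued on each $A$, and one shows it is strongly additive / bounded, whence $A\mapsto \int_{-1}^1 gT(f\chi_A)$ is a bounded scalar measure, giving (g) with a bound on $\|fT_{X'}(g)\|_{L^1}$ in terms of $\sup_A\|T(f\chi_A)\|_X$. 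This already packages $\mathrm{(a)}\Rightarrow\mathrm{(b)}$ as well, since the Dieudonn\'e--Grothendieck theorem upgrades pointwise-in-$A$ membership to a uniform bound.

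The hardest step — and the one requiring Talagrand's theorem on $L^0$-valued measures — is the closing implication, getting back from the scalar/weak information in (g) to $\mathrm{(b)}$ (or $\mathrm{(f)}$, or $\mathrm{(c)}$ with its uniform bound). The map $A\mapsto T(f\chi_A)$ is a priori only $L^0$-valued (via Kolmogorov's theorem $T\colon L^1\to L^{1,\infty}\subseteq L^0$ is continuous, so this is a genuine $L^0$-valued finitely additive set function, in fact $\sigma$-additive in $L^0$ because $f\chi_A\to f\chi_{\cup A_n}$ in $L^1$). Condition (g) — via the Parseval identity of Proposition \ref{prop-2}(a) — says that this $L^0$-valued measure is "weakly in $X$": pairing against each $g\in X'$ gives a scalar measure, i.e.\ $\langle g, T(f\chi_{(\cdot)})\rangle$ is countably additive and of bounded variation for every $g$ in the norming subspace $X'$ of $X^*$. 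Talagrand's theorem is precisely the tool that promotes such an $L^0$-valued measure, which is scalarly $X$-valued, to being genuinely $X$-valued with bounded semivariation; concretely it forces $\sup_A\|T(f\chi_A)\|_X<\infty$, which is (b). I expect the main obstacle to be setting up the hypotheses of Talagrand's theorem cleanly — verifying the $\sigma$-additivity of $A\mapsto T(f\chi_A)$ in $L^0$, identifying the correct norming subspace and the scalar boundedness, and then transferring the conclusion to the stated constant — rather than any computation; once (b) is in hand, (f) follows by restricting the supremum, (e) follows from (f) together with $\mathrm{(c)}\Rightarrow\mathrm{(a)}$-type averaging (a countable dense selection of $\theta$'s plus Fatou, or directly the averaging identity $T(\theta f)=2T(\chi_Af)-T(f)$ with $\theta=2\chi_A-1$), and (c)–(d) follow by the averaging of $\theta$'s described above, closing the cycle and simultaneously yielding \eqref{norms}.
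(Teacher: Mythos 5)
Your easy implications are fine, and your averaging tricks (writing $\chi_A=\tfrac12(1+\theta)$ with $\theta=2\chi_A-1$, and $\varphi=\tfrac12(\theta_1+\theta_2)$ with $|\theta_i|=1$ for $|\varphi|\le1$) are a clean way to compare the three suprema; the paper instead gets the middle inequality and the constant $4$ by approximating $\varphi$ uniformly by simple functions and invoking the semivariation estimate $\|\nu\|(A)\le 4\sup_{B\subseteq A}\|\nu(B)\|_X$ from Diestel--Uhl. Your $\mathrm{(d)}\Rightarrow\mathrm{(g)}$ and $\mathrm{(g)}\Rightarrow\mathrm{(a)}$ directions also match the paper (monotone convergence plus Parseval one way, Proposition \ref{prop-2}(a) plus $X''=X$ the other).

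The genuine gap is in your treatment of the hard step. Your $\mathrm{(a)}\Rightarrow\mathrm{(g)}$ is circular: the hypothesis of Proposition \ref{prop-2}(a) for $f$ is \emph{exactly} condition (g), so you cannot "first check the hypothesis of Prop.\ \ref{prop-2}(a) for $f$ itself" from (a). And your appeal to Dieudonn\'e--Grothendieck at that point ("one shows it is strongly additive / bounded") is precisely the missing content: from (a) alone you only know that $\nu\colon A\mapsto T(f\chi_A)$ is finitely additive with values in $X$ and $\sigma$-additive into $L^0$; you have exhibited \emph{no} total subset of $X^*$ on which the scalar set functions $\langle\nu(\cdot),x^*\rangle$ are known to be bounded (or even $\sigma$-additive), since $\sigma$-additivity in measure does not survive integration against $g\in X'$. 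This is exactly where the paper uses Talagrand: factor $J_X\circ\nu=\Psi\cdot\mu$ with $\mu$ a genuinely $\sigma$-additive $L^2$-valued measure, and then the total family $\{\chi_{B_n\cap B}/\Psi\}\subseteq L^\infty\subseteq X'$ pairs with $\nu$ to give $\langle\mu(\cdot),\chi_{B_n\cap B}\rangle$, which are bounded scalar measures --- only then does Dieudonn\'e--Grothendieck yield (b). By contrast, you deploy Talagrand in $\mathrm{(g)}\Rightarrow\mathrm{(b)}$, where it is not needed: once (g) holds, Proposition \ref{prop-2}(a) applied to each $f\chi_A$ gives $\langle T(f\chi_A),g\rangle=-\int_A fT_{X'}(g)$, hence $\sup_A|\langle T(f\chi_A),g\rangle|\le\|fT_{X'}(g)\|_{L^1}<\infty$ for every $g$ in the norming space $X'$, and uniform boundedness finishes. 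So the tools you name are the right ones, but the implication that actually requires Talagrand is left unproved, and the one you spend Talagrand on is elementary.
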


\begin{proof}
(a)$\Rightarrow$(b). Consider the $X$-valued, finitely additive measure
\begin{equation}\label{nu}
\nu\colon A\mapsto T(f\chi_A),\quad A\in\mathcal {B}.
\end{equation}
Let $J_X\colon X\to L^0$ denote the natural continuous 
linear embedding. Then the composition 
$J_X\circ \nu\colon\mathcal{B}\to L^0$ is $\sigma$-additive. 
To establish this let $A_n\downarrow\emptyset$ in $\mathcal{B}$. 
Then $\lim_nf\chi_{A_n}=0$ in $L^1$ and hence,
$\lim_nT(f\chi_{A_n})=0$ in $L^{1,\infty}$ by Kolmogorov's Theorem.
Since $L^{1,\infty}\subseteq L^0$ continuously, we also have 
$\lim_nT(f\chi_{A_n})=0$ in $L^0$. Consequently, $\lim_n(J_X\circ\nu)({A_n})=0$ in $L^0$
which verifies the $\sigma$-additivity of $J_X\circ\nu$.

It follows from a result of Talagrand, \cite[Theorem B]{talagrand}, that there exists 
a non-negative function $\Psi_0\in L^0$ and a $\sigma$-additive vector measure
$\mu_0\colon\mathcal{B}\to L^2$ such that
$$
(J_X\circ\nu)({A})=\Psi_0\cdot \mu_0(A),\quad A\in\mathcal{B},
$$
where $\Psi_0\cdot \mu_0(A)$ is the pointwise product of two functions in $L^0$. Define 
$B_0:=\Psi_0^{-1}(\{0\})$. Then $\Psi:=\Psi_0+\chi_{B_0}\in L^0$ is strictly positive.
Consider the $L^2$-valued vector measure
$$
\mu\colon A\mapsto \chi_{(-1,1)\setminus B_0}\cdot \mu_0(A),\quad A\in\mathcal{B}.
$$
For every $A\in\mathcal{B}$, we claim that $(J_X\circ\nu)({A})=\Psi\cdot\mu(A)$. 
This follows from
\begin{align*}
\Psi\cdot\mu(A)&=(\Psi_0+\chi_{B_0})\cdot\chi_{(-1,1)\setminus B_0}\cdot \mu_0(A)
\\
&=
\chi_{(-1,1)\setminus B_0}\cdot \Psi_0\cdot \mu_0(A)
\\
&=
\chi_{(-1,1)\setminus B_0}\cdot (J_X\circ\nu)({A}) + \chi_{ B_0}\cdot (J_X\circ\nu)({A}) 
\\
&=
 (J_X\circ\nu)({A}),
\end{align*} 
where we have used $\chi_{ B_0}\cdot (J_X\circ\nu)({A}) 
=\chi_{ B_0}\cdot \Psi_0\cdot \mu({A})=0$.

Set $B_n:=\{x\in(-1,1): (n-1)<1/\Psi(x)\le n\}$, for $n\in\mathbb{N}$. Then the subset
\begin{equation}\label{c2}
\big\{\chi_{B_n\cap B}/\Psi:n\in\mathbb{N}, B\in\mathcal{B}\big\}
\end{equation}
of $L^\infty\subseteq X'\subseteq X^*$ is total for $X$. To verify this, let $g\in X$ satisfy
$$
\int_{-1}^1g(x)\chi_{B_n\cap B}(x)/\Psi(x)\,dx=0,\quad n\in\mathbb{N}, B\in\mathcal{B}.
$$
Then, for every $n\in\mathbb{N}$, the function 
$(g\chi_{B_n}/\Psi)\in X\subseteq L^1$ is 0 a.e. 
Since $1/\Psi$ is strictly positive on $(-1,1)=\cup_{n=1}^\infty B_n$, we have $g=0$ a.e. 
This implies that the subset \eqref{c2} of $X^*$ is total for $X$.

Fix $n\in\mathbb{N}$ and $B\in\mathcal{B}$. Then the scalar-valued set function
$A\mapsto\langle\nu(A),\chi_{B_n\cap B}/\Psi\rangle$, 
for $A\in\mathcal{B}$, is $\sigma$-additive.
Indeed, as $\nu(A)\in X$ and $(\chi_{B_n\cap B}/\Psi)\in L^\infty\subseteq X'$ 
we have, for each $A\in\mathcal{B}$, that
\begin{align*}
\langle\nu(A),\chi_{B_n\cap B}/\Psi\rangle &= 
\int_{-1}^1\nu(A)(x)\chi_{B_n\cap B}(x)/\Psi(x)\,dx
\\
&=
 \int_{-1}^1\mu(A)(x)\chi_{B_n\cap B}(x)\,dx
%\\
%&=
=\langle\mu(A),\chi_{B_n\cap B}\rangle,
\end{align*}
which implies the desired $\sigma$-additivity because 
$\mu$ is $\sigma$-additive as an $L^2$-valued vector measure
and $\chi_{B_n\cap B}\in L^2$.
Consequently, each $\mathbb{C}$-valued, $\sigma$-additive measure 
$A\mapsto\langle\nu(A),\chi_{B_n\cap B}/\Psi\rangle$ on $\mathcal{B}$, for
$n\in\mathbb{N}$, has bounded range. Recalling 
that the subset \eqref{c2} of $X^*$ is total for $X$,
the Dieudonn\'e-Grothendieck Theorem, \cite[Corollary I.3.3]{diestel-uhl},
implies that $\nu$ has bounded range in $X$. Hence, (b) is established.

(b)$\Rightarrow$(c). The \textit{semivariation} $\|\nu\|(\cdot)$ of the 
bounded, finitely additive, $X$-valued measure $\nu$ defined in \eqref{nu} satisfies both
$$
\|\nu\|(A)=\sup\big\{\|T(\chi_A fs)\|_X: s\in\text{sim }\mathcal{B},\; 
|s|\le1\big\},\quad A\in\mathcal{B},
$$
and
$$
\sup_{B\in \mathcal{B}, B\subseteq A}\|\nu(B)\|_X\le \|\nu\|(A)
\le 4 \sup_{B\in\mathcal{B}, B\subseteq A}\|\nu(B)\|_X,\quad A\in\mathcal{B},
$$
\cite[p.2 and Proposition I.1.11]{diestel-uhl}. Thus, for 
$s\in\text{sim }\mathcal{B}$ with $s\not=0$,
\begin{equation}\label{c3}
\|T(fs)\|_X\le \Big(4\sup_{A\in\mathcal{B}}\|T(f\chi_A)\|_X\Big)\cdot 
\sup_{|x|<1}|s(x)|<\infty
\end{equation}
because $|s|\le\sup_{|x|<1}|s(x)|$ pointwise on $(-1,1)$, \cite[p.6]{diestel-uhl}. To obtain 
(c) from \eqref{c3}, take any $h\in L^0$ with $|h|\le|f|$ a.e. Then $h=f\varphi$ for some 
$\varphi\in L^0$ with $|\varphi|\le1$ a.e. Select a sequence 
$\{s_n\}_{n=1}^\infty\subseteq \text{sim }\mathcal{B}$ such that 
$|s_n|\le |\varphi|$ on $(-1,1)$ for all $n\in\mathbb{N}$
and $s_n\to\varphi$ uniformly on $(-1,1)$ 
as $n\to\infty$. Then the sequence $\{T(fs_n)\}_{n=1}^\infty$ is 
Cauchy in $X$ as \eqref{c3} yields
$$
\|T(fs_j)-T(fs_k)\|_X \le \Big(4\sup_{A\in\mathcal{B}}\|T(f\chi_A)\|_X\Big)\cdot 
\sup_{|x|<1}|s_j(x)-s_k(x)|
$$
for all $j,k\in\mathbb{N}$. Accordingly, 
$\{T(fs_n)\}_{n=1}^\infty$  has a limit in $X$, say $g$.
Since the natural inclusion $X\subseteq L^{1,\infty}$ is continuous, we have
$\lim_nT(fs_n)=g$ in $L^{1,\infty}$. On the other hand, since
$\lim_nfs_n=f\varphi$ in $L^1$, Kolmogorov's Theorem gives   $\lim_n T(fs_n)=T(f\varphi)$ in 
$L^{1,\infty}$.  Thus, $T(h)=T(f\varphi)=g$ as elements of $L^0$. 
In particular, $T(h)\in X$ as $g\in  X$. So, (c) is established.

(c)$\Rightarrow$(d). Clearly (c)$\Rightarrow$(a) and we already know that 
(a)$\Rightarrow$(b). Thus, the previous 
arguments also imply the inequality
\begin{equation}\label{c4}
\sup_{|h|\le|f|}\|T(h)\|_X
\le 4 \sup_{A\in\mathcal{B}}\|T(f\chi_A)\|_X.
\end{equation}
To see this consider any $h\in L^0$ with $|h|\le|f|$ a.e. Select $\varphi$ and 
$\{s_n\}_{n=1}^\infty\subseteq \text{sim }\mathcal{B}$ as in the previous paragraph.
Then \eqref{c3} yields  
\begin{align*}
\|T(h)\|_X &= \lim_{n}\|T(fs_n)\|_X
\\
&\le 
\Big(4 \sup_{A\in\mathcal{B}}\|T(f\chi_A)\|_X\Big) 
\sup_{n\in\mathbb{N}} \sup_{|x|<1}|s_n(x)|
\\
& = 
\Big(4 \sup_{A\in\mathcal{B}}\|T(f\chi_A)\|_X\Big)  \sup_{|x|<1}|\varphi(x)|
\\
&\le 
4 \sup_{A\in\mathcal{B}}\|T(f\chi_A)\|_X  .
\end{align*}

(d)$\Rightarrow$(f)$\Rightarrow$(e) Clear.

(e)$\Rightarrow$(a) Fix $A\in\mathcal{B}$. Since 
$|\chi_A\pm\chi_{(-1,1)\setminus A}|=1$ it follows from
(e) that both
$$
T(f\chi_A)+T(f\chi_{(-1,1)\setminus A})=T(f(\chi_A+\chi_{(-1,1)\setminus A}))\in X
$$
and 
$$
T(f\chi_A)-T(f\chi_{(-1,1)\setminus A})=T(f(\chi_A-\chi_{(-1,1)\setminus A}))\in X.
$$
These two identities imply that $T(f\chi_A)\in X$.

(d)$\Rightarrow$(g). Fix $g\in X'$. Given $n\in\N$ define $A_n:=|f|^{-1}([0,n])$
and set $f_n:=f\chi_{A_n}\in L^\infty\subseteq X$. Since $|f_n|\uparrow |f|$ pointwise
on $(-1,1)$, the Monotone Convergence Theorem yields
\begin{equation}\label{dg}
\int_{-1}^1|f(x)|\cdot |(T_{X'}(g))(x)|\,dx=\lim_n
\int_{-1}^1|f_n(x)|\cdot |(T_{X'}(g))(x)|\,dx.
\end{equation}
Select $\theta_1, \theta_2\in L^\infty$ with $|\theta_1|=1$ and $|\theta_2|=1$ pointwise
such that $|f|=\theta_1 f$ and $|T_{X'}(g)|=\theta_2 T_{X'}(g)$ pointwise.
In particular, $|f_n|=\theta_1 f_n$ pointwise for all $n\in\N$. Then Parseval's
formula (cf. Proposition \ref{prop-2}(b)), H\"older's inequality and condition
(d) ensure, for every $n\in\N$, that
\begin{align*}
\int_{-1}^1|f_n(x)|\cdot |(T_{X'}(g))(x)|\,dx
&=
\int_{-1}^1\theta_1(x)\theta_2(x)f_n(x)(T_{X'}(g))(x)\,dx
\\ &=
- \int_{-1}^1(T_{X}(\theta_1\theta_2f_n))(x)g(x)\,dx
\\ & \le
\|T_{X}(\theta_1\theta_2f_n)\|_X \|g\|_{X'}
\\ & \le 
\sup_{|h|\le|f|}\|T(h)\|_X \|g\|_{X'} <\infty.
\end{align*}
This inequality and \eqref{dg} imply that (g) holds.

(g)$\Rightarrow$(a). Fix any $A\in\mathcal{B}$. 
Then $(f\chi_A)T_{X'}(g)\in L^1$ for every
$g\in X'$ by  assumption. Apply Proposition 
\ref{prop-2}(a) to $f\chi_A$ in place of $f$
to obtain that $gT(f\chi_A)\in L^1$ for all $g\in X'$. Accordingly, 
$T(f\chi_A)\in X''=X$, which establishes (a).

The equivalences (a)-(g) are thereby established.

Suppose now that any one of (a)-(g) is satisfied. The second 
inequality of \eqref{norms} is clear. For the left-hand inequality 
fix $A\in\mathcal{B}$. Then $T(f\chi_A)=
1/2(T(\theta_1 f)+T(\theta_2 f))$, where
$\theta_1=1$ and $\theta_2=\chi_A-\chi_{(-1,1)\setminus A}$ satisfy
$|\theta_1|=1$ and $|\theta_2|=1$. Accordingly,
$$
\|T(f\chi_A)\|_X\le 1/2(\|T(\theta_1 f)\|_X+\|T(\theta_2 f)\|_X)
\le 
\sup_{|\theta|=1}\|T(\theta f)\|_X.
$$
Finally, the last inequality in \eqref{norms} is precisely \eqref{c4}  above.
\end{proof}

%%%%%%%%%%%%%

  Another  consequence of Theorems \ref{theo-3} and \ref{theo-4} 
is that membership of a given r.i.\ space $X$ is completely 
determined by the finite Hilbert transform in $X$.

\begin{proposition}\label{cor-8}
Let $X$ be a  r.i.\ space    satisfying  either 
$1/2<\underline{\alpha}_X\le \overline{\alpha}_X<1$ or 
$0<\underline{\alpha}_X\le \overline{\alpha}_X<1/2$. 
Let $f\in L^1$. The following conditions are equivalent.
\begin{itemize}
\item[(a)] $f\in X$.
\item[(b)] $T(f\chi_A)\in X$ for every $A\in\mathcal{B}$.
\item[(c)] $T(f\theta)\in X$ for every  $\theta\in L^\infty$ with $|\theta|=1$ a.e.
\item[(d)] $T(h)\in X$ for every $h\in L^0$ with $|h|\le |f|$ a.e.
\end{itemize}
\end{proposition}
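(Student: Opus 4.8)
The plan is to prove Proposition \ref{cor-8} by reducing it to Proposition \ref{prop-7}, whose equivalent conditions (a)--(g) already capture exactly the conditions (b)--(d) appearing here (note (b), (c), (d) of Proposition \ref{cor-8} are precisely (a), (e), (c) of Proposition \ref{prop-7}). Since (a)$\Rightarrow$(b) is trivial --- if $f\in X$ then $f\chi_A\in X$ by the ideal property and $T(X)\subseteq X$ boundedly because $0<\underline{\alpha}_X\le\overline{\alpha}_X<1$ --- and since Proposition \ref{prop-7} already gives (b)$\Leftrightarrow$(c)$\Leftrightarrow$(d), the whole content is the implication (b)$\Rightarrow$(a). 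The natural route is via condition (g) of Proposition \ref{prop-7}: assuming (b), we get that $fT_{X'}(g)\in L^1$ for every $g\in X'$, and we must upgrade this to $f\in X=X''$. The obstacle is that $\{T_{X'}(g):g\in X'\}$ need not be all of $X'$, so we cannot immediately conclude $f\in X''$ from the definition of the associate space; this is exactly where the inversion results of Section \ref{S3} enter.

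First I would treat the case $1/2<\underline{\alpha}_X\le\overline{\alpha}_X<1$. Then $X'$ satisfies $0<\underline{\alpha}_{X'}\le\overline{\alpha}_{X'}<1/2$, so Theorem \ref{theo-4} applies to $X'$ in place of $X$. By Theorem \ref{theo-4}(d), $R(T_{X'})=\{h\in X':\int_{-1}^1 h(x)/\sqrt{1-x^2}\,dx=0\}=\mathrm{Ker}(Q_{X'})$, which has codimension one in $X'$, with complement $\langle\mathbf{1}\rangle$. So every $g\in X'$ can be written $g=T_{X'}(\widecheck{T}_{X'}(g))+Q_{X'}(g)$, i.e. as an element of $R(T_{X'})$ plus a scalar multiple of $\mathbf{1}$. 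Now assuming (b), Proposition \ref{prop-7}(g) gives $fT_{X'}(g)\in L^1$ for all $g\in X'$, hence $f\cdot h\in L^1$ for every $h\in R(T_{X'})$; and $f\cdot\mathbf{1}=f\in L^1$ by hypothesis. Since $X'=R(T_{X'})+\langle\mathbf{1}\rangle$, it follows that $fh\in L^1$ for every $h\in X'$, which by definition of the second associate space means $f\in X''=X$.

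For the case $0<\underline{\alpha}_X\le\overline{\alpha}_X<1/2$, the argument is symmetric: now $X'$ satisfies $1/2<\underline{\alpha}_{X'}\le\overline{\alpha}_{X'}<1$, so Theorem \ref{theo-3} applies to $X'$. By Theorem \ref{theo-3}(c) the operator $T_{X'}\colon X'\to X'$ is \emph{surjective}, so $\{T_{X'}(g):g\in X'\}=X'$. Then Proposition \ref{prop-7}(g), which follows from (b), gives directly that $fh\in L^1$ for every $h\in X'$, whence $f\in X''=X$. The remaining equivalences (b)$\Leftrightarrow$(c)$\Leftrightarrow$(d) are simply quoted from Proposition \ref{prop-7}. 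The only point requiring care --- and the main obstacle --- is the decomposition step in the first case: one must be sure that $\widecheck{T}_{X'}(g)$ genuinely lands in $R(T_{X'})$ and that the scalar multiple of $\mathbf{1}$ is absorbed by the hypothesis $f\in L^1$; both are immediate from Theorem \ref{theo-4}, but they are the precise places where the special structure of $T$ (its one-dimensional kernel/cokernel governed by $1/\sqrt{1-x^2}$ and $\mathbf{1}$) is used, so I would state them explicitly.
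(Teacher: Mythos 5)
Your proposal is correct and follows essentially the same route as the paper: the equivalences (b)$\Leftrightarrow$(c)$\Leftrightarrow$(d) are quoted from Proposition \ref{prop-7}, and (b)$\Rightarrow$(a) is obtained from condition (g) of that proposition together with the decomposition $\psi=T_{X'}(\widecheck{T}_{X'}(\psi))+Q_{X'}(\psi)$ from Theorem \ref{theo-4}(c) (applied to $X'$) in the first case, and the surjectivity of $T_{X'}$ from Theorem \ref{theo-3}(c) in the second. Your identification of the only delicate point --- that $R(T_{X'})$ need not be all of $X'$ and the multiple of $\mathbf{1}$ must be absorbed by $f\in L^1$ --- is exactly how the paper handles it.
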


\begin{proof}
The three conditions (b), (c) and (d) are equivalent by Proposition \ref{prop-7}.

(a)$\Rightarrow$(b). Clear as $T\colon X\to X$ is bounded.

(b)$\Rightarrow$(a). By Proposition 4.1 we have $fT_{X'}(g)\in L^1$ for
every $g\in X'$, which we shall use to obtain (a).

Assume that $1/2<\underline{\alpha}_X\le\overline{\alpha}_X<1$, in which case
$0<\underline{\alpha}_{X'}\le\overline{\alpha}_{X'}<1/2$.
This enables us to apply Theorem \ref{theo-4}(c), 
with $X'$ in place of $X$, to the operator $T_{X'}$. So, for any $\psi\in X'$,
it follows that $\psi=T_{X'}(\widecheck{T}_{X'}(\psi))+c\mathbf{1}$ with
$c:=(1/\pi)\int_{-1}^1(\psi(x)/\sqrt{1-x^2})\,dx$. 
Define $g:=\widecheck{T}_{X'}(\psi)\in X'$. Then $fT_{X'}(\widecheck{T}_{X'}(\psi))\in L^1$
and hence, $f\psi-cf=fT_{X'}(\widecheck{T}_{X'}(\psi))$ belongs to $L^1$.
But, $cf\in L^1$ as $f\in L^1$ by assumption. So, $f\psi\in L^1$, 
from which it follows that   $f\in X''=X$ as $\psi\in X'$ is arbitrary. 
Thus (a) holds.

Consider  the remaining case when $0<\underline{\alpha}_X\le\overline{\alpha}_X<1/2$. 
Then $1/2<\underline{\alpha}_{X'}\le\overline{\alpha}_{X'}<1$. 
We  apply Theorem \ref{theo-3}(c) with $X'$ in place of $X$, to conclude that 
$T_{X'}\colon X'\to X'$ is surjective. So, 
given any $\psi\in X'$ there exists $g\in X'$ with $\psi=T_{X'}(g)$.  
It follows that $f\psi=fT_{X'}(g)\in L^1$. 
Since $\psi\in X'$ is arbitrary we may conclude that 
  $f\in X''=X$. Hence, (a) again holds.   
\end{proof}

%%%%%%%%%%%%%%%%%%%%%%%%%%%%%%%%%%%%%

Even though $T_X$ is not an isomorphism, 
Theorems \ref{theo-3} and \ref{theo-4} 
imply the impossibility of extending (continuously) the finite Hilbert transform 
$T_X\colon X\to X$ to any genuinely  larger domain space within $L^1$
while still maintaining its values in $X$;
see  Theorem \ref{theo-10} below. This is in contrast to the situation for the Fourier
transform operator acting in the spaces $L^p(\mathbb{T})$, $1<p<2$; see the Introduction.

We first require an important  technical construction. Define
\begin{equation*}\label{tx}
[T,X]:=\big\{f\in L^1: T(h)\in X, \;\forall |h|\le|f|\big\}.
\end{equation*}
If $f\in[T,X]$, then $f\in L^1$ and $T(h)\in X$ for 
every $h\in L^0$ with $|h|\le|f|$. Hence,  Proposition \ref{prop-7} implies 
that 
\begin{equation}\label{TX-norm}
\|f\|_{[T,X]}:=\sup_{|h|\le|f|} \|T(h)\|_X<\infty,\quad f\in[T,X].
\end{equation}
The properties of $[T,X]$ are established via a series of steps,
with the aim of showing that it is a B.f.s.

First, the functional $f\mapsto \|f\|_{[T,X]}$ is compatible with the 
lattice structure in the following sense:
if $f_1, f_2\in [T,X]$ satisfy $|f_1|\le|f_2|$, then 
$ \|f_1\|_{[T,X]}\le \|f_2\|_{[T,X]}$. This is because
 $\{h:|h|\le|f_1|\}\subseteq \{h:|h|\le|f_2|\}$. The same argument shows that $[T,X]$
is an ideal in $L^1$. In particular, $X\subseteq[T,X]$.

It is routine to verify that if $\alpha\in\mathbb{C}$ and $f\in[T,X]$, then 
$\alpha f\in[T,X]$ and $\|\alpha f\|_{[T,X]}=|\alpha |\cdot \|f\|_{[T,X]}$.

To verify the subadditivity of $\|\cdot\|_{[T,X]}$ 
we use the following Freudenthal type decomposition:  if
$h, f_1, f_2\in L^1$ with $|h|\le |f_1+f_2|$,
then  there exists $h_1, h_2$ such that $h=h_1+h_2$ and $|h_1|\le |f_1|, |h_2|\le |f_2|$;
this follows from \cite[Theorem 91.3]{zaanen} applied in $L^1$. Using this fact, given
$f_1,f_2\in [T,X]$, it follows that $f_1+f_2\in[T,X]$ and
\begin{align*}
\|f_1+f_2\|_{[T,X]} &= \sup\Big\{\|T(h)\|_X:|h|\le|f_1+f_2|\Big\}
\\
&=  \sup\Big\{\|T(h_1)+T(h_2)\|_X:|h|\le| f_1+f_2|, h=h_1+h_2, |h_i|\le|f_i|\Big\}
\\
&\le  \sup\Big\{\|T(h_1)\|_X:|h_1|\le|f_1|\Big\}
+  \sup\Big\{\|T(h_2)\|_X:|h_2|\le|f_2|\Big\}
\\
&=  \| f_1\|_{[T,X]}+ \| f_2\|_{[T,X]}.
\end{align*}
So, $[T,X]$ is a vector space and $\|\cdot\|_{[T,X]}$ is a lattice seminorm on $[T,X]$.

Let $\|f\|_{[T,X]}=0$.  Then $T(h)=0$ in $X$ for every $h\in L^0$ with 
$|h|\le|f|$. Suppose that
$f\not=0$. Then there exists $A\in\mathcal{B}$ with $|A|>0$ 
such that $f\chi_A\in L^\infty$ and
$f(x)\chi_A(x)\not=0$ for every $x\in A$. Choose two disjoint 
sets $A_1, A_2\in\mathcal{B}\cap A$
with $|A_j|>0$, $j=1,2$, and define $h_j:=f\chi_{A_j}$, $j=1,2$. 
Then $h_j\in L^\infty\subseteq X$ 
satisfies $|h_j|\le|f|$ and so $T_X(h_j)=T(h_j)=0$ for $j=1,2$. 
That is, $h_1,h_2\in \text{Ker}(T_X)$. Since $h_1, h_2$ are 
linearly independent elements in $X$,
it follows that $\text{dim}( \text{Ker}(T_X))\ge2$. But, this contradicts 
the fact that $T_X$ is either injective or its kernel is 1-dimensional; 
see the discussion after \eqref{0}. Hence, $f=0$.
So, we have shown that $[T,X]$ is a normed function space.

The following result is a Parseval type formula that will be
needed in the sequel.

\begin{lemma}\label{NEW-4.3}
Let $X$ be a r.i.\ space satisfying 
$0<\underline{\alpha}_X\le \overline{\alpha}_X<1$. Then
$$
\int_{-1}^1fT_{X'}(g)=-\int_{-1}^1gT(f),\quad f\in[T,X],\;\; g\in X'.
$$
\end{lemma}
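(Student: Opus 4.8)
The plan is to deduce this Parseval-type identity directly from the two results already established, namely Proposition \ref{prop-2}(a) and the list of equivalences in Proposition \ref{prop-7}; no new estimate is needed. The key observation is that the defining condition for membership in $[T,X]$ is \emph{verbatim} condition (c) of Proposition \ref{prop-7}: a function $f\in L^1$ lies in $[T,X]$ precisely when $T(h)\in X$ for every $h\in L^0$ with $|h|\le|f|$ a.e.

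So first I would fix $f\in[T,X]$ (in particular $f\in L^1$) and apply Proposition \ref{prop-7}, which is legitimate since $X$ satisfies its hypothesis $0<\underline{\alpha}_X\le\overline{\alpha}_X<1$. This lets me pass from condition (c) to condition (g), i.e.\ $fT_{X'}(g)\in L^1$ for every $g\in X'$. But this is exactly the hypothesis required to invoke Proposition \ref{prop-2}(a) for the function $f$. Applying it yields, for every $g\in X'$, both that $gT(f)\in L^1$ and that $\int_{-1}^1 fT_{X'}(g)=-\int_{-1}^1 gT(f)$, which is precisely the assertion of the lemma.

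The only point meriting a word of care — and it is not really an obstacle — is that $T_{X'}$ is well defined and bounded on $X'$: this is guaranteed because the Boyd-index hypothesis on $X$ transfers to $X'$ via $\underline{\alpha}_{X'}=1-\overline{\alpha}_X$ and $\overline{\alpha}_{X'}=1-\underline{\alpha}_X$, so that $0<\underline{\alpha}_{X'}\le\overline{\alpha}_{X'}<1$ and hence $T_{X'}\colon X'\to X'$ is bounded (as recalled before Proposition \ref{prop-2}). Beyond matching up the wording of condition (g) of Proposition \ref{prop-7} with the hypothesis of Proposition \ref{prop-2}(a), there is nothing further to prove.
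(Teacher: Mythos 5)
Your proposal is correct and is essentially identical to the paper's own proof: the paper likewise notes that membership in $[T,X]$ is condition (c) of Proposition \ref{prop-7}, passes to condition (g) to get $fT_{X'}(g)\in L^1$ for all $g\in X'$, and then invokes Proposition \ref{prop-2}(a). Your extra remark about the boundedness of $T_{X'}$ is a harmless elaboration of what the paper already recalled before Proposition \ref{prop-2}.
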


\begin{proof}
Given $f\in[T,X]\subseteq L^1$, it follows from the definition of $[T,X]$
and Proposition \ref{prop-7} that $fT_{X'}(g)\in L^1$ for every $g\in X'$.
The desired formula in then immediate from Proposition \ref{prop-2}(a).
\end{proof}

\begin{lemma}\label{NEW-4.4}
Let $X$ be a r.i.\ space satisfying 
$0<\underline{\alpha}_X\le \overline{\alpha}_X<1$. Then
the normed function space $[T,X]$ is complete.
\end{lemma}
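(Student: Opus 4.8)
The strategy is the standard one for showing a normed function space is complete: take a Cauchy sequence $\{f_n\}$ in $[T,X]$, first extract a limit candidate $f$ living in $L^1$ (or even $L^0$), then upgrade to convergence in the $[T,X]$-norm. The key is that the embedding $[T,X]\hookrightarrow L^1$ is continuous, so a $[T,X]$-Cauchy sequence is $L^1$-Cauchy and we get $f\in L^1$ with $f_n\to f$ in $L^1$, hence (along a subsequence) a.e.\ To see the embedding is continuous, note that for $f\in[T,X]$ and any $\theta\in L^\infty$ with $|\theta|=1$ one has $\|T(\theta f)\|_X\le\|f\|_{[T,X]}$; pairing with a fixed $g\in X'$ via the Parseval formula of Lemma \ref{NEW-4.3}, choosing $\theta$ to make $\theta f \cdot T_{X'}(g)=|f|\,|T_{X'}(g)|$, gives $\int_{-1}^1|f|\,|T_{X'}(g)|\le\|f\|_{[T,X]}\|g\|_{X'}$. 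Taking $g=\mathbf 1\in X'$ (recall $L^\infty\subseteq X'$) and using that $T_{X'}(\mathbf 1)$ is a fixed function which is nonzero on a set of full measure, one bounds $\int_{-1}^1|f|\cdot w\le\|f\|_{[T,X]}\|\mathbf 1\|_{X'}$ for a fixed strictly-positive weight $w:=|T_{X'}(\mathbf 1)|$; alternatively, and more cleanly, apply this with $g$ ranging over a suitable total subset of $X'$ to recover $\|f\|_{L^1}\lesssim\|f\|_{[T,X]}$ (the weighted-$L^1$ bound already forces $f\in L^1$ with a norm estimate once one recalls $w^{-1}$ is locally bounded and $f\in L^1$ a priori, but since every element of $[T,X]$ is by definition already in $L^1$ it is enough to know the embedding is merely continuous, which the weighted estimate delivers after splitting $(-1,1)$ into the level sets of $w$ as in the proof of Proposition \ref{prop-7}).

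With $f\in L^1$ and $f_n\to f$ a.e.\ (along a subsequence) in hand, the plan is to show $f\in[T,X]$ and $\|f_n-f\|_{[T,X]}\to0$. Fix $\varepsilon>0$ and $N$ with $\|f_n-f_m\|_{[T,X]}\le\varepsilon$ for $n,m\ge N$. Let $h\in L^0$ with $|h|\le|f|$ be arbitrary; write $h=\varphi f$ with $|\varphi|\le1$, and set $h_m:=\varphi f_m$, so $|h_m|\le|f_m|$ and $h_m\to h$ a.e., hence (dominated by $|f|+|f_m|$, or rather: $|h_n-h_m|\le|f_n-f_m|$) we get $\|h_n-h_m\|$-type control. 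Precisely, for $n,m\ge N$, since $|h_n-h_m|=|\varphi||f_n-f_m|\le|f_n-f_m|$, the definition of $\|\cdot\|_{[T,X]}$ gives $\|T(h_n)-T(h_m)\|_X=\|T(h_n-h_m)\|_X\le\|f_n-f_m\|_{[T,X]}\le\varepsilon$. Thus $\{T(h_m)\}$ is Cauchy in $X$, with a limit $g_h\in X$. As in the proof of the implication (b)$\Rightarrow$(c) of Proposition \ref{prop-7}: the inclusion $X\subseteq L^{1,\infty}$ is continuous so $T(h_m)\to g_h$ in $L^{1,\infty}$, while $h_m\to h$ in $L^1$ (dominated convergence, $|h_m|\le|f_m|$ and $f_m\to f$ in $L^1$ gives $h_m\to h$ in $L^1$ after passing to the a.e.-convergent subsequence and dominating; more carefully $|h_m-h|\le|f_m-f|$ pointwise, so $h_m\to h$ in $L^1$ on the nose) so Kolmogorov's theorem gives $T(h_m)\to T(h)$ in $L^{1,\infty}$. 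Hence $T(h)=g_h\in X$. Since $h$ with $|h|\le|f|$ was arbitrary, $f\in[T,X]$.

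Finally, for the norm convergence: from $\|T(h_n)-T(h_m)\|_X\le\varepsilon$ for $n,m\ge N$, let $m\to\infty$; since $T(h_m)\to T(h)$ in $X$ we get $\|T(h_n)-T(h)\|_X\le\varepsilon$ for all $n\ge N$, i.e.\ $\|T(h_n-h)\|_X\le\varepsilon$. As $h=\varphi f$ and $h_n=\varphi f_n$ with $|\varphi|\le1$ arbitrary, taking the supremum over all such $h$ (equivalently over all $|k|\le|f_n-f|$, since every such $k$ is of the form $\varphi(f_n-f)$ with... — here one uses instead directly: for $|k|\le|f_n-f|$ write $k=\psi(f_n-f)$, $|\psi|\le1$, then $T(k)=T(\psi f_n)-T(\psi f)$ and by the same Cauchy-then-pass-to-limit argument applied to the sequence $\psi f_j$, $\|T(k)\|_X=\lim_j\|T(\psi f_n)-T(\psi f_j)\|_X\le\limsup_j\|f_n-f_j\|_{[T,X]}\le\varepsilon$) yields $\|f_n-f\|_{[T,X]}\le\varepsilon$ for $n\ge N$. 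Hence $f_n\to f$ in $[T,X]$, proving completeness.

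The main obstacle is bookkeeping rather than depth: one must be careful that the single a.e.-convergent subsequence is used consistently (a Cauchy sequence with a convergent subsequence converges, so this costs nothing), and that when estimating $\|T(k)\|_X$ for $|k|\le|f_n-f|$ one does not need $f_n-f\in[T,X]$ in advance — this is obtained simultaneously by writing $k=\psi(f_n-f)$ and recognizing $T(k)$ as an $X$-limit of the genuinely-defined differences $T(\psi f_n)-T(\psi f_j)$, exactly as in Proposition \ref{prop-7}(b)$\Rightarrow$(c). The continuity of $[T,X]\hookrightarrow L^1$ via the weighted Parseval estimate is the one genuinely new ingredient and should be stated as a short preliminary step.
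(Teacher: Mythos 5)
Your overall architecture is sound and, apart from working with a Cauchy sequence rather than an absolutely convergent series (a cosmetic difference), it matches the paper's: continuously embed $[T,X]$ into $L^1$ to produce a candidate limit $f\in L^1$, then use the factorization $h=\varphi f$, $h_m=\varphi f_m$, the estimate $\|T(h_n)-T(h_m)\|_X=\|T(h_n-h_m)\|_X\le\|f_n-f_m\|_{[T,X]}$, and the identification of the $X$-limit with $T(h)$ inside $L^{1,\infty}$ via Kolmogorov's theorem, both to get $T(h)\in X$ and to pass to the limit in the norm. Those parts are correct as written.

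The gap is in the one step you yourself single out as the genuinely new ingredient: the continuity of the embedding $[T,X]\hookrightarrow L^1$. The weighted estimate $\int_{-1}^1|f|\,|T_{X'}(g)|\,dx\le\|f\|_{[T,X]}\|g\|_{X'}$ is fine (it is exactly the identity \eqref{III}), but with the single choice $g=\mathbf 1$ the weight $w=|T(\mathbf 1)|=\tfrac1\pi\bigl|\log\tfrac{1-t}{1+t}\bigr|$ vanishes at $t=0$, so $w^{-1}$ is \emph{not} locally bounded there, and ``nonzero a.e.'' is not enough to convert $\int|f|w\le C$ into $\int|f|\le C'$: you get no control of $\int_{|t|<\delta}|f|$. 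The level-set decomposition from the proof of Proposition \ref{prop-7} does not rescue this, because there only a.e.\ positivity of $1/\Psi$ was needed (for a totality argument), whereas here a quantitative lower bound is required. The repair is easy but must be supplied: take two test functions whose transforms have disjoint zero sets, e.g.\ $g_1=\mathbf 1$ and $g_2=\chi_{(0,1)}$, for which $T(g_2)(t)=\tfrac1\pi\log\tfrac{1-t}{|t|}$ vanishes only at $t=1/2$; then $|T(g_1)|+|T(g_2)|$ is bounded below by a positive constant on $(-1,1)$ and summing the two weighted estimates gives $\|f\|_{L^1}\le C\|f\|_{[T,X]}$. The paper avoids duality altogether at this point: since $|x-t|\le2$ for $x\in(-1,0)$ and $t\in(0,1)$, one has $|T(|h|\chi_{(-1,0)})(t)|\ge\tfrac1{2\pi}\int_{-1}^0|h|$ on $(0,1)$, and symmetrically on the other half-interval, which yields $\|h\|_{L^1}\le C\,\|h\|_{[T,X]}$ with $C$ a multiple of $1/\varphi_X(1)$ --- a more elementary route you may prefer to adopt.
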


\begin{proof}
Let $f_n\in[T,X]$, for $n\in\N$, satisfy
$$
\sum_{n=1}^\infty \|f_n\|_{[T,X]}<\infty.
$$
This implies, for every choice of $h_n$ with $|h_n|\le|f_n|$, that
\begin{equation}\label{th}
\sum_{n=1}^\infty \|T(h_n)\|_X<\infty.
\end{equation}

(A) Let $h\in [T,X]\subseteq L^1$.  As $|h|\chi_{(-1,0)}\le|h|$ 
we have that $T(|h|\chi_{(-1,0)})\in X$.
If $0<t<1$, then 
$$
\left|T\left(|h|\chi_{(-1,0)}\right)(t)\right|
=\frac{1}{\pi}\int_{-1}^0\frac{|h(x)|}{|x-t|}dx \ge \frac{1}{2\pi}
\int_{-1}^0|h(x)|\,dx , 
$$
since for $-1<x<0$ and $0<t<1$ we have $|x-t|\le2$. Consequently, 
\begin{align*}
\|T\left(|h|\chi_{(-1,0)}\right)\|_X& \ge 
\|T\left(|h|\chi_{(-1,0)}\right)\chi_{(0,1)}\|_X \ge %\\ & \ge 
\left(\frac{1}{2\pi}\int_{-1}^0|h(x)|\,dx\right) \|\chi_{(0,1)}\|_X .
\end{align*}
In a similar way,  as $|h|\chi_{(0,1)}\le|h|$, 
we have that $T(|h|\chi_{(0,1)})\in X$. If $-1<t<0$, then 
$$
T\left(|h|\chi_{(0,1)}\right)(t)
=\frac{1}{\pi}\int_{0}^1\frac{|h(x)|}{x-t}dx \ge \frac{1}{2\pi}\int_{0}^1|h(x)|\,dx , 
$$
since for $-1<t<0$ and $0<x<1$ we have $0\le x-t\le2$. Consequently,
\begin{align*}
\|T\left(|h|\chi_{(0,1)}\right)\|_X &\ge 
\|T\left(|h|\chi_{(0,1)}\right)\chi_{(-1,0)}\|_X \ge % \\ & \ge 
\left(\frac{1}{2\pi}\int_{0}^1|h(x)|\,dx\right) \|\chi_{(-1,0)}\|_X.
\end{align*}

Applying \eqref{th} with $h_n:=|f_n|\chi_{(-1,0)}$ and 
$h_n:=|f_n|\chi_{(0,1)}$ it follows, from the previous bounds 
for $h=f_n$, that 
\begin{align*}
\sum_{n=1}^\infty \|f_n\|_{L^1}&=\sum_{n=1}^\infty 
\left( \int_{-1}^0|f_n(x)|\,dx + \int_{0}^1|f_n(x)|\,dx\right)
\\ &\le
\sum_{n=1}^\infty 
C\left(\|T\left(|f_n|\chi_{(-1,0)}\right)\|_X+ \|T\left(|f_n|\chi_{(0,1)}\right)\|_X\right)
\\ &\le
2C  \sum_{n=1}^\infty \|f_n\|_{[T,X]}<\infty,
\end{align*}
with $C:=(2\pi)/\varphi_X(1)$, since  $\|\chi_{(0,1)}\|_X=\|\chi_{(-1,0)}\|_X=\varphi_X(1)$.
Hence, we have
\begin{equation}\label{L1}
\sum_{n=1}^\infty f_n=:f\in L^1
\end{equation}
with absolute convergence in   $L^1$  and hence, also pointwise a.e.

(B) We now show that $f\in[T,X]$. Select $h\in L^0$ 
satisfying $|h|\le|f|$. We need to prove that $T(h)\in X$. 
To this end, let $\varphi\in L^0$ satisfy $|\varphi|\le1$
and $h=\varphi f$. Then
$$
h=\varphi f=\sum_{n=1}^\infty \varphi f_n,
\quad \mathrm{a.e.}
$$
The functions  $h_n:=\varphi f_n\in[T,X]$, for $n\in\mathbb{N}$,
satisfy
$$
\sum_{n=1}^\infty\|h_n\|_{[T,X]}\le \sum_{n=1}^\infty\|f_n\|_{[T,X]}<\infty
$$
due to the ideal property of $[T,X]$. We can apply the arguments in (A)
to deduce that the series $\sum_{n=1}^\infty h_n$ converges (absolutely) in
$L^1$ to $h$. Kolmogorov's Theorem yields that the series 
$\sum_{n=1}^\infty T(h_n)$ converges to $T(h)$ in $L^{1,\infty}$.

On the other hand, since the series $\sum_{n=1}^\infty T(h_n)$ converges
absolutely in $X$ (see \eqref{th}), it is convergent, say to 
$g=\sum_{n=1}^\infty T(h_n)$ in $X$ and hence, also in $L^{1,\infty}$. 
Accordingly, $T(h)=g$ and so $T(h)\in X$. 
This establishes that $f\in[T,X]$.

(C) It remains to show that $\sum_{n=1}^\infty f_n$ converges to $f$ in the topology of $[T,X]$, 
that is, $\|f-\sum_{n=1}^N f_n\|_{[T,X]}\to0$ as $N\to\infty$. Fix $N\in\mathbb{N}$. Let $h\in L^0$ satisfy
$$
|h|\le \left| f-\sum_{n=1}^N f_n\right| =\left|\sum_{n=N+1}^\infty f_n\right|\le \sum_{n=N+1}^\infty |f_n|.
$$
We can reproduce the argument used in (B) to deduce that
$$
h=\sum_{n=N+1}^\infty h_n,\quad |h_n|\le|f_n|,\quad n\ge N+1.
$$ 
Then
$$
\|T(h)\|_X\le\sum_{n=N+1}^\infty \|T(h_n)\|_X\le \sum_{n=N+1}^\infty \|f_n\|_{[T,X]}.
$$
That is, for each $N\in\mathbb{N}$, we have
$$
\|f-\sum_{n=1}^N f_n\|_{[T,X]} 
=\sup_{ |h|\le| f-\sum_{n=1}^N f_n|} \|T(h)\|_X\le \sum_{n=N+1}^\infty \|f_n\|_{[T,X]}\to0,
$$
which establishes the completeness of $[T,X]$.
\end{proof}

We will require an alternate description of the norm
$\|\cdot\|_{[T,X]}$ to that given in \eqref{TX-norm}, namely
\begin{equation}\label{III}
\|f\|_{[T,X]}=\sup_{\|g\|_{X'}\le1}\|fT_{X'}(g)\|_{L^1},\quad f\in[T,X].
\end{equation}
To verify this fix $f\in[T,X]$. Given $\varphi\in L^0$ with $|\varphi|\le1$,
the function $\varphi f\in[T,X]$ as $|\varphi f|\le|f|$. It follows from Lemma \ref{NEW-4.3}
(see also its proof) with $\varphi f$ in place of $f$, that
$\varphi fT_{X'}(g)\in L^1$ for all $g\in X'$ 
(in particular, also $fT_{X'}(g)\in L^1$) and
$$
\int_{-1}^1(\varphi f)T_{X'}(g)=-\int_{-1}^1gT(\varphi f),\quad g\in X'.
$$
Since $\{\varphi f: \varphi\in L^0, |\varphi|\le1\}=\{h\in L^0:|h|\le|f|\}$, the previous formula
yields \eqref{III} because \eqref{TX-norm} implies that
\begin{align*}
\|f\|_{[T,X]}&= \sup_{|\varphi|\le1}\|T(\varphi f)\|_X
= \sup_{|\varphi|\le1} \sup_{\|g\|_{X'}\le1}\Big|\int_{-1}^1 gT(\varphi f)\Big|
\\ &= 
\sup_{\|g\|_{X'}\le1}\sup_{|\varphi|\le1} \Big|\int_{-1}^1 (\varphi f)T_{X'}(g)\Big|
= 
\sup_{\|g\|_{X'}\le1}\|fT_{X'}(g)\|_{L^1}. 
\end{align*}

\begin{proposition}\label{prop 4.5}
Let $X$ be a   r.i.\ space  satisfying   
$0<\underline{\alpha}_X\le \overline{\alpha}_X<1$. 
Then $[T,X]$ is a B.f.s.
\end{proposition}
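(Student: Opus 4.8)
The plan is to verify the remaining axioms of a Banach function space for $[T,X]$, since a great deal has already been accumulated: $[T,X]$ is a normed vector space, it is an ideal in $L^1$ with a lattice norm, it contains $X$, and by Lemma \ref{NEW-4.4} it is complete. What remains to be checked for the definition of a B.f.s.\ used in the paper is (i) that $[T,X]\subseteq L^0$ is a vector subspace satisfying the ideal property with respect to its own norm — the latter is exactly the lattice-norm monotonicity already established — and (ii) the Fatou property, since the paper stipulates that all B.f.s.'\ in question satisfy it and that this forces $X''=X$. So the two genuine tasks are the Fatou property and a brief check that the norm is in fact a norm that makes $[T,X]$ a normed ideal in the sense required; the latter is already in hand from the discussion preceding Lemma \ref{NEW-4.3}.

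For the Fatou property I would argue as follows. Suppose $\{f_n\}_{n=1}^\infty\subseteq [T,X]$ with $0\le f_n\le f_{n+1}\uparrow f$ a.e.\ and $\sup_n\|f_n\|_{[T,X]}=:M<\infty$. Using the alternate description \eqref{III} of the norm, for each fixed $g\in X'$ with $\|g\|_{X'}\le 1$ we have $\|f_n T_{X'}(g)\|_{L^1}\le M$ for all $n$, and since $f_n|T_{X'}(g)|\uparrow f|T_{X'}(g)|$ pointwise a.e., the Monotone Convergence Theorem gives $\|f T_{X'}(g)\|_{L^1}=\lim_n\|f_n T_{X'}(g)\|_{L^1}\le M$. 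First this shows $fT_{X'}(g)\in L^1$ for every $g\in X'$, so by Proposition \ref{prop-7} (specifically the equivalence of (g) with (a)) we get $f\in[T,X]$; note $f\in L^1$ already because $\sup_n\|f_n\|_{L^1}\le C\sup_n\|f_n\|_{[T,X]}<\infty$ by the estimate in part (A) of the proof of Lemma \ref{NEW-4.4} together with the Monotone Convergence Theorem. Then taking the supremum over $\|g\|_{X'}\le1$ in the inequality $\|fT_{X'}(g)\|_{L^1}\le M$ and using \eqref{III} again yields $\|f\|_{[T,X]}\le M=\sup_n\|f_n\|_{[T,X]}$; the reverse inequality $\sup_n\|f_n\|_{[T,X]}\le\|f\|_{[T,X]}$ is immediate from lattice monotonicity of the norm, so $\|f_n\|_{[T,X]}\uparrow\|f\|_{[T,X]}$.

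The main obstacle, and the reason \eqref{III} was isolated beforehand, is that the defining expression \eqref{TX-norm} for $\|f\|_{[T,X]}$ as $\sup_{|h|\le|f|}\|T(h)\|_X$ does not interact transparently with monotone limits in $f$: one cannot pass a pointwise increasing limit through the operator $T$ and the $X$-norm in an elementary way. The dual reformulation \eqref{III} converts the problem into a supremum of $L^1$-norms $\|fT_{X'}(g)\|_{L^1}$, a quantity that is manifestly monotone in $|f|$ and to which the Monotone Convergence Theorem applies directly, which is precisely what makes the Fatou argument go through. Once Fatou is in place, together with completeness (Lemma \ref{NEW-4.4}), the ideal property, and the identification $X\subseteq[T,X]\subseteq L^1\subseteq L^0$, all the requirements for $[T,X]$ to be a B.f.s.\ (in the sense fixed in Section \ref{S2}) are met, and the proof is complete.
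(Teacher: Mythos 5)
Your proposal is correct and follows essentially the same route as the paper: the only substantive point left after Lemma \ref{NEW-4.4} is the Fatou property, and you establish it exactly as the paper does, via the dual reformulation \eqref{III} of the norm, the Monotone Convergence Theorem, and the equivalence with condition (g) of Proposition \ref{prop-7} (membership in $[T,X]$ is literally condition (c)/(d) there, but all of (a)--(g) are equivalent, so your citation of (g)$\Leftrightarrow$(a) is harmless).
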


\begin{proof}
In view of Lemma \ref{NEW-4.4} it remains to establish that $[T,X]$ 
possesses the Fatou property.

Let $0\le f\in L^0$ and $\{f_n\}_{n=1}^\infty\subseteq [T,X]\subseteq L^1$
be a sequence such that $0\le f_n\le f_{n+1}\uparrow f$ pointwise a.e.\ with $\sup_n\|f_n\|_{[T,X]}<\infty$. In Step A of the proof of Lemma \ref{NEW-4.4} it was shown that
$$
\|h\|_{L^1}\le (4/\varphi_X(1))\|h\|_{[T,X]},\quad h\in [T,X],
$$
which ensures that also $\sup_n\|f_n\|_{L^1}<\infty$. Hence, via Fatou's lemma, 
$f\in L^1$. Moreover, the Monotone Convergence Theorem together with
\eqref{III} applied to $f_n\in[T,X]$ for each $n\in\N$ yields
\begin{align*}
\sup_{\|g\|_{X'}\le1}\int_{-1}^1|fT_{X'}(g)|
&=
\sup_{\|g\|_{X'}\le1}\sup_n\int_{-1}^1|f_nT_{X'}(g)|
\\ &= 
\sup_n\sup_{\|g\|_{X'}\le1}\int_{-1}^1|f_nT_{X'}(g)|
=
\sup_n\|f_n\|_{[T,X]}<\infty.
\end{align*}
In particular, $fT_{X'}(g)\in L^1$ for every $g\in X'$ with $f\in L^1$.
According to  (c)$\Leftrightarrow$(g) in Proposition \ref{prop-7}
we have $f\in[T,X]$ and, via \eqref{III} and the previous identity, that
$\|f\|_{[T,X]}=\sup_n\|f_n\|_{[T,X]}$. So, we have established that
$[T,X]$ has the Fatou property.
\end{proof}

The optimality property of the B.f.s.\ $[T,X]$ relative to $T_X$ 
can now be formulated.

\begin{theorem}\label{theorem 4.6}
Let $X$ be a   r.i.\ space  satisfying   
$0<\underline{\alpha}_X\le \overline{\alpha}_X<1$. 
Then $[T,X]$ is the largest B.f.s.\ containing $X$   
to which $T_X\colon X\to X$ has a continuous, linear, $X$-valued extension.
\end{theorem}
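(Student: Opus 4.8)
The plan is to verify the two defining features of an optimal-domain statement: first, that $T_X$ genuinely extends to $[T,X]$ in a continuous, $X$-valued way, and second, that $[T,X]$ absorbs every competitor. By Proposition \ref{prop 4.5} we already know $[T,X]$ is a B.f.s., and from the discussion following \eqref{tx} we have $X\subseteq[T,X]$ as an ideal. So the only work in the first half is to produce the extension. Since $[T,X]\subseteq L^1$ and $T$ is defined on all of $L^1$ (mapping into $L^{1,\infty}$), the natural candidate is just the restriction of $T$ itself to $[T,X]$. For $f\in[T,X]$, taking $h=f$ in the definition shows $T(f)\in X$, so $T$ does map $[T,X]$ into $X$; and for $f\in X\subseteq[T,X]$ this restriction agrees with $T_X$. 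Continuity is immediate from the defining norm \eqref{TX-norm}: $\|T(f)\|_X\le\sup_{|h|\le|f|}\|T(h)\|_X=\|f\|_{[T,X]}$, so the extension has norm $\le1$. This handles existence of the extension cleanly.

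The substantive half is maximality. I would take an arbitrary B.f.s.\ $Z$ with $X\subseteq Z$ (continuously, which for B.f.s.' is automatic once the inclusion is a linear map by the closed graph theorem, but one should state the continuity hypothesis as part of ``$T_X$ extends to $Z$'') and suppose $T_X$ has a continuous linear extension $S\colon Z\to X$. The goal is to show $Z\subseteq[T,X]$, i.e.\ every $f\in Z$ satisfies $f\in L^1$ and $T(h)\in X$ for all $|h|\le|f|$. The first point is free since $Z\subseteq L^1$ (every B.f.s.\ on $(-1,1)$ embeds in $L^1$). For the second, fix $f\in Z$ and $h\in L^0$ with $|h|\le|f|$; by the ideal property of a B.f.s., $h\in Z$ as well. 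The key is to identify $S(h)$ with $T(h)$: the extension $S$ agrees with $T$ on simple functions (which lie in $L^\infty\subseteq X$), and one approximates $h$ by simple functions $s_n$ with $|s_n|\le|h|$ and $s_n\to h$ pointwise; then $s_n\to h$ in $L^1$ (dominated convergence, $|s_n|\le|f|\in L^1$), so Kolmogorov's theorem gives $T(s_n)=S(s_n)\to T(h)$ in $L^{1,\infty}$, while continuity of $S$ together with $s_n\to h$ in $Z$ — which one gets from dominated convergence in the B.f.s.\ $Z$, valid provided $Z$ has absolutely continuous norm on the relevant set, or alternatively by a Fatou/truncation argument — gives $S(s_n)\to S(h)$ in $X$, hence in $L^{1,\infty}$. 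Comparing limits, $T(h)=S(h)\in X$. Since $h$ was arbitrary with $|h|\le|f|$, this says exactly $f\in[T,X]$.

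The delicate point in that last paragraph is the convergence $s_n\to h$ in the norm of $Z$: a general B.f.s.\ need not have absolutely continuous norm, so dominated convergence in $Z$ can fail. I expect this to be the main obstacle, and the way around it is to avoid norm-approximation in $Z$ entirely and instead argue pointwise/dually. Concretely: $S$ is continuous from $Z$ into $X$, so for $g\in X'$ the functional $f\mapsto\int_{-1}^1 g\,S(f)$ is continuous on $Z$, hence lies in $Z^*$ and is given by integration against some element of $Z'$; but on simple functions $s$ this functional equals $\int_{-1}^1 g\,T(s)=-\int_{-1}^1 s\,T_{X'}(g)$ by Parseval (Proposition \ref{prop-2}(b), noting $T_{X'}(g)\in X'\subseteq Z'$ since $X\subseteq Z$ forces $Z'\subseteq X'$ — wait, the inclusion goes $X\subseteq Z \Rightarrow Z'\subseteq X'$, so we actually need $T_{X'}(g)$ paired against $Z$, which is fine as $g\in X'\subseteq Z'$... one must track these inclusions carefully). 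Since simple functions are dense in an appropriate sense for the duality, one concludes $\int_{-1}^1 g\,S(f)=-\int_{-1}^1 f\,T_{X'}(g)$ for all $f\in Z$, $g\in X'$; in particular $fT_{X'}(g)\in L^1$ for every $g\in X'$ and every $f\in Z$. Now apply the same reasoning to $h$ in place of $f$ (legitimate as $h\in Z$): $hT_{X'}(g)\in L^1$ for all $g\in X'$. By Proposition \ref{prop-7}, equivalence (g)$\Leftrightarrow$(c), this forces $T(h')\in X$ for all $|h'|\le|h|$, in particular $T(h)\in X$. Running this for all $h$ with $|h|\le|f|$ shows $f\in[T,X]$, completing the proof that $[T,X]$ is the largest such B.f.s.
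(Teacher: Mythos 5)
Your proposal contains the paper's argument as its core, and under the reading the paper intends it is already complete: the existence half (restrict $T$ to $[T,X]$; contractivity is immediate from \eqref{TX-norm}) and the first paragraph of your maximality half (the ideal property of $Z$ gives $h\in Z$ whenever $|h|\le|f|$, whence $\sup_{|h|\le|f|}\|T(h)\|_X\le\|T\|_{op}\|f\|_Z<\infty$, i.e.\ $f\in[T,X]$) are exactly what the paper does, and nothing more is needed. Where you diverge is in the meaning of ``extension''. The paper takes the extension to be $T$ itself: the finite Hilbert transform is already defined on all of $L^1\supseteq Z$ with values in $L^{1,\infty}$, and ``$T_X$ has an $X$-valued extension to $Z$'' means $T(Z)\subseteq X$ continuously --- this is the standard convention in the optimal-domain literature and the only reading under which the statement is unambiguous, since $[T,X]$ is itself defined via the principal-value integral. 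You instead admit an abstract continuous linear $S\colon Z\to X$ with $S|_X=T_X$ and then try to prove $S=T|_Z$; that is a genuinely stronger assertion, and neither of your two routes to it is watertight. The approximation route needs $s_n\to h$ in the norm of $Z$, which fails without absolute continuity of $\|\cdot\|_Z$ (as you concede). The duality route needs the functional $f\mapsto\int_{-1}^1 g\,S(f)$ on $Z$ to be represented by an element of the \emph{associate} space $Z'$ rather than merely of $Z^*$ (these differ unless $Z$ has order continuous norm), and also needs simple functions to be dense in a sense strong enough to propagate the identity from $\operatorname{sim}\mathcal{B}$ to all of $Z$ --- again unavailable in a general B.f.s. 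So: if you adopt the paper's convention, delete your last two paragraphs and your proof is the paper's; if you insist on the abstract-extension reading, the identification $S=T|_Z$ remains unproved. One further small inaccuracy: a B.f.s.\ in the sense of this paper (an ideal in $L^0$ with the Fatou property) need \emph{not} embed in $L^1$ --- consider $L^1(|x|\,dx)$ on $(-1,1)$ --- so the hypothesis $Z\subseteq L^1$ has to be imposed (as the paper does) rather than deduced; it is forced anyway if $T$ is to be defined on $Z$.
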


\begin{proof}
Let $Z\subseteq L^1$ be any B.f.s.\ with $X\subseteq Z$ such that
$T_X$ has a continuous, linear extension $T\colon Z\to X$. Fix $f\in Z$.
Then for each $h\in L^0$ with $|h|\le|f|$ we have $h\in Z$ and
$$
\|T(h)\|_X\le \|T\|_{op}\|h\|_Z\le\|T\|_{op} \|f\|_Z,
$$
where $\|T\|_{op}$ is the operator norm of $T\colon Z\to X$. Then
$f\in[T,X]$ and so the space $[T,X]$ contains $Z$ continuously.
Due to the boundedness of $T_X\colon X\to X$ we have that
$$
\|f\|_{[T,X]}=\sup_{|h|\le|f|}\|T(h)\|_X\le \|T_X\|_{op}\|f\|_X,\quad f\in X,
$$
and so $X\subseteq[T,X]$ continuously. By construction $T\colon[T,X]\to X$
and $T$ is continuous. Hence, $[T,X]$ is the
\textit{largest} B.f.s.\ containing $X$ to which $T_X\colon X\to X$ has a 
continuous, linear, $X$-valued extension.
\end{proof}

%%%%%%%%%%%%%%%%%%%%%%%%%%%%%%%%%%%%%%%%%%%%%

We can now prove the impossibility of extending $T_X\colon X\to X$.

\begin{theorem}\label{theo-10} 
Let $X$ be a  r.i.\ space   
satisfying  either 
$1/2<\underline{\alpha}_X\le \overline{\alpha}_X<1$ or 
$0<\underline{\alpha}_X\le \overline{\alpha}_X<1/2$. Then the finite Hilbert transform
$T_X\colon X\to X$ has no $X$-valued, continuous linear extension to any 
larger B.f.s.
\end{theorem}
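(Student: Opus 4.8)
The plan is to identify the optimal domain space $[T,X]$ furnished by Theorem \ref{theorem 4.6} with $X$ itself. Since Theorem \ref{theorem 4.6} already asserts that $[T,X]$ is the \emph{largest} B.f.s.\ containing $X$ to which $T_X\colon X\to X$ admits a continuous, linear, $X$-valued extension (this applies here, as each of the two index conditions implies $0<\underline{\alpha}_X\le\overline{\alpha}_X<1$), it suffices to prove that $[T,X]=X$, with equivalent norms. The non-extendability then follows at once: if $Z\subseteq L^1$ is any B.f.s.\ with $X\subseteq Z$ to which $T_X$ has a continuous, $X$-valued linear extension, then $Z\subseteq[T,X]=X$ by maximality, whence $Z=X$; so no genuinely larger B.f.s.\ can serve as domain.

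First I would establish the set-theoretic equality $[T,X]=X$. By definition, a function $f$ belongs to $[T,X]$ precisely when $f\in L^1$ and $T(h)\in X$ for every $h\in L^0$ with $|h|\le|f|$ a.e. But this is exactly condition (d) of Proposition \ref{cor-8}, whose hypotheses on the Boyd indices of $X$ coincide with those of the present theorem. Hence the equivalence (a)$\Leftrightarrow$(d) of Proposition \ref{cor-8} gives, for $f\in L^1$, that $f\in[T,X]$ if and only if $f\in X$. Since every r.i.\ space on $(-1,1)$ is contained in $L^1$, this yields $[T,X]=X$ as sets.

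Next I would upgrade this to an isomorphism of Banach spaces. One inclusion is elementary and was already recorded in the proof of Theorem \ref{theorem 4.6}: by the ideal property of the r.i.\ space $X$, one has $\|h\|_X\le\|f\|_X$ whenever $|h|\le|f|$ a.e., so $\|f\|_{[T,X]}=\sup_{|h|\le|f|}\|T(h)\|_X\le\|T_X\|_{op}\,\|f\|_X$, i.e.\ $X\hookrightarrow[T,X]$ continuously. Now $[T,X]$ is a B.f.s.\ by Proposition \ref{prop 4.5} and $X$ is a B.f.s., so the identity map is a continuous linear bijection between Banach spaces; by the Open Mapping Theorem it is an isomorphism, and therefore $\|\cdot\|_X$ and $\|\cdot\|_{[T,X]}$ are equivalent norms on the common underlying space. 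Combining this with Theorem \ref{theorem 4.6} as explained above completes the argument.

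The bulk of the work underlying this theorem has in fact already been carried out: it resides in Proposition \ref{cor-8} (which rests on the inversion Theorems \ref{theo-3} and \ref{theo-4} applied to the associate space $X'$), in the characterizations of Proposition \ref{prop-7} (which invoke Talagrand's theorem and the Dieudonn\'e--Grothendieck theorem), and in the structural analysis of $[T,X]$ culminating in Proposition \ref{prop 4.5} and Theorem \ref{theorem 4.6}. Thus the only genuine step here, and the place where care is needed, is the observation that the equivalence ``(a)$\Leftrightarrow$(d)'' of Proposition \ref{cor-8} is nothing other than the statement $[T,X]=X$; everything else is a formal deduction, with the Open Mapping Theorem supplying the passage from set equality to norm equivalence.
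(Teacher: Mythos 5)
Your proposal is correct and follows essentially the same route as the paper: both reduce the theorem to the identity $[T,X]=X$ via the maximality statement of Theorem \ref{theorem 4.6} and then obtain that identity from the equivalence (a)$\Leftrightarrow$(d) of Proposition \ref{cor-8}. Your additional remark on norm equivalence via the Open Mapping Theorem is a harmless refinement the paper defers to Corollary \ref{cor-10}.
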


\begin{proof} 
According to Theorem  \ref{theorem 4.6}, whenever 
$0<\underline{\alpha}_X\le \overline{\alpha}_X<1$, 
the space $[T,X]$ is the largest B.f.s.\
to which $T_X\colon X\to X$ can be continuously extended with $X\subseteq[T,X]$
continuously. So, it suffices  to prove that
 $[T,X]= X$. But, this corresponds precisely to the equivalence in 
Proposition \ref{cor-8} between the condition (a), i.e., $f\in X$, 
and the condition (d), i.e, $T(h)\in X$ for all $h\in L^0$ with $|h|\le|f|$,
which is the statement that  $f\in[T,X]$. 
\end{proof}

%%%%%%%%%%%%%%%%%%%%%%%%%%%%%%%%%%%%%

Recall that $T_X$ is not an isomorphism. Nevertheless, 
Theorems \ref{theo-3} and \ref{theo-4} 
yield  norms, in terms 
of the finite Hilbert transform, which are equivalent
to the given norm in the corresponding r.i.\ space.

\begin{corollary}\label{cor-10}
Let $X$ be a  r.i.\ space   
satisfying  either 
$1/2<\underline{\alpha}_X\le \overline{\alpha}_X<1$ or 
$0<\underline{\alpha}_X\le \overline{\alpha}_X<1/2$. 
Then there exists a constant $C_X>0$ such that
\begin{align*}
\frac{C_X}{4}\|f\|_X\le \sup_{A\in\mathcal{B}}\big\|T_X(\chi_A f)\big\|_X
& \le  \sup_{|\theta|=1}\big\|T_X(\theta f)\big\|_X
\\
& \le \sup_{|h|\le|f|}\big\|T_X(h)\big\|_X
\le  \|T_X\| \cdot \|f\|_X ,
\end{align*}
for every $f\in X$.
\end{corollary}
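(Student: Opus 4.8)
The plan is to read off almost all of the stated chain from Proposition~\ref{prop-7} and Theorem~\ref{theo-10}, the only genuinely new ingredient being an application of the open mapping theorem. First I would dispose of the routine bounds. The rightmost inequality $\sup_{|h|\le|f|}\|T_X(h)\|_X\le\|T_X\|\cdot\|f\|_X$ is immediate from the boundedness of $T_X$ on $X$ and the ideal property (each $h$ with $|h|\le|f|$ lies in $X$ with $\|h\|_X\le\|f\|_X$, and $T(h)=T_X(h)$). Next, since $f\in X$ forces $T(f\chi_A)=T_X(f\chi_A)\in X$ for every $A\in\mathcal B$, condition~(a) of Proposition~\ref{prop-7} is satisfied, so the chain~\eqref{norms} applies; it yields exactly the two inner inequalities
$$
\sup_{A\in\mathcal B}\|T_X(\chi_A f)\|_X\le\sup_{|\theta|=1}\|T_X(\theta f)\|_X\le\sup_{|h|\le|f|}\|T_X(h)\|_X ,
$$
together with the further estimate $\sup_{|h|\le|f|}\|T_X(h)\|_X\le 4\sup_{A\in\mathcal B}\|T_X(\chi_A f)\|_X$, which I will use for the remaining inequality.

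For the leftmost inequality I would bring in the B.f.s.\ $[T,X]$, whose norm is $\|f\|_{[T,X]}=\sup_{|h|\le|f|}\|T(h)\|_X$ by \eqref{TX-norm}. By Proposition~\ref{prop 4.5} the space $[T,X]$ is a B.f.s., hence complete, and by Theorem~\ref{theo-10} (equivalently, by the equivalence (a)$\Leftrightarrow$(d) in Proposition~\ref{cor-8}) we have $[T,X]=X$ as sets for the r.i.\ spaces $X$ in the stated Boyd-index range. The inclusion $X\hookrightarrow[T,X]$ is continuous, since $\|f\|_{[T,X]}\le\|T_X\|\cdot\|f\|_X$ for $f\in X$ (as in the proof of Theorem~\ref{theorem 4.6}). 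Thus the identity map is a continuous linear bijection of the Banach space $X$ onto the Banach space $[T,X]$, so by the open mapping theorem its inverse is bounded: there is $M>0$ with $\|f\|_X\le M\,\|f\|_{[T,X]}$ for every $f\in X$. Combining this with the last inequality of \eqref{norms} gives
$$
\|f\|_X\le M\,\|f\|_{[T,X]}=M\sup_{|h|\le|f|}\|T_X(h)\|_X\le 4M\sup_{A\in\mathcal B}\|T_X(\chi_A f)\|_X ,
$$
and setting $C_X:=1/M$ produces $\tfrac{C_X}{4}\|f\|_X\le\sup_{A\in\mathcal B}\|T_X(\chi_A f)\|_X$, as required.

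I do not anticipate a real obstacle here: the statement is essentially a repackaging of results already in hand. The only step that carries any content is the norm-equivalence of $\|\cdot\|_X$ and $\|\cdot\|_{[T,X]}$, and its two prerequisites — completeness of $[T,X]$ (Lemma~\ref{NEW-4.4}, Proposition~\ref{prop 4.5}) and the set-theoretic identity $[T,X]=X$ (Theorem~\ref{theo-10}) — have been established earlier; the open mapping theorem then does the rest. If anything needs care, it is merely the bookkeeping of constants, i.e.\ tracking the factor $4$ from the semivariation estimate through to the final value of $C_X$.
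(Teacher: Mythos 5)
Your proposal is correct and follows essentially the same route as the paper: the outer inequalities from the boundedness of $T_X$ and from \eqref{norms}, and the leftmost one from the identity $[T,X]=X$ (established in the proof of Theorem \ref{theo-10}) combined with the completeness of $[T,X]$, the continuous inclusion $X\subseteq[T,X]$, and the factor $4$ from \eqref{norms}. The only difference is that you make the open mapping theorem step explicit, which the paper leaves implicit in its ``Hence, there exists a constant $C_X>0$''.
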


\begin{proof} 
The final inequality  is clear from
$$
\|T_X(h)\|_X\le \|T_X\|\cdot \|h\|_X\le \|T_X\|\cdot \|f\|_X
$$
for every $f\in X$ and every $h\in L^0$ with $|h|\le|f|$.

It was shown in the proof of Theorem \ref{theo-10}  that $[T,X]=X$. Hence,
there exists a constant $C_X>0$ such that
$$
C_X\|f\|_X\le \sup_{|h|\le|f|} \|T_X(h)\|_X,\quad f\in X.
$$

The   remaining inequalities now follow from \eqref{norms} which is applicable because 
if $f\in X$, then condition (c) in Proposition \ref{prop-7} is surely satisfied.
\end{proof}

\begin{remark} 
The notion of the optimal domain $[T,X]$ is meaningful  for a large family of operators acting on function spaces, 
as already commented in the Introduction. Amongst them, in a much simpler situation, are the positive operators. For a thorough study
of this topic see, for example, 
\cite{okada-ricker-sanchez} and the references therein.
\end{remark}

%%%%%%%%%%%%%%%%%%%%%%%%%%%%%%%%%%%%%%%%%%%%%%

\section{The finite Hilbert transform on $L^2$}
\label{S5}

%%%%%%%%%%%%%%%%%%%%%%%%%%%%%%%%%%%%%%%%%%%%%%%

Theorems \ref{theo-3} and \ref{theo-4} are not applicable to $X=L^2$. 
Moreover,  $T_{L^2}$ is not
Fredholm and no inversion formula is available. Nevertheless,
it turns out that no extension of $T_{L^2}$ is possible. A new approach is 
needed to establish this. Trying to use the  results and techniques 
obtained for the cases $p\not=2$ in an attempt to study 
the possible extension of $T_{L^2}\colon L^2\to L^2$ 
is futile as shown  by the  following  consideration.
Let $X=L^p$ for  $1<p<2$ and set
$T_p:=T_{L^p}$. Since
$\underline{\alpha}_X= \overline{\alpha}_X=1/p\in(1/2,1)$, we are in the setting of 
Theorem \ref{theo-3}. The left-inverse of $T_p$ is the operator  
$\widehat{T}_p:=\widehat T_{L^p}$, defined by \eqref{T-hat}, that is, 
$$
\widehat{T}_p(f)(x):=\frac{-1}{\sqrt{1-x^2}}\, T_p(\sqrt{1-t^2}f(t))(x),
\quad \mathrm{a.e. }\; x\in (-1,1),
$$
which maps $L^p$ into $L^p$ and is an isomorphism onto its range. 
We estimate from below the operator 
norm of $\widehat{T}_p$. Since $T_p(\sqrt{1-t^2})(x)=-x$,   for $f:=\mathbf{1}$ we obtain
$$
\|\widehat{T}_p\|\ge \frac{\|x/\sqrt{1-x^2}\|_{L^p}}{\|\mathbf{1}\|_{L^p}}
=
\left(\frac12\int_{-1}^{1}\frac{|x|^p}{(1-x^2)^{p/2}}\,dx\right)^{1/p}
$$
which goes to $\infty$ as $p\to2^-$.

We denote
by $T_2$ the finite Hilbert transform  $T_{L^2}\colon L^2\to L^2$. The norm
$\|\cdot\|_{L^2}$ will simply be denoted by $\|\cdot\|_2$.

\begin{lemma}\label{lem-11}
For every  set $A\in\mathcal{B}$ we have
$$
\left\|T_2(\chi_A)\right\|_2 
\ge 
\left(\int_0^\infty\frac{4\lambda}{e^{\pi\lambda}+1}d\lambda\right)^{1/2} |A|^{1/2}.
$$
\end{lemma}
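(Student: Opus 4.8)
The plan is to reduce the bound to a pointwise‐in‐$\lambda$ estimate for the distribution function of $T_2(\chi_A)$ and to obtain that estimate by comparison with the Hilbert transform $H$ on the whole line. Since $T(\chi_A)=\chi_{(-1,1)}H(\chi_A)$ (with $\chi_A$ regarded as a function on $\R$ supported in $A\subseteq(-1,1)$), the layer–cake formula gives
\[
\|T_2(\chi_A)\|_2^2=\int_0^\infty 2\lambda\,\big|\{t\in(-1,1):|H(\chi_A)(t)|>\lambda\}\big|\,d\lambda ,
\]
so it suffices to prove the pointwise lower bound
\[
\big|\{t\in(-1,1):|H(\chi_A)(t)|>\lambda\}\big|\ \ge\ \frac{2|A|}{e^{\pi\lambda}+1},\qquad\lambda>0,
\]
because $\int_0^\infty 2\lambda\cdot\frac{2|A|}{e^{\pi\lambda}+1}\,d\lambda=|A|\int_0^\infty\frac{4\lambda}{e^{\pi\lambda}+1}\,d\lambda$ (an absolutely convergent integral, in fact equal to $\tfrac13$). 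To establish the displayed inequality I would start from Boole's equality $\big|\{t\in\R:|H(\chi_A)(t)|>\lambda\}\big|=\frac{2|A|}{\sinh\pi\lambda}$, valid for any measurable $A$ with $|A|<\infty$, and then subtract a sharp upper bound for the part of this level set lying outside $(-1,1)$.

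The crucial step is this ``leakage'' estimate. For $t>1$ one has $|H(\chi_A)(t)|=\frac1\pi\int_A\frac{dx}{t-x}$, which is continuous and strictly decreasing on $(1,\infty)$ and tends to $0$ at infinity; hence $\{t>1:|H(\chi_A)(t)|>\lambda\}$ is either empty (then the right–hand leakage $L_+$ is $0$) or an interval $(1,t^\ast)$ with $t^\ast\in(1,\infty)$, and in the latter case $\frac1\pi\int_A\frac{dx}{t^\ast-x}=\lambda$, i.e.\ $L_+=t^\ast-1$. Now $x\mapsto\frac1{t^\ast-x}$ is positive and increasing on $(-1,1)$, so the bathtub (Hardy--Littlewood) principle, using $|A|\le2$, yields
\[
\pi\lambda=\int_A\frac{dx}{t^\ast-x}\ \le\ \int_{1-|A|}^{1}\frac{dx}{t^\ast-x}=\log\frac{L_+ +|A|}{L_+},
\]
whence $e^{\pi\lambda}\le 1+\frac{|A|}{L_+}$ and therefore $L_+\le\frac{|A|}{e^{\pi\lambda}-1}$. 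The identical argument on $(-\infty,-1)$ — where the level set is an interval $(-1-L_-,-1)$ and $x\mapsto\frac1{x+1+L_-}$ is \emph{decreasing}, so one compares with $A$ pushed towards $-1$ — gives $L_-\le\frac{|A|}{e^{\pi\lambda}-1}$. Hence
\[
\big|\{t\in\R\setminus[-1,1]:|H(\chi_A)(t)|>\lambda\}\big|\ \le\ \frac{2|A|}{e^{\pi\lambda}-1}.
\]

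Subtracting this from Boole's equality gives $\big|\{t\in(-1,1):|H(\chi_A)(t)|>\lambda\}\big|\ge 2|A|\big(\frac1{\sinh\pi\lambda}-\frac1{e^{\pi\lambda}-1}\big)$, and the elementary identity $\frac1{\sinh\pi\lambda}-\frac1{e^{\pi\lambda}-1}=\frac1{e^{\pi\lambda}+1}$ produces exactly the required pointwise bound; integrating against $2\lambda\,d\lambda$ and taking square roots finishes the proof. Note that no separate approximation of $A$ by finite unions of intervals is needed, since the level sets considered above are honest intervals for an arbitrary measurable $A\subseteq(-1,1)$. The only non‑routine ingredient is the leakage estimate — one has to notice that $|H(\chi_A)|$ is monotone on each component of $\R\setminus[-1,1]$ and then apply the rearrangement bound; everything else is bookkeeping with elementary identities — so I do not anticipate a serious obstacle.
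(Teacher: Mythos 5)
Your proof is correct, and it reaches the paper's key pointwise bound $\bigl|\{t\in(-1,1):|H(\chi_A)(t)|>\lambda\}\bigr|\ge 2|A|/(e^{\pi\lambda}+1)$ by a genuinely different route. The paper simply invokes Laeng's theorem (the exact distribution of $H(\chi_A)$ \emph{restricted to $A$}, namely $2|A|/(e^{\pi\lambda}+1)$ for an arbitrary measurable set of finite measure) and uses $A\subseteq(-1,1)$ to pass to the level set in $(-1,1)$; the layer--cake integration is then identical to yours. You instead start from the classical global Stein--Weiss/Boole formula $2|A|/\sinh(\pi\lambda)$ and subtract a self-contained ``leakage'' bound $2|A|/(e^{\pi\lambda}-1)$ for the part of the level set outside $[-1,1]$, obtained by observing that $|H(\chi_A)|$ is monotone on each component of $\mathbb{R}\setminus[-1,1]$ and applying the bathtub principle at the endpoint $t^\ast$ of the (interval) level set; the identity $1/\sinh(\pi\lambda)-1/(e^{\pi\lambda}-1)=1/(e^{\pi\lambda}+1)$ then gives the same constant. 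All the individual steps check out (monotonicity and continuity of $t\mapsto\int_A dx/(t-x)$ on $(1,\infty)$, the rearrangement inequality, the hyperbolic identity, and the convergence of $\int_0^\infty 4\lambda(e^{\pi\lambda}+1)^{-1}d\lambda=1/3$). What your approach buys is that it only requires the more classical whole-line distributional formula rather than Laeng's refinement for the restriction to $A$, at the cost of an extra (elementary) estimate; it also bounds a slightly larger set than the paper does (the level set in $(-1,1)$ rather than in $A$), though both bounds happen to coincide numerically.
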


\begin{proof}
We rely on a consequence of the Stein-Weiss formula for
the distribution function of the Hilbert transform of a characteristic function,
due to Laeng, 
\cite[Theorem 1.2]{laeng}. Namely, for $A\subseteq\mathbb{R}$ with $|A|<\infty$, we have
$$
|\{x\in A:\left| H(\chi_A)(x))\right|>\lambda\}| = 
\frac{2|A|}{e^{\pi\lambda}+1},\quad \lambda>0.
$$
For $A\in\mathcal{B}$, it follows from properties of the distribution function for
$T_2(\chi_A)$ that 
\begin{align*}
\|T_2(\chi_A)\|^2_2
&=
\int_0^\infty 2\lambda \cdot |\{x\in (-1,1):\left| T_2(\chi_A)(x)\right|>\lambda\}|\,d\lambda
\\
&\ge
\int_0^\infty 2\lambda \cdot |\{x\in A:\left| H(\chi_A)(x)\right|>\lambda\}|\,d\lambda
\\
&= 
|A|\int_0^\infty  \frac{4\lambda}{e^{\pi\lambda}+1}\,d\lambda.
\end{align*}
\end{proof}

%%%%%%%%%%%%%%%%%%%%%%%%%%%%%%%%%%%%%%%%%%%

The approach we use for proving the impossibility of
extending $T_2$ is to show that $L^2$ coincides with
the B.f.s.\  $[T,L^2]$. For this, we need to compare
the norm in $L^2$ with the norm in $[T,L^2]$.

\begin{theorem}\label{theo-12}
For each  function $\phi\in\mathrm{sim }\;\mathcal{B}$ we have

\begin{equation*}\label{FHT-inq2}
\left(\int_0^\infty\frac{4\lambda}{e^{\pi\lambda}+1}d\lambda\right)^{1/2} 
\|\phi\|_2 \le \sup_{|\theta|=1}\big\|T_2(\theta \phi)\big\|_2 .
\end{equation*}
\end{theorem}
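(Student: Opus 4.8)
The plan is to reduce the general simple function to the case of a characteristic function, for which the desired lower bound is exactly Lemma \ref{lem-11}, by averaging over sign changes of $\phi$. Write $\phi=\sum_{j=1}^{n}c_{j}\chi_{A_{j}}$ with $c_{j}\in\C$ and $A_{1},\dots,A_{n}\in\mathcal{B}$ pairwise disjoint, so that $\|\phi\|_{2}^{2}=\sum_{j=1}^{n}|c_{j}|^{2}\,|A_{j}|$. First I would record, for each sign vector $\varepsilon=(\varepsilon_{1},\dots,\varepsilon_{n})\in\{-1,1\}^{n}$, the admissible multiplier $\theta_{\varepsilon}:=\sum_{j=1}^{n}\varepsilon_{j}\chi_{A_{j}}+\chi_{(-1,1)\setminus\bigcup_{j}A_{j}}$, which lies in $L^{\infty}$ and satisfies $|\theta_{\varepsilon}|=1$ a.e. Since $\phi$ vanishes off $\bigcup_{j}A_{j}$, the value of $\theta_{\varepsilon}$ there is irrelevant and linearity of $T_{2}$ gives $T_{2}(\theta_{\varepsilon}\phi)=\sum_{j=1}^{n}\varepsilon_{j}c_{j}\,T_{2}(\chi_{A_{j}})$.

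Next I would average $\|T_{2}(\theta_{\varepsilon}\phi)\|_{2}^{2}$ over the $2^{n}$ choices of $\varepsilon$. Putting $g_{j}:=c_{j}\,T_{2}(\chi_{A_{j}})\in L^{2}$ and using that the $\varepsilon_{k}$ are real, we have $\|T_{2}(\theta_{\varepsilon}\phi)\|_{2}^{2}=\sum_{j,k}\varepsilon_{j}\varepsilon_{k}\langle g_{j},g_{k}\rangle$, and since $2^{-n}\sum_{\varepsilon\in\{-1,1\}^{n}}\varepsilon_{j}\varepsilon_{k}=\delta_{jk}$ the off-diagonal terms cancel in the mean:
$$2^{-n}\sum_{\varepsilon\in\{-1,1\}^{n}}\|T_{2}(\theta_{\varepsilon}\phi)\|_{2}^{2}=\sum_{j=1}^{n}\|g_{j}\|_{2}^{2}=\sum_{j=1}^{n}|c_{j}|^{2}\,\|T_{2}(\chi_{A_{j}})\|_{2}^{2}.$$
Now I would invoke Lemma \ref{lem-11}, which gives $\|T_{2}(\chi_{A_{j}})\|_{2}^{2}\ge\bigl(\int_{0}^{\infty}\frac{4\lambda}{e^{\pi\lambda}+1}\,d\lambda\bigr)\,|A_{j}|$ for each $j$; summing over $j$ shows that the average above is at least $\bigl(\int_{0}^{\infty}\frac{4\lambda}{e^{\pi\lambda}+1}\,d\lambda\bigr)\,\|\phi\|_{2}^{2}$.

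Finally, the maximum over $\varepsilon$ of a nonnegative quantity dominates its average, so there is a sign vector $\varepsilon^{0}$ with $\|T_{2}(\theta_{\varepsilon^{0}}\phi)\|_{2}^{2}\ge\bigl(\int_{0}^{\infty}\frac{4\lambda}{e^{\pi\lambda}+1}\,d\lambda\bigr)\,\|\phi\|_{2}^{2}$; since $\theta_{\varepsilon^{0}}$ is a competitor in $\sup_{|\theta|=1}$, taking square roots yields the assertion. There is no genuinely hard step here; the only point needing care is the remark that $\theta_{\varepsilon}$ off $\bigcup_{j}A_{j}$ does not affect $T_{2}(\theta_{\varepsilon}\phi)$, which is precisely what turns $\|T_{2}(\theta_{\varepsilon}\phi)\|_{2}^{2}$ into a signed sum of the $\langle T_{2}(\chi_{A_{j}}),T_{2}(\chi_{A_{k}})\rangle$ and makes the orthogonality-of-signs averaging applicable. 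One could phrase the averaging probabilistically with independent Rademacher variables instead, but the finite mean over $\{-1,1\}^{n}$ is entirely elementary.
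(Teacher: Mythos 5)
Your argument is correct and is essentially the paper's own proof: the paper performs the same Rademacher averaging over sign vectors (phrased as comparing the $L^2(\tau)$ and $L^\infty(\tau)$ norms of $\sigma\mapsto\|T_2(\sum_n\sigma_na_n\chi_{A_n})\|_2$ over the product measure on $\{-1,1\}^N$), uses the same multipliers $\theta_\sigma$ extended by $1$ off $\bigcup A_n$, and invokes Lemma \ref{lem-11} in exactly the same way. No substantive difference.
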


\begin{proof}
In order to prove the claim, fix any simple function 
$\phi=\sum_{n=1}^N a_n\chi_{A_n},$
with $a_n,\dots,a_N\in\C$ and pairwise disjoint sets 
$A_1,\dots, A_N\in\mathcal{B}$ with $N\in\N$.

Let $\tau$ denote the product measure on $\Lambda:=\{-1,1\}^N$ for the uniform probability 
on $\{-1,1\}$. Thus, given $\sigma\in\Lambda$ we have 
$\sigma=(\sigma_1,\dots,\sigma_N)$ with 
$\sigma_n=\pm1$ for $n=1,\dots, N$. Note that the coordinate projections 
$$
P_n:\sigma\in\Lambda\mapsto \sigma_n\in\{-1,1\}, \quad n=1,\dots,N,
$$
form an orthonormal set, i.e.,
\begin{equation}\label{orto}
\int_\Lambda P_jP_k\,d\tau=\int_\Lambda \sigma_j\sigma_k\,d\tau(\sigma)
=\delta_{j,k},\quad j,k=1,\dots,N.
\end{equation}

The function $F\colon\Lambda\to[0,\infty)$ defined by
$$
F(\sigma):=\left\|T_2\left(\sum_{n=1}^N \sigma_na_n\chi_{A_n}\right)\right\|_2,
\quad \sigma\in\Lambda,
$$
is bounded and measurable and so satisfies
\begin{equation}\label{2-oo}
\left\|F\right\|_{L^2(\tau)}\le \left\|F\right\|_{L^\infty(\tau)}.
\end{equation}
We now compute both of the norms in \eqref{2-oo} explicitly.

Given $\sigma=(\sigma_n)\in\Lambda$, the  measurable function defined on $(-1,1)$ by
$$
t\mapsto\theta_\sigma(t):=\chi_{(-1,1)\setminus(\cup_{n=1}^NA_n)}(t) + 
\sum_{n=1}^N\sigma_n\chi_{A_n}(t)
$$
satisfies $|\theta_\sigma|=1$ and  
$$
\theta_\sigma\phi=\sum_{n=1}^N\sigma_na_n\chi_{A_n}.
$$
Consequently, 
$$
T_2\big(\theta_\sigma\phi\big)=T_2\Big(\sum_{n=1}^N\sigma_na_n\chi_{A_n}\Big),
$$
from which   it is clear that
\begin{equation}\label{signs}
\left\|F\right\|_{L^\infty(\tau)}
=\sup_{\sigma\in\Lambda}
\bigg\|T_2\bigg(\sum_{n=1}^N \sigma_na_n\chi_{A_n}\bigg)\bigg\|_{2}
\le
\sup_{|\theta|=1}\big\|T_2(\theta \phi)\big\|_{2}.
\end{equation}

Set $\beta:=\big(\int_0^\infty
\frac{4\lambda}{e^{\pi\lambda}+1}d\lambda\big)^{1/2}$. 
By Fubini's theorem, \eqref{orto} and Lemma \ref{lem-11} it follows that
\begin{align*}
\left\|F\right\|^2_{L^2(\tau)}
&=
\int_\Lambda
\bigg\|T_2\bigg(\sum_{n=1}^N \sigma_na_n\chi_{A_n}\bigg)\bigg\|^2_2
\,d\tau(\sigma)
=
\int_\Lambda\int_{-1}^{1}
\bigg|\sum_{n=1}^N \sigma_na_nT_2(\chi_{A_n})(t)\bigg|^2dt
\,d\tau(\sigma)
\\
&=
\int_{-1}^{1}\int_\Lambda
\bigg|\sum_{n=1}^N \sigma_na_nT_2(\chi_{A_n})(t)\bigg|^2\,d\tau(\sigma)\,dt
=
\int_{-1}^{1}\sum_{n=1}^N \left|a_nT_2(\chi_{A_n})(t)\right|^2\,dt
\\
&=
\sum_{n=1}^N|a_n|^2\Big\|T_2(\chi_{A_n})\Big\|^2_2
\ge
\beta^2\sum_{n=1}^N |a_n|^2|A_n|
\\
& =  \beta^2\int_{-1}^{1}\bigg|\sum_{n=1}^N a_n\chi_{A_n}(t)\bigg|^2dt
\\
&  = \beta^2\|\phi\|^2_2.
\end{align*}
This inequality, together with \eqref{2-oo} and \eqref{signs}, yields
$$
\beta \|\phi\|_2 \le \sup_{|\theta|=1}\big\|T_2(\theta \phi)\big\|_2.
$$
Since the simple function $\phi$ is arbitrary, this establishes  the result.
\end{proof}

%%%%%%%%%%%%%%%%%%%%%%%%%%%%%%%%%%%%%%%%%

Theorem \ref{theo-12} implies
the impossibility of extending $T_2$.
Note that this does not follow from Theorem \ref{theo-10} since
$L^2$ does not satisfy the restriction on the Boyd indices.

\begin{theorem}\label{theo-14}
The finite Hilbert transform
$T_2\colon L^2\to L^2$ has no continuous, $L^2$-valued extension
to any genuinely larger B.f.s.
\end{theorem}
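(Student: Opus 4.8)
The plan is to reduce the statement to the identity $[T,L^2]=L^2$ and then invoke the optimal-domain machinery already developed. First I would observe that $L^2$ has Boyd indices $\underline{\alpha}_{L^2}=\overline{\alpha}_{L^2}=1/2\in(0,1)$, so Proposition \ref{prop 4.5} guarantees that $[T,L^2]$ is a B.f.s., and Theorem \ref{theorem 4.6} tells us that $[T,L^2]$ is the \emph{largest} B.f.s.\ containing $L^2$ to which $T_2\colon L^2\to L^2$ admits a continuous, linear, $L^2$-valued extension. Consequently, to exclude any genuinely larger extension it suffices to prove the inclusion $[T,L^2]\subseteq L^2$, the reverse inclusion being automatic; that is, one must show that every $f\in[T,L^2]$ already lies in $L^2$.

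To this end I would fix $f\in[T,L^2]$; since $\|\,|f|\,\|_{[T,L^2]}=\|f\|_{[T,L^2]}$ we may assume $0\le f$. Choose simple functions $\phi_n\in\mathrm{sim}\,\mathcal{B}$ with $0\le\phi_n\uparrow f$ pointwise a.e. As $\phi_n\le f$, the lattice compatibility of $\|\cdot\|_{[T,L^2]}$ gives $\|\phi_n\|_{[T,L^2]}\le\|f\|_{[T,L^2]}$ for all $n$. On the other hand, Theorem \ref{theo-12} applied to the simple function $\phi_n$, combined with the chain \eqref{norms} of Proposition \ref{prop-7} (applicable because each $\phi_n\in L^\infty\subseteq L^2$ trivially satisfies condition (c)), yields, with $\beta:=\big(\int_0^\infty 4\lambda/(e^{\pi\lambda}+1)\,d\lambda\big)^{1/2}>0$,
$$
\beta\,\|\phi_n\|_2\;\le\;\sup_{|\theta|=1}\big\|T_2(\theta\phi_n)\big\|_2\;\le\;\sup_{|h|\le|\phi_n|}\big\|T_2(h)\big\|_2\;=\;\|\phi_n\|_{[T,L^2]}\;\le\;\|f\|_{[T,L^2]},
$$
so $\|\phi_n\|_2\le\beta^{-1}\|f\|_{[T,L^2]}$ uniformly in $n$. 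Letting $n\to\infty$, the Monotone Convergence Theorem gives $\|\phi_n\|_2\uparrow\|f\|_2$, whence $f\in L^2$ with $\|f\|_2\le\beta^{-1}\|f\|_{[T,L^2]}$. This proves $[T,L^2]\subseteq L^2$, hence $[T,L^2]=L^2$, and by the maximality in Theorem \ref{theorem 4.6} no B.f.s.\ strictly larger than $L^2$ can carry a continuous $L^2$-valued extension of $T_2$.

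I do not expect a serious obstacle in this final assembly: the genuine content is already contained in Lemma \ref{lem-11} (the lower bound for $\|T_2(\chi_A)\|_2$ via Laeng's sharpening of the Stein--Weiss formula) and in Theorem \ref{theo-12} (its Khintchine-type upgrade to arbitrary simple functions through the Rademacher orthogonality \eqref{orto}). What remains is a routine lattice/Fatou limiting argument; the only mild point of care is to invoke precisely the correct inequality from \eqref{norms} in order to pass from $\sup_{|\theta|=1}\|T_2(\theta\phi_n)\|_2$ up to the $[T,L^2]$-norm, and to note that $L^2$ legitimately enters the framework of Section \ref{S4} even though it falls outside the Boyd-index restrictions needed in Theorem \ref{theo-10}.
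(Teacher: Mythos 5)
Your proposal is correct and follows essentially the same route as the paper: both reduce the statement to the identity $[T,L^2]=L^2$ via the maximality of the optimal domain (Theorem \ref{theorem 4.6}), and both obtain $[T,L^2]\subseteq L^2$ by applying Theorem \ref{theo-12} together with \eqref{norms} to simple functions dominated by $|f|$ and then passing to the limit. The only cosmetic difference is that you use an increasing sequence of simple functions with monotone convergence where the paper takes a supremum over all dominated simple functions.
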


\begin{proof}
We follow the approach used for proving Theorem \ref{theo-10} to show that
$$
L^2=[T_2,L^2]:=\big\{f\in L^1: T_2(h)\in L^2,\;\forall |h|\le|f|\big\}.
$$
Note first note that
\begin{equation}\label{aa}
\beta \|\phi\|_2 \le \sup_{|\theta|=1}\big\|T_2(\theta \phi)\big\|_2 
\le \sup_{|h|\le|\phi|}\big\| T_2(h)\big\|_2,
\quad \phi\in\mathrm{sim }\;\mathcal{B} .
\end{equation}
The left-hand inequality is  Theorem \ref{theo-12}.
The right-hand inequality is clear from \eqref{norms}.

Let $f\in[T,L^2]$.
According to \eqref{aa}, 
for every $\phi\in\mathrm{sim }\;\mathcal{B}$ satisfying $|\phi|\le |f|$  
it follows that
$$
\beta\|\phi\|_2 \le
\sup_{|h|\le|f|}\big\|T_2(h)\big\|_2
=\|f\|_{[T,L^2]}.
$$
Taking the supremum with respect to all such  $\phi$ yields 
$\beta\|f\|_2 \le  \|f\|_{[T,L^2]}$.
This implies that $f\in L^2$. Consequently, 
$[T,L^2]=L^2$ with equivalent norms.
\end{proof}

%%%%%%%%%%%%%%%%%%%%%%%%%%%%%%%%%%%%%%%%

A further  consequence of Theorem \ref{theo-12} 
leads to various equivalent norms, in terms 
of the operator $T_2$, to the standard norm $\|\cdot\|_2$ in $L^2$.
As before, note that this does not follow from Corollary \ref{cor-10} since
$L^2$ does not satisfy the restriction on the Boyd indices.
Recall that $\beta:=\big(\int_0^\infty
\frac{4\lambda}{e^{\pi\lambda}+1}d\lambda\big)^{1/2}$.

\begin{corollary}\label{cor-14}
For every $f\in L^2$, we have 
$$
\frac{\beta}{4}\|f\|_2
\le  \sup_{A\in\mathcal{B}}\big\|T_2(\chi_A f)\big\|_2
\le \sup_{|\theta|=1}\big\|T_2(\theta f)\big\|_2
\le \sup_{|h|\le|f|}\big\|T_2(h)\big\|_2
\le \|f\|_2 .
$$
\end{corollary}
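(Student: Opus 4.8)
The plan is to establish the four-term chain by combining the lower bound from Theorem~\ref{theo-12} (via a density argument to pass from simple functions to arbitrary $f\in L^2$) with the semivariation inequalities already recorded in the proof of Proposition~\ref{prop-7}. Concretely, the three rightmost inequalities are essentially automatic: the bound $\sup_{A}\|T_2(\chi_A f)\|_2\le\sup_{|\theta|=1}\|T_2(\theta f)\|_2$ follows by writing $\chi_A=\tfrac12(1+(\chi_A-\chi_{(-1,1)\setminus A}))$; the bound $\sup_{|\theta|=1}\|T_2(\theta f)\|_2\le\sup_{|h|\le|f|}\|T_2(h)\|_2$ holds since each unimodular multiple of $f$ is dominated by $|f|$; and $\sup_{|h|\le|f|}\|T_2(h)\|_2\le\|f\|_2$ is just the boundedness of $T_2$ on $L^2$ (which has norm $1$). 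These are exactly the middle inequalities in \eqref{norms} specialised to $X=L^2$, so I would simply invoke \eqref{norms} (applicable because every $f\in L^2$ satisfies condition~(c) of Proposition~\ref{prop-7}) together with $\|T_2\|=1$.

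The substantive point is the leftmost inequality $\tfrac{\beta}{4}\|f\|_2\le\sup_{A\in\mathcal{B}}\|T_2(\chi_A f)\|_2$. First I would upgrade Theorem~\ref{theo-12} from simple functions to all of $L^2$: given $f\in L^2$, choose simple functions $\phi_k\to f$ in $L^2$ with $|\phi_k|\le|f|$; then $\beta\|\phi_k\|_2\le\sup_{|\theta|=1}\|T_2(\theta\phi_k)\|_2$, and since each $\theta\phi_k$ converges to $\theta f$ in $L^2$ uniformly in $\theta$ (because $\|\theta\phi_k-\theta f\|_2=\|\phi_k-f\|_2$) and $T_2$ is bounded, passing to the limit yields $\beta\|f\|_2\le\sup_{|\theta|=1}\|T_2(\theta f)\|_2$ for every $f\in L^2$. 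Next I would connect the supremum over unimodular $\theta$ to the supremum over characteristic functions. This is precisely the content of the first inequality in \eqref{norms}, which asserts $\sup_{|\theta|=1}\|T(\theta f)\|_X\le 4\sup_{A\in\mathcal{B}}\|T(\chi_A f)\|_X$; combining this with $\beta\|f\|_2\le\sup_{|\theta|=1}\|T_2(\theta f)\|_2$ gives $\beta\|f\|_2\le 4\sup_{A}\|T_2(\chi_A f)\|_2$, i.e.\ $\tfrac{\beta}{4}\|f\|_2\le\sup_{A}\|T_2(\chi_A f)\|_2$, as required.

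I anticipate the only real care needed is the limiting argument in the first paragraph above: one must ensure that the supremum over the (infinite-dimensional) family $\{\theta:|\theta|=1\}$ behaves well under the approximation $\phi_k\to f$. The key observation making this painless is that the map $\theta\mapsto\theta g$ is an isometry of $L^2$ for fixed $g$ (multiplication by a unimodular function), so $\sup_{|\theta|=1}\bigl|\,\|T_2(\theta\phi_k)\|_2-\|T_2(\theta f)\|_2\,\bigr|\le\|T_2\|\,\sup_{|\theta|=1}\|\theta(\phi_k-f)\|_2=\|\phi_k-f\|_2\to0$, so the suprema converge and the inequality survives the limit. Everything else is a direct citation of \eqref{norms} and Theorem~\ref{theo-12}, so the corollary follows with no further obstacle.

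\begin{proof}
The three right-hand inequalities are exactly the middle chain of \eqref{norms} applied with $X=L^2$ (legitimate since every $f\in L^2$ satisfies condition~(c) of Proposition~\ref{prop-7}), together with the fact that $\|T_2\|=1$, which gives $\sup_{|h|\le|f|}\|T_2(h)\|_2\le\|f\|_2$.

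It remains to prove $\tfrac{\beta}{4}\|f\|_2\le\sup_{A\in\mathcal{B}}\|T_2(\chi_A f)\|_2$ for every $f\in L^2$. We first extend Theorem~\ref{theo-12} to all of $L^2$. Fix $f\in L^2$ and pick simple functions $\phi_k$ with $|\phi_k|\le|f|$ and $\phi_k\to f$ in $L^2$. For any $\theta\in L^\infty$ with $|\theta|=1$ we have $\|\theta\phi_k-\theta f\|_2=\|\phi_k-f\|_2$, so, since $\|T_2\|=1$,
\begin{equation*}
\Bigl|\,\|T_2(\theta\phi_k)\|_2-\|T_2(\theta f)\|_2\,\Bigr|\le\|\phi_k-f\|_2,\qquad|\theta|=1,
\end{equation*}
and therefore $\sup_{|\theta|=1}\|T_2(\theta\phi_k)\|_2\to\sup_{|\theta|=1}\|T_2(\theta f)\|_2$ as $k\to\infty$. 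Likewise $\|\phi_k\|_2\to\|f\|_2$. Passing to the limit in the inequality $\beta\|\phi_k\|_2\le\sup_{|\theta|=1}\|T_2(\theta\phi_k)\|_2$ of Theorem~\ref{theo-12} yields
\begin{equation*}
\beta\|f\|_2\le\sup_{|\theta|=1}\|T_2(\theta f)\|_2,\qquad f\in L^2.
\end{equation*}
Finally, the first inequality in \eqref{norms}, applied with $X=L^2$, gives $\sup_{|\theta|=1}\|T_2(\theta f)\|_2\le 4\sup_{A\in\mathcal{B}}\|T_2(\chi_A f)\|_2$. Combining the last two displays produces $\beta\|f\|_2\le 4\sup_{A\in\mathcal{B}}\|T_2(\chi_A f)\|_2$, which is the desired left-hand inequality.
\end{proof}
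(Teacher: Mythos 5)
Your proof is correct, and the right-hand three inequalities are handled exactly as in the paper (via \eqref{norms} and $\|T_2\|=1$). For the essential left-hand inequality, however, you take a genuinely different route. The paper first proves a monotonicity lemma, inequality \eqref{nueva}: for $h,f\in L^2$ with $|h|\le|f|$ one has $\sup_{|\theta|=1}\|T(\theta h)\|_2\le\sup_{|\tilde\theta|=1}\|T(\tilde\theta f)\|_2$, established by a duality argument using Parseval's formula; it then applies this with $h=\phi$ a simple function dominated by $|f|$, invokes Theorem~\ref{theo-12}, and takes the supremum over such $\phi$ (using $\sup\{\|\phi\|_2:\phi\ \text{simple},\ |\phi|\le|f|\}=\|f\|_2$). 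You instead observe that $g\mapsto\sup_{|\theta|=1}\|T_2(\theta g)\|_2$ is $1$-Lipschitz on $L^2$ (since multiplication by a unimodular $\theta$ is an $L^2$-isometry and $\|T_2\|=1$), so Theorem~\ref{theo-12} extends from simple functions to all of $L^2$ by density. Your argument is more economical: it bypasses the Parseval computation behind \eqref{nueva} entirely. What the paper's approach buys in exchange is the monotonicity statement \eqref{nueva} itself, i.e.\ that $f\mapsto\sup_{|\theta|=1}\|T_2(\theta f)\|_2$ is a lattice functional, which is of some independent interest. One small citation nit: the bound $\sup_{|\theta|=1}\|T_2(\theta f)\|_2\le 4\sup_{A}\|T_2(\chi_A f)\|_2$ is not the \emph{first} inequality of \eqref{norms} but the composition of its second and third inequalities; the substance of your use of \eqref{norms} is nonetheless correct.
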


\begin{proof}
The last inequality follows (since $\|\cdot\|_2$ is a lattice norm 
and $\|T_2\|=1$, \cite{mclean-elliot}) via
$$
\|T_2(h)\|_2\le \|T_2\|\cdot\|h\|_2\le \|f\|_2,\quad |h|\le |f| .
$$

If $f\in L^2$, then surely (c) of  Proposition \ref{prop-7} is satisfied
with $X=L^2$. Hence the second and third inequalities follow from \eqref{norms}.

Finally, in order to prove the first inequality, 
we begin by establishing, for $h,f\in L^2$ satisfying $|h|\le|f|$, that 
\begin{equation}\label{nueva}
\sup_{|\theta|=1}\big\|T(\theta h)\big\|_2
\le
\sup_{|\tilde\theta|=1}\big\|T(\tilde\theta f)\big\|_2 .
\end{equation}
Fix $\theta$ with $|\theta|=1$. Then, via 
Parseval's formula, 
for some function $\tilde\theta_{f,g}$ satisfying $|\tilde\theta_{f,g}|=1$, we have  
\begin{align*}
\big\|T_2(\theta h)\big\|_2
&= 
\sup_{\|g\|_2\le1}\left|\int_{-1}^1T_2(\theta h)(t)\cdot g(t)\,dt\right|
=
\sup_{\|g\|_2\le1}\left|\int_{-1}^1\theta(t) h(t)\cdot T_2(g)(t)\,dt\right|
\\
&\le 
\sup_{\|g\|_2\le1}\int_{-1}^1|h(t)|\cdot |T_2(g)(t)|\,dt
\le
\sup_{\|g\|_2\le1}\int_{-1}^1|f(t)| \cdot|T_2(g)(t)|\,dt
\\
&= 
\sup_{\|g\|_2\le1}\int_{-1}^1f(t) \tilde\theta_{f,g} (t)T_2(g)(t)\,dt
\le
\sup_{\|g\|_2\le1}\left|\int_{-1}^1T_2(f \tilde\theta_{f,g})(t) g(t)\,dt\right|
\\
&\le 
\sup_{\|g\|_2\le1}\|T_2(f \tilde\theta_{f,g})\|_2 \|g\||_2
\\
&\le 
\sup_{|\tilde\theta|=1}\|T_2(f \tilde\theta)\|_2 .
\end{align*}
Accordingly, \eqref{nueva} holds.

Fix $f\in L^2$. Then Theorem \ref{theo-12}, together 
with  \eqref{norms} and \eqref{nueva} gives, for 
$\phi\in\mathrm{sim}\;\mathcal{B}$  satisfying $|\phi|\le |f|$,
that
$$
\beta\|\phi\|_2 
\le  
\sup_{|\theta|=1}\big\|T_2(\theta \phi)\big\|_2
\le
\sup_{|\theta|=1}\big\|T_2(\theta f)\big\|_2
\le
4\sup_{A\in\mathcal{B}}\big\|T_2(f\chi_A)\big\|_2.
$$
Taking the supremum with respect to all such simple functions $\phi$ , we arrive at
$$
\beta\|f\|_2 \le  
4\sup_{A\in\mathcal{B}}\big\|T_2(f\chi_A)\big\|_2.
$$
\end{proof}

%%%%%%%%%%%%%%%%%%%%%%%%%%%%%%%%%%%%%%%%%

From Corollary \ref{cor-14} we can deduce conditions,  in terms 
of the finite Hilbert transform, for membership  of $L^2$.

\begin{corollary}\label{cor-15}
Given $f\in L^1$ the following conditions are equivalent.
\begin{itemize}
\item[(a)] $f\in L^2$.
\item[(b)] $T(f\chi_A)\in L^2$ for every $A\in\mathcal{B}$.
\item[(c)] $T(f\theta)\in L^2$ for every  $\theta\in L^\infty$ with $|\theta|=1$ a.e.
\item[(d)] $T(h)\in L^2$ for every $h\in L^0$ with $|h|\le |f|$ a.e.
\end{itemize}
\end{corollary}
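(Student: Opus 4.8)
The plan is to assemble the corollary from two facts already in hand: the cycle of equivalences in Proposition~\ref{prop-7} and the identity $[T,L^2]=L^2$ established in Theorem~\ref{theo-14}. Since $\underline{\alpha}_{L^2}=\overline{\alpha}_{L^2}=1/2\in(0,1)$, the space $X=L^2$ satisfies the hypotheses of Proposition~\ref{prop-7}. Applying that proposition with $X=L^2$ to the fixed $f\in L^1$, its conditions (a), (e), (c) — which are precisely conditions (b), (c), (d) of the present corollary, respectively — are mutually equivalent. So the only remaining task is to tie any one of them to the membership $f\in L^2$.

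First I would dispose of (a)$\Rightarrow$(b): since $T_2\colon L^2\to L^2$ is bounded, $f\in L^2$ forces $T(f\chi_A)=T_2(f\chi_A)\in L^2$ for every $A\in\mathcal{B}$. For the reverse direction I would observe that condition (d) of the corollary, namely $T(h)\in L^2$ for every $h\in L^0$ with $|h|\le|f|$ a.e., together with the standing hypothesis $f\in L^1$, is by definition exactly the assertion $f\in[T,L^2]$, where $[T,L^2]=\{f\in L^1: T_2(h)\in L^2,\ \forall\,|h|\le|f|\}$ is the B.f.s.\ appearing in Theorem~\ref{theo-14}. By that theorem $[T,L^2]=L^2$ (with equivalent norms), hence $f\in[T,L^2]$ is equivalent to $f\in L^2$; that is, (d)$\iff$(a).

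Putting the pieces together: (a)$\iff$(d) by the previous paragraph, while (b)$\iff$(c)$\iff$(d) by Proposition~\ref{prop-7} applied with $X=L^2$; therefore all four conditions are equivalent, which is the claim.

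There is no real obstacle internal to this argument — it is essentially bookkeeping, mirroring how Proposition~\ref{cor-8} is deduced from Proposition~\ref{prop-7} and Theorem~\ref{theo-10} in the r.i.\ case. The substantive content lies upstream: the equivalence hinges on the nontrivial identity $[T,L^2]=L^2$ of Theorem~\ref{theo-14}, whose proof rests in turn on the lower bound $\beta\|\phi\|_2\le\sup_{|\theta|=1}\|T_2(\theta\phi)\|_2$ of Theorem~\ref{theo-12} (a Rademacher-averaging argument combined with Laeng's sharpening of the Stein--Weiss distribution formula for $H(\chi_A)$). If one wanted the corollary to be self-contained, reproving that chain would be the hard part.
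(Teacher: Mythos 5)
Your proposal is correct and takes essentially the same route as the paper: both obtain (b)$\Leftrightarrow$(c)$\Leftrightarrow$(d) from Proposition~\ref{prop-7} with $X=L^2$ and (a)$\Rightarrow$(b) from the boundedness of $T_2$. The only difference is in the remaining implication, where the paper proves (b)$\Rightarrow$(a) by truncating $f$ to $f_n=f\chi_{|f|^{-1}([0,n])}$, applying the inequality of Corollary~\ref{cor-14} and the Monotone Convergence Theorem, while you instead cite the identity $[T,L^2]=L^2$ established in the proof of Theorem~\ref{theo-14}; this is a legitimate shortcut, since that identity is exactly what that proof shows and both arguments ultimately rest on Theorem~\ref{theo-12}.
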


\begin{proof}
(b)$\Leftrightarrow$(c)$\Leftrightarrow$(d) follow from Proposition \ref{prop-7} with $X=L^2$.

(a)$\Rightarrow$(b) Clear as $T_2\colon L^2\to L^2$ is bounded.

(b)$\Rightarrow$(a) For $X=L^2$  it follows that 
condition (b) of Proposition \ref{prop-7} holds, that is, 
$\gamma:=\sup_{A\in\mathcal{B}}\|T(f\chi_A)\|_2<\infty$.
For each $n\in\mathbb{N}$ define $A_n:=|f|^{-1}([0,n])$ and $f_n:=f\chi_{A_n}$. Then
$$
\|T(f_n\chi_A)\|_2=\|T(f\chi_{A\cap A_n})\|_2\le 
\gamma,\quad A\in\mathcal{B}, n\in\mathbb{N},
$$
which implies, via Corollary \ref{cor-14}, that
$$
\|f_n\|_2\le \frac{4\gamma}{\beta},\quad n\in\mathbb{N}.
$$
Since $|f_n|^2\uparrow|f|^2$ pointwise a.e.\ 
on $(-1,1)$, from the Monotone Convergence Theorem
it follows that $f\in L^2$. This is condition (a).
\end{proof}

%%%%%%%%%%%%%%%%%%%%%%%%%%%%%%%%%%%%%%%%%%

\begin{remark}
As commented in the Introduction 
the operator $T_2\colon L^2\to L^2$ is injective and has proper dense range.
A detailed study of its range is carried out in Sections 3 and 4 of \cite{okada-elliot}. 
Let us highlight a somewhat unexpected result given there. Namely,
for every $-1<a<1$, 
each function $f_a(x):=\chi_{(a,1)}(x)/\sqrt{1-x^2}$, for $x\in(-1,1)$, which belongs to $L^1$,
satisfies $T(f_a)\in L^2$ and
$$
\left\|T(f_a)\right\|_2=
\left\|
T\left(\frac{\chi_{(a,1)}}{\sqrt{1-x^2}}\right)
\right\|_2
=\frac1\pi \big(7\zeta(3)\big)^{1/2},
$$
\cite[Lemma 4.3 and Note 4.4]{okada-elliot}. Observe that $f_a\not\in L^2$ for every 
$-1<a<1$. On the other hand, if $X$ is a  r.i.\ space satisfying 
$1/2<\underline{\alpha}_X\le\overline{\alpha}_X<1$, then $K=\{f_a: -1<a<1\}
\subseteq L^{2,\infty}\subseteq X$. Moreover, for every sequence $a_n\uparrow1^-$
the sequence $\{f_{a_n}\}_{n=1}^\infty$ satisfies $0\le f_{a_n}\downarrow0$ pointwise.
By the absolute continuity of the norm $\|\cdot\|_X$ it follows that $\lim_nT_X(f_{a_n})=0$ in $X$.
\end{remark}

\begin{remark}\label{rem-final} 
For r.i.\ spaces $X$ satisfying the conditions   of Theorem \ref{theo-10}, namely 
\begin{equation}\label{eq-5.6}
0<\underline{\alpha}_X\le \overline{\alpha}_X<1/2\quad \text{or}\quad 
1/2<\underline{\alpha}_X\le \overline{\alpha}_X<1,
\end{equation}
we know that the finite Hilbert transform $T_X\colon X\to X$ cannot be extended
to a larger B.f.s. The proof is based on arguments from  Fredholm operator theory, a deep factorization result
of Talagrand on $L^0$-valued measures and on the construction of the largest domain space $[T,X]$.
For  r.i.\ spaces  $X$ with $0<\underline{\alpha}_X\le \overline{\alpha}_X<1$ not satisfying the 
conditions  \eqref{eq-5.6} it is unknown in general when $T_X$ is Fredholm and when not (for $X=L^2$ it is known that $T_X$ is not Fredholm). So, the arguments used to prove Theorem \ref{theo-10} may apply to some further cases but surely not to all.
The proof given in Theorem \ref{theo-14}  for  $X=L^2$ relies heavily on properties of the $L^2$-setting. 
Thus, it is difficult to extend to other spaces. The possibility of a related proof,  
at least for the spaces $L^{2,q}$ with $1\le q\le \infty$ and $q\not=2$, would 
require carefully looking at  the  ``measure of level sets''. 
Many technical difficulties would be expected to  arise  in such an attempt and still not all cases would be covered. Nevertheless, the class of r.i.\ spaces $X$ having the property \eqref{eq-5.6}, together with $X=L^2$, is rather large and suggests that $[T,X]=X$ should hold for all r.i.\ spaces satisfying $0<\underline{\alpha}_X\le \overline{\alpha}_X<1$.
\end{remark}

%%%%%%%%%%%%%%%%%%%%%%%%%%%%%%%%%%%%%%%

%%%%%%%%%%%%%%%%%%%%%%%%%%%%%%%%%%%%%%%%%%%

\end{document}